\documentclass[11pt,a4paper]{article}
\usepackage{a4}
\usepackage{amsmath}
\usepackage{amsthm}
\usepackage{amssymb}
\usepackage[applemac]{inputenc}
\usepackage{graphicx}
\usepackage{color}
\usepackage{tikz}
\usepackage{enumerate} 
\usepackage{a4wide}
\usepackage{adjustbox}
\usetikzlibrary{matrix}
\topmargin -1.0cm
\oddsidemargin 0in
\evensidemargin 0in
\textwidth 6.3 truein
\textheight 9.1 truein

\def\abs#1{\langle\hspace*{-0.08cm}\cdot
\hspace*{0.02cm} #1\hspace*{0.02cm}\cdot\hspace*{-0.08cm}\rangle}

%%%%%%%%%%%%%%%%%%%%%%%%%%%%%%%%%%%%%%%%%%%%%%%%%%%%%%%%%%%%%%%%%%%%%%%%%%%%%%%%%%%%%%%%%%%%%%%%%%%%%%%%%%%%%%%%%%%%%%%%%%%%%%%%%%%%%%%%%%%%%%%%%%%%%%%%%%%%%%%%%%%%%%%%%%%%%%%%%%%%%%%%%%%%%%%%%%%%%%%%%%%%%%%%%%%%%%%%%%%%%%%%%%%%%%%%%%%%%%%%%%%%%%%%%%%%

\newcommand{\sgn}{\operatorname{sgn}}

\newcommand{\cadlag}{{c\`adl\`ag}}

\def\A{{\cal A}}

\def\B{{\cal B}}
\def\C{{\cal C}}

\def\F{{\cal F}}

\def\G{{\cal G}}

\def\H{{\cal H}}
\def\fH{{\mathfrak H}}

\def\J{{\cal J}}

\def\NN{{\mathbb N}}

\def\P{{\cal P}}
\def\Q{{\mathbb Q}}
\def\R{{\mathbb R}}

\def\X{{\cal X}}

\def\Y{{\cal Y}}
\def\Z{{\cal Z}}

\def\PROB{{\mathbb P}}
\def\PP{{\mathbb P}}
\def\EXP{{\mathbb E}}
\def\EE{{\mathbb E}}

\def\IND{{\mathbb I}}

\def\sw{{\bar{\omega}}}

\def\omg{{\omega}}

\newtheorem{theorem}{Theorem}[section]

\newtheorem{corollary}[theorem]{Corollary}

\newtheorem{lemma}[theorem]{Lemma}

\newtheorem{proposition}[theorem]{Proposition}

\newtheorem{definition}[theorem]{Definition}

\newtheorem{assumption}[theorem]{Assumption}
\newtheorem{remark}[theorem]{Remark}

\numberwithin{equation}{section}
\title{Causal optimal transport and its links to enlargement of filtrations and continuous-time stochastic optimization}

\author{B. Acciaio\thanks{The London School of Economics and Political Science, Dept. Statistics, 10 Houghton St, WC2A 2AE London, UK. Email: b.acciaio@lse.ac.uk} \and J. Backhoff Veraguas\thanks{Vienna University of Technology, Institute of Statistics and Mathematical Methods in Economics (E105-7), Wiedner Hauptstra\ss e 8-10, A-1040 Vienna, Austria. Email: julio.backhoff@tuwien.ac.at
} \and A. Zalashko\thanks{University of Vienna, Faculty of Mathematics, Oskar-Morgenstern-Platz 1, A-1090 Vienna, Austria. Email: anastasiia.zalashko@univie.ac.at. The financial support by the Doktoratskolleg W1245 of the Austrian Science Fund (FWF) and by the Department of Statistics, LSE is gratefully acknowledged.}}

\begin{document}
\maketitle

\begin{abstract}
The martingale part in the semimartingale decomposition of a Brownian motion with respect to an enlargement of its filtration, is an anticipative mapping of the given
Brownian motion. In analogy to optimal transport theory, we define causal transport plans in the context of enlargement of filtrations, as the Kantorovich counterparts of the aforementioned non-adapted mappings. We provide a necessary and sufficient condition for a Brownian motion to remain a semimartingale in an enlarged filtration, in terms of certain minimization problems over sets of causal transport plans. The latter are also used in order to give robust transport-based estimates for the value of having additional information, as well as model sensitivity with respect to the reference measure, for the classical stochastic optimization problems of utility maximization and optimal stopping.  Our results have natural extensions to the case of general multidimensional continuous semimartingales.
\\[1ex]

\noindent{\textbf {Keywords:} Causal transport plan; Semimartingale decomposition; Filtration enlargement;  Stochastic optimization; Value of information; Duality.}
\\[2ex]
\noindent{\textbf 
{MSC2010 subject classifications:} 91G80, 60G44, 90C08.
}
\end{abstract}

\section{Introduction}\label{sec Introduction}
From the seminal works of Monge~\cite{Mo81} and Kantorovich~\cite{Ka42}, the theory of optimal transport has widely developed and established itself as a fervent research area, with growing applications in the most various areas of sciences and engineering. 
Powerful connections have also been established between the theory of optimal transport and stochastic analysis, including among many others, the work by Feyel and {\"U}st{\"u}nel~\cite{FeyelUstunel} extending Brenier's result to Wiener spaces, \cite{BHP13,GHT14} on model-independent finance, and \cite{BCH} on Skorokhod Embedding.
In the recent article by Lassalle~\cite{Lassalle2} the author creates another bridge between optimal transport and stochastic analysis, considering the transport problem under the so called \emph{causality} constraint. The origins of this concept can be found in the work of Yamada and Watanabe \cite{YW}; see also \cite{Jac81,Kurtz2014} for a generalization of the latter. For a discrete-time analogue of transport under causality, see \cite{BBLZ}.

The aim of the present article is to exploit ideas and techniques from optimal transport under causality, in order to revisit the classical stochastic analysis problem of \emph{enlargement of filtrations}. We recall that the central question of enlargements of filtrations is whether the semimartingale property is preserved when passing from a given filtration to a larger one; see \cite{BY78,JY78,Jeu80,Jac85} for some of the earliest works on the subject. 
{ We also stress that, from the point of view of financial applications, considering different filtrations means accommodating agents having access to different sets of information. {This clearly triggers the 
question, of how much having different information matters when facing a particular optimization/decision problem.} By means of causal transport, we will address both the issue of semimartingale preservation, as well as { that of estimating} the value of different (usually additional) information.}
To describe causality, one is first given two Polish filtered probability spaces $(\X,\{\F^\X_t\}_{t=0}^T,\mu)$ and $ (\Y,\{\F^\Y_t\}_{t=0}^T,\nu)$. A transport plan $\pi$ is a probability measure on $\X\times\Y$ having the prescribed marginals $\mu,\nu$; this is denoted by $\pi\in\Pi(\mu,\nu)$.
It is further called \emph{causal} if a certain measurability condition holds, roughly: 
the amount of ``mass'' transported by $\pi$ to a subset of the target space $\Y$ belonging to $\F^\Y_t$, is solely determined by the information contained in $\F_t^\X$. Thus a causal plan transports $\mu$ into $\nu$ in an adapted way. 
Although Lassalle analyzes this constrained transport problem in a general set-up,  his most noteworthy results (e.g.\ connection between relative entropy, weak solutions of stochastic differential equations, and causal transports) are obtained in the setting of $\X=\Y$ being the space of continuous functions and, importantly, both filtrations $\F_t^\X = \F_t^\Y$ being the canonical one; see \cite[Sect. 6]{Lassalle2}. This framework does not allow for anticipation of information, so it is not suitable for the study of enlargement of  filtrations.

Given a cost function $c$ on $\X\times\Y$, the general \emph{causal transport problem} is defined as 
\begin{equation}\label{eq:ctp_intro}
\inf\{\EE^\pi[\, c \, ] : \pi\in\Pi(\mu,\nu),\, \text{$\pi$ causal}\}.
\end{equation} 
The situation of interest for our purposes is when both $\X $ and $ \Y$ are the space of continuous functions, possibly endowed with different filtrations.
Concretely, let $B$ be a Brownian motion on some probability space $(\Omega,\F^B,\PROB)$, where $\F^B$ is the filtration generated by $B$, and let $\H\supseteq\F^B$ be a finer filtration (i.e., $\H$ is an \emph{enlargement} of $\F^B$). If $B$ is still a semimartingale with respect to the larger filtration $\H$, then its unique continuous semimartingale decomposition takes the form
\begin{equation}\label{eq:dec_intro}
dB_t=d \tilde B_t + dA_t,
\end{equation}
where $\tilde B$ is an $\H$-Brownian motion and $A$ is a continuous $\H$-adapted finite variation process. Then the joint law of $(\tilde B, B)$ turns out to be a causal transport plan on path space, when considering the canonical and an appropriate enlarged filtration (see Section~\ref{sect: CFG} for the precise framework). Since $\tilde B$ is an anticipative but deterministic mapping of $B$, much as a Monge map in classical transport (but mapping a target measure to the source one), one can say that causal transport plans { correspond to a} Kantorovich generalization of such anticipative mappings.

The main theoretical result of this article is a characterization of the preservation of the semimartingale property in an enlarged filtration, for a process which is a Brownian motion in the original filtration. A necessary and sufficient condition for this preservation property is given in terms of the causal transport problem \eqref{eq:ctp_intro} on continuous path space, for specific cost functions depending on the difference of the coordinate processes on the product space; see Theorem~\ref{thm: DM_new}.
In addition, when considering transport plans under which this difference is absolutely continuous with respect to Lebesgue measure, we can give necessary and sufficient conditions not only for the semimartingale preservation property to hold, but also to ensure that the finite variation process in \eqref{eq:dec_intro} is absolutely continuous (which yields the so-called information drift); see Theorem~\ref{thm: DM_new_2}. 
When the cost function is of Cameron-Martin type, and the filtration enlargement is done entirely at time zero, the causal transport problem can be interpreted in terms of entropy and mutual information. Thus we are inclined to say that, irrespective of the cost function and the kind of enlargement, the value of our causal transport problems can be seen as a mutual information in a wider sense.
A generalization of the definition of causality allows us to determine necessary and sufficient conditions for a general continuous semimartingale to remain a semimartingale with respect to an enlarged filtration.

Another contribution of the article consists in the analysis of duality for the \emph{primal} problem \eqref{eq:ctp_intro}. Notoriously, duality plays a central role in classical optimal transport. We formulate the causality property via infinitely many linear constraints, which naturally leads to the formulation of a dual problem for \eqref{eq:ctp_intro}.
In order to prove that the values of the primal and dual coincide, we cannot invoke existing results, as these would require imposing restrictive conditions on the problem, and we shall rather take advantage of the specific setting we work in. In the absolutely continuous case described above, we further identify a non-linear dual problem which we can fully solve and relate to the original  problem \eqref{eq:ctp_intro}. This is novel even in the absence of anticipation/enlargements. Interestingly, this gives a different proof of the semimartingale preservation property, and is achieved through optimal transport and convex analysis techniques, without resorting to stochastic analysis arguments; see Theorem~\ref{thm orlicz}.

We finally describe the main application of the present work. The connection between stochastic analysis and causal transport, as developed in this article, allows us to give a novel application in the framework of continuous-time stochastic optimization. For such problems, we derive what we call \emph{robust transport bounds}. Concretely, we show how causal transport provides robust estimates, for a class of stochastic optimization problems, regarding both
\begin{enumerate}
\item[(i)] the value of additional information, and 
\item[(ii)] model sensitivity. 
\end{enumerate} 
Point (i) refers to the difference between the values of a stochastic optimization problem when the optimization is run over a smaller ``original'' filtration, and when it is done over a finer ``enlarged'' one. We establish that for utility maximization (more generally, stochastic control of linear systems) and optimal stopping, this difference is bounded in a robust way by the value of a causal transport problem{; see Proposition~\ref{prop: ut} and Proposition~\ref{prop:OS}(i)}. On the other hand, Point (ii) refers to the difference between the values of a stochastic optimization problem when the optimization is run under two different probabilistic models (i.e.\ reference probability measures). As in the previous case, we establish that such difference is dominated by a causal transport problem in a rather robust fashion{; see Proposition~\ref{prop:OS}(ii)}. 
{ We refer to \cite{PikovKaratz} for original motivation regarding the value of information, and to  \cite{Pflug,PflugPichler} for a discrete-time approach related to ours.}

The article is organized as follows. In Section~\ref{sec Setting} we introduce the main concepts and present some preliminary results. In Section~\ref{sec semimart Causal} we state and prove our main results on the semimartingale preservation property, we state a  non-linear duality result, and provide some links to the literature. Section~\ref{sect:app} contains applications of causal transport to continuous-time stochastic optimization problems. Then Section~\ref{sect:dual} is devoted to an  in-depth study of duality when viewing \eqref{eq:ctp_intro} as a linear program. Finally, in the appendix we collect some technical results, recall some needed tools, and provide some pending proofs.\\

\noindent{\bf Notation.}\ For a Polish space $\Z$, we use $\P(\Z)$ to denote the Borel probability measures on $\Z$, and endow it with the weak convergence of measures.
Given a probability space $(\Omega, \A, \PP)$ and a measurable map $f:\Omega\to\Z$, $f_\#\PP\in\P(\Z)$ denotes the push forward of $\PP$ by $f$. The symbol $\EE^{\PP}$ denotes integration w.r.t.\ $\PP$. By $\B(\Z)$ we denote the Borel $\sigma$-field on $\Z$, and for any $\sigma$-field $\J\subseteq\B(\Z)$ we write $B(\Z,\J)$ (resp.\ $B_b(\Z,\J)$) for the set of all real-valued functions on $\Z$ that are measurable (resp.\ bounded measurable) w.r.t.\ $\J$. Furthermore, given a measure $\eta\in\P(\Z)$, we denote by $\J^\eta$ the completion of $\J$ w.r.t.\ $\eta$; the unique extension of $\eta$ to $\B(\Z)^\eta$ is still denoted by $\eta$. With $\C(\Z)$ (resp.\ $\C_b(\Z)$) we mean the set of all continuous (resp.\ bounded continuous) real-valued functions defined on $\Z$.

\section{Setting and preliminary results}\label{sec Setting}

\subsection{Classical and causal transport.}\label{sec sub abstract}
Let $(\X,\B(\X),\mu)$ and $(\Y,\B(\Y),\nu)$ be two Polish probability spaces. We denote by $\Pi(\mu,\nu)$ the subset of elements in $\P(\X \times \Y)$ having marginals $\mu$ and $\nu$. The classical optimal transport problem consists in minimizing the cost of transporting the (source) measure $\mu$ to the (target) measure $\nu$, with respect to a given cost function $c:\X\times\Y\to\R\cup\{+\infty\}$. The transportation is represented mathematically by a measure $\pi\in\Pi(\mu,\nu)$, referred to as ``transport plan between $\mu$ and $\nu$,'' so the minimization problem is formulated as
\begin{equation}\label{eq classical transport}
\inf\{\EXP^{\pi} [\, c \, ]\ :\ \pi\in\Pi(\mu,\nu)\}.
\end{equation}
This kind of problems have a rich theory, particularly concerning optimality conditions and duality. The latter means the equivalence between \eqref{eq classical transport} and the following maximization problem
\[\textstyle
\sup \left\{\int \phi d\mu+\int \psi d\nu : \phi\in\C_b(\X),\,\psi\in\C_b(\Y),  \,\phi \oplus\psi\leq c\right\},
\]
where here and throughout the article we let $\phi \oplus\psi\leq c$ stand for ``$\phi(x) +\psi(y)\leq c(x,y)\ \forall x,y$''. We do not give an exhaustive list of references on the matter, but rather recommend \cite{Villani,KellererDuality} for a sample of results in this direction, going from the cost $c$ being lower semicontinuous, to being finite and Borel measurable (and beyond). 
For our purposes, we will first need to slightly extend some of the well-known results in classical transport to our particular setting; see Section~\ref{sect:dualcl}.

We now proceed to introduce the specific class of transport plans we shall consider in this work. To this end, we fix a finite time horizon $[0,T]$, and endow the Polish spaces $\X$ and $\Y$ with right-continuous filtrations $\F^\X=(\F^\X_t)_{ t \in [0,T]}$ and $\F^\Y=(\F^\Y_t)_{ t \in [0,T]}$, with $\F^\X_T=\mathcal{B}(\X)$ and $\F^\Y_T=\mathcal{B}(\Y)$. 
As a rule, $x,y$ will denote generic elements of $\X,\Y$ respectively. 
In the following, we denote by $\F^{\X,\mu}$ the usual $\mu$-completed filtration containing $\F^\X$, with analogous notation throughout the article.

\begin{definition}[Causal transport plan]\label{def:causality}
A transport plan $\pi \in \Pi (\mu, \nu)$ is called \emph{causal} between $(\X,\F^\X,\mu)$ and $(\Y,\F^\Y,\nu)$ if, for any $t\in [0,T]$ and any set $A \in \F^\Y_t$, the map 
\[ \X\ni x \mapsto  \pi^x(A)=\EXP^{\pi}[\IND_A|\F_T^\X](x)\]
is measurable with respect to $\F^{\X,\mu}_t$, where
$\pi^x(dy):=\pi(\{x\}\times dy)$ is { a regular conditional kernel} of $\pi$ w.r.t.\ the first coordinate. Denote the set of such plans by $\Pi^{\F^\X, \F^\Y} (\mu, \nu)$.
\end{definition}
This concept goes back to the so-called Yamada-Watanabe criterion (see \cite{YW}) and has been recently popularized by \cite{Lassalle2}. We stress that the last author actually uses a weaker definition but gives sufficient conditions for its equivalence with the one we give. These are however too restrictive for the purpose of the present work.
In the following remark we collect some useful equivalent characterizations of causality.
We recall that, given two filtrations $\A^1\subseteq\A^2$, the so called $H$-hypothesis holds if every square integrable $\A^1$-martingale is a square integrable $\A^2$-martingale.

\begin{remark}
The set $\Pi^{\F^\X, \F^\Y} (\mu, \nu)$ is never empty, since it contains the product measure.
\end{remark}

\begin{remark}\label{rem: causal}
For a probability measure $\pi \in  \Pi (\mu, \nu)$, the following are equivalent:\\[-0.6cm]
\begin{enumerate}
\item $\pi$ is a causal transport plan w.r.t.\ $\F^\X$ and $\F^\Y$;
\item $\pi\left(\X\times D_t|\F^\X_t\otimes\{\emptyset,\Y\}\right)=\pi\left(\X\times D_t|\F^\X_T\otimes\{\emptyset,\Y\}\right)$\ $\pi$-a.s., for all $t\in[0,T]$, $D_t\in\F_t^\Y$;
\item the $\sigma$-fields $\{\emptyset,\X\}\otimes\F^\Y_t$ and $\F^\X_T\otimes\{\emptyset,\Y\}$ are conditionally independent with respect to $\pi$ given $\F^\X_t\otimes\{\emptyset,\Y\}$, for all $t\in[0,T]$;
\item the $H$-hypothesis holds between {$\F^{\X,\mu}\otimes\{\emptyset,\Y\}$ and $\F^{\X,\mu}\otimes\F^\Y$ }with respect to $\pi$.
\end{enumerate}
The equivalences above can be shown as in \cite[Theorem~3]{BY78}. For convenience of the reader we just stress the reason why causality/conditional independence implies the $H$-hypothesis: if $M$ is a $(\pi,\F^{\X,\mu}\otimes\{\emptyset,\Y\})$-martingale, then $\EE^{\pi}[M_{t+s}|\F^{\X,\mu}_t\otimes\F^\Y_t]=\EE^{\pi}[M_{t+s}|\F^{\X,\mu}_t\otimes\{\emptyset,\Y\}]=M_t$, hence $M$ is a $(\pi,\F^{\X,\mu}\otimes\F^\Y)$-martingale.
\end{remark}

In analogy with \eqref{eq classical transport}, and as in \cite{Lassalle2}, we define the \emph{causal transport problem}:
\begin{equation}\label{eq causal transport}\textstyle
\inf\{\EXP^{\pi} [\, c \, ]\ :\ \pi\in\Pi^{\F^\X,\F^\Y}(\mu,\nu)\},
\end{equation}
which constitutes the core of our work. 
In Section~\ref{sect:dual} we will prove a duality result for this problem, and in Sections~\ref{sec semimart Causal} and \ref{sect:app} we will show how the causal transport problem for specific cost functions allows to characterize semimartingale preservation under filtration enlargement, and to estimate the value of additional information for some stochastic optimization problems.

\subsection{Path space and filtration enlargement.}\label{sect: CFG}
We will consider causal plans that transport measures defined on spaces of continuous functions. 
For $t\in(0,T]$, we denote by $\C [0,t]$ the set of continuous functions $f:[0,t]\to\R$ such that $f(0)=0$, and we let $\C =\C [0,T]$. Let $W=(W_t)_{t\in[0,T]}$ be the coordinate process on $\C$, $W_t(\omega)=\omega_t$ for $\omega\in\C$, and let ${ \F=}(\F_t)_{t\in[0,T]}$ the right-continuous version of the filtration generated by $W$:
\[
\F_t:=\bigcap\limits_{u>t}\sigma(W_s, s\leq u).
\]
In order to consider {all possible kinds of \emph{anticipation of information} regarding the evolution of the coordinate process, we study right-continuous filtrations $\G=(\G_t)_{t\in[0,T]}$ such that
\begin{equation}\label{eq: f_enl}
\G_t\supseteq\F_t\; \textrm{for all $t\in[0,T)$},\;\;\, \textrm{and\;\, $\G_T=\F_T$}. 
\end{equation}}
It is worth mentioning the two most studied kinds of filtration enlargements, which are particular cases of \eqref{eq: f_enl}:\\[-0.6cm]
\begin{itemize}
\item \emph{initial enlargement} with a ($\F_T$-measurable) random variable, say $L$ (so { $\G_t=\F_t\vee\sigma(L)$}, $t\in[0,T]$);
\item \emph{progressive enlargement with a random time} (non-negative $\F_T$-measurable random variable), say $\tau$ (so {$(\G_t)$ is the right-continuous version of $(\G^0_t)$, where $\G^0_t:=\F_t\vee\sigma(\tau\wedge t)$}, $t\in[0,T]$), in which case $\G$ turns $\tau$ into a stopping time.
\end{itemize}
In fact, not many works are devoted to the study of general enlargements of filtration beyond these two cases, as considered in the present article, see e.g. \cite{ADIGirsanov}, { \cite{Jeu80}} and \cite{KchiaProtter}. 
We refer the reader to the monographs \cite{Jeu80}, \cite{MY06}, \cite[Sec.~5.9]{JYC09}, \cite[Ch.~VI]{Pro04} for an account of the main results and the literature on filtration enlargements.

In what follows we will consider $\X=\Y=\C$ and, given two measures $\mu,\nu$ on $\C$, we will study causal transport plans between $(\C,\F,\mu)$ and $(\C,\G,\nu)$. We shall commonly denote by $(\omega,\sw)$ generic elements in $\C\times\C$. 
Often, as source measure $\mu$, we will take the Wiener measure on $\C$, which we denote by $\gamma$.
For a continuous process { $Z=(Z_t)_{t\in[0,T]}$ defined on a given space $\Omega$}, we denote by $\F^Z:=Z^{-1}(\F)$ the right-continuous version of the filtration generated by $Z$ on $\Omega$, and by { $\G^{Z}:=Z^{-1}(\G)$ the filtration containing anticipation of information regarding the evolution of $Z$}.
Working with the path space $\C$ eases the exposition of our analysis. We point out, however, that our results have a natural extension in the multidimensional setting $\C^d$, i.e.\ for multidimensional continuous processes, see Remarks~\ref{rem multi}, \ref{rem multi abs}.

{ For the rest of this section, we work on a fixed probability space $(\Omega,\J,\PP)$. In particular, for any process $Z=(Z_t)_{t\in[0,T]}$ defined on it, it is implicitly understood that $\F^Z_T\subseteq\J$.}

\begin{definition}[Causal coupling]\label{def: CC}
A pair $(X,Y)$ of continuous processes on {  $(\Omega,\J,\PP)$}, is called a \emph{causal coupling} w.r.t.\ the filtrations $\F^X$ and {$\G^{Y}$}\ if $(X,Y)_\#\PROB$ is a causal transport plan between $(\C,\F,X_\#\PROB)$ and $(\C,\G,Y_\#\PROB)$.
\end{definition}

\begin{remark}\label{rem: CC}
Definition~\ref{def:causality} and Remark~\ref{rem: causal} imply that $(X,Y)$ is a causal coupling if and only if for all $t$, {$\G^{Y}_t$} and $\F^X_T$ are conditionally independent given $\F^X_t$, that is, the $H$-hypothesis holds between $\F^X$ and $\F^{X}\vee{ \G^{Y}}$, with respect to $\PROB$.
\end{remark}
The following result shows that there is an easier way to check causality in a Brownian setting; it extends { the result} \cite[Proposition 4]{Lassalle2} to our setting with enlargements.
\begin{lemma}\label{lem: causal wiener equivalence}
Let $X$ be a Brownian motion in its right-continuous natural filtration, and $Y$ be a continuous process. Then the following are equivalent:
\begin{enumerate}
\item $(X,Y)$ is a causal coupling w.r.t.\ $\F^X$ and ${ \G^{Y}}$;
\item $X$ is a Brownian motion in $\F^{X}\vee{ \G^{Y}}$;
\item there is a filtration $\A$ on $\Omega$ s.t.\ $X$ is a $\A$-Brownian motion and {${ \G^{Y}}\subseteq\A$}.
\end{enumerate}
\end{lemma}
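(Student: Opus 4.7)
The strategy is to prove the cycle by establishing $(2)\Leftrightarrow(3)$ and then $(1)\Leftrightarrow(2)$, invoking Remark~\ref{rem: CC} to pass between causality and the $H$-hypothesis, and L\'evy's characterization to pass between the $H$-hypothesis and the Brownian motion property in the enlarged filtration.

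The equivalence $(2)\Leftrightarrow(3)$ is the easy part. The implication $(2)\Rightarrow(3)$ is immediate upon choosing $\A:=\F^X\vee\G^Y$. For $(3)\Rightarrow(2)$, I would observe that $X$ being $\A$-adapted forces $\F^X\subseteq\A$, so that together with $\G^Y\subseteq\A$ we get $\F^X\vee\G^Y\subseteq\A$. The tower property of conditional expectations then shows that $X$ is an $\F^X\vee\G^Y$-martingale; since $X$ remains continuous and its quadratic variation $\langle X\rangle_t=t$ is pathwise and hence unchanged, L\'evy's characterization yields that $X$ is an $\F^X\vee\G^Y$-Brownian motion.

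For $(1)\Rightarrow(2)$, Remark~\ref{rem: CC} says that causality is equivalent to the $H$-hypothesis between $\F^X$ and $\F^X\vee\G^Y$. Applying it to the square-integrable $\F^X$-martingales $X$ and $X^2-t$, both remain martingales in the enlarged filtration, so L\'evy's characterization again yields $(2)$. For the reverse direction $(2)\Rightarrow(1)$, which I expect to be the main step, I would verify the $H$-hypothesis directly from the independence of Brownian increments. Concretely, any bounded $\F^X_T$-measurable $F$ can be written as $F=\tilde F\bigl(X_{\cdot\wedge t},(X_s-X_t)_{s\ge t}\bigr)$ for a suitable measurable $\tilde F$; since $X$ is a Brownian motion in $\F^X\vee\G^Y$ under $(2)$, the increments $(X_s-X_t)_{s\ge t}$ are independent of $\F^X_t\vee\G^Y_t$, while $X_{\cdot\wedge t}$ is $\F^X_t$-measurable. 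Hence $\EE[F\mid\F^X_t\vee\G^Y_t]$ can be computed by integrating out the increments against their law, yielding the same functional of $X_{\cdot\wedge t}$ as $\EE[F\mid\F^X_t]$. This is exactly the conditional independence formulation of the $H$-hypothesis and so, by Remark~\ref{rem: CC}, gives causality. The main obstacle is the clean measure-theoretic justification of the conditioning-against-independent-increments step on path space, which is standard but requires some care with the measurability of the kernel $\tilde F$; an alternative would be to invoke the martingale representation theorem in the Brownian filtration $\F^X$ to write every bounded $\F^X$-martingale as $\int h\,dX$ and note that the stochastic integral is invariant under the enlargement since $X$ stays Brownian.
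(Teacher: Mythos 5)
Your proposal is correct and rests on the same ingredients as the paper's proof (L\'evy's characterization, the conditional-independence reformulations of causality from Remark~\ref{rem: causal} and Remark~\ref{rem: CC}, and independence of Brownian increments in an enlarged filtration), only with the implications arranged differently: the paper proves the cycle $1\Rightarrow2\Rightarrow3\Rightarrow1$, whereas you prove $1\Leftrightarrow2$ and $2\Leftrightarrow3$. Your $1\Rightarrow2$ via the $H$-hypothesis applied to the square-integrable $\F^X$-martingales $X$ and $X^2-t$ is a clean repackaging of the paper's direct computation $\EE[X_tf_s]=\EE[X_sf_s]$, and your $2\Rightarrow1$ is essentially the paper's $3\Rightarrow1$ argument specialized to $\A=\F^{X}\vee\G^{Y}$; the measurability/conditioning step you worry about is a standard monotone-class argument, no heavier than what the paper leaves implicit. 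One small point in your $3\Rightarrow2$: since $\F^X$ is by convention the right-continuous version of the natural filtration of $X$, adaptedness of $X$ to $\A$ only gives $\F^X_t\subseteq\A_{t+}$, not $\F^X_t\subseteq\A_t$, when $\A$ is not assumed right-continuous; this is harmless because a continuous $\A$-Brownian motion is also a Brownian motion for the right-continuous filtration $(\A_{t+})_t$ (or one can bypass the issue entirely, as the paper does, by deducing causality from $3$ directly through independence of increments), but your tower-property step should be run with respect to $\A_{+}$ rather than $\A$.
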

{ From the proof of the lemma it will become clear that if $X$ is any process with increments independent  with respect to ${ \G^{Y}}\vee\F^X$, then $(X,Y)$ is causal.}
\begin{proof}
1$\Rightarrow$2:\ { By L\'evy theorem, it is enough} to show that $X$ is a $\F^{X}\vee{ \G^{Y}}$-martingale. For $0\leq s<t\leq T$ and $f_s\in L^\infty(\F^{X}_s\vee{ \G^{Y}_s})$, we have
\begin{eqnarray*}
\EE\left[X_tf_s\right]&=&\EE\left[\EE\left[X_tf_s \mid(X_u)_{u\leq t}\right]\right]=\EE\left[X_t\EE\left[f_s\mid(X_u)_{u\leq t}\right]\right]=\EE\left[X_t\EE\left[f_s\mid(X_u)_{u\leq s}\right]\right]\\
&=&\EE\left[\EE\left[X_t\mid(X_u)_{u\leq s}\right]\EE\left[f_s\mid(X_u)_{u\leq s}\right]\right]
=\EE\left[X_s\EE\left[f_s\mid(X_u)_{u\leq s}\right]\right]=\EE\left[X_sf_s\right],
\end{eqnarray*}
where causality is used in the third equality.\\ 
2$\Rightarrow$3\ follows by taking $\A:=\F^{X}\vee{\G^{Y}}$.\\ 
3$\Rightarrow$1:\ For $t\in[0,T]$ and $D_t\in { \G^{Y}_t}$,
\[\PROB(D_t\ |\ (X_s)_{s\leq T})=\PROB(D_t\ |\ (X_s)_{s\leq t},\, \{X_{t+u}-X_t, u\in[0,T-t]\})=\PROB(D_t\ |(X_s)_{s\leq t}),
\]
since $X$ is a $\F^{X}\vee{ \G^{Y}}$-Brownian motion, hence its increments $X_{t+s}-X_{t}$ are jointly independent of $(X_s)_{s\leq t}$ and of the event $D_t\in { \G^{Y}_t}$. This shows that $(X,Y)$ is causal, by Definition~\ref{def: CC} and Remark~\ref{rem: causal}-2.
\end{proof}

Throughout the article we talk of \emph{(continuous) semimartingale decomposition}, referring to the unique decomposition of a continuous semimartingale into a continuous local martingale and a continuous finite variation process.
The notion of causality can be used to study  semimartingale decompositions in the setting of enlargement of filtrations, and this is the object of study of Section~\ref{sec semimart Causal}. We start by illustrating in Section~\ref{ex:BB} the lemma above, and show a first connection between  decomposition of semimartingales and causality. A necessary and sufficient condition for a Brownian motion to remain a semimartingale in the enlarged filtration is given in Theorem~\ref{thm: DM_new}.

\subsubsection{Some examples of enlargement of Brownian filtration}\label{ex:BB}
In this section we collect some well-known examples of filtration enlargements in a Brownian setting, which will be useful for future reference (see e.g. \cite{MY06} for these and many other examples).
{Let $B$ be a Brownian motion in its right-continuous natural filtration.} If $B$ remains a semimartingale with respect to the enlarged filtration ${ \G^{B}}$, then its unique continuous semimartingale decomposition takes the form
\[
dB_t=d \tilde B_t + dA_t,
\]
where $\tilde B$ is a ${ \G^{B}}$-Brownian motion and $A$ is a continuous ${ \G^{B}}$-adapted finite variation process. In particular, for every finite horizon $T>0$, by Lemma~\ref{lem: causal wiener equivalence}, we have that $(\tilde B,B)$ is a causal transport plan between $\F^{\tilde B}$ and ${ \G^{B}}$, that is, $(\tilde B,B)_\#\PROB\in\Pi^{\F,\G}(\gamma,\gamma)$.

In what follows we recall specific enlargements of the filtration $\F^B$, which will be referred to later on in the article.

$(1)$\ \emph{Initial enlargement with countably many atoms (see \cite{Yor85})}:\ initial enlargement with a discrete { $\F^B_T$-measurable} random variable, say $L$ (namely {$\G^B_t=\F^B_t\vee\sigma(L)$} for all $t\in[0,T]$), that takes values { $l_n$, $n\in\NN$. This corresponds to enlarging the filtration at time zero with the sets $C_n=\{L=l_n\}, n\in\NN$. In this case the decomposition of $B$ in the enlarged filtration takes the form
\begin{equation}\label{eq:atoms}\textstyle
dB_t = d\tilde{B}_t + \sum\limits_n\IND_{C_n}\frac{\eta^n_t}{M^n_t}dt,
\end{equation}
where $M^n_t=\PP(C_n|\F^B_t)$, which by martingale representation can be written as $M^n_t=M^n_0+\int_0^t\eta^n_sdB_s$, with $M^n_0=\PP(C_n)$ and for some predictable process $\eta^n$.}

$(2)$\ \emph{Brownian bridge}:\ initial enlargement by the value of the Brownian motion at the terminal time $T$, namely {$\G^B_t=\F^B_t\vee\sigma(B_T)$} for all $t\in[0,T]$. In this case it is well-known that the decomposition of $B$ in the enlarged filtration ${\G^B} = \F^B \vee \sigma(B_T)$ is 
\begin{equation}\label{eq:Brownian bridge}\textstyle
dB_t = d\tilde{B}_t + \frac{B_T - B_t}{ T-t}dt.
\end{equation}

$(3)$\ \emph{Progressive enlargement with last hitting time} (see \cite[Section 12.2.4]{Yor97}):\
progressive enlargement with the random time $\tau = \sup\left\lbrace  u \leq T, B_u = 0\right\rbrace $. In this case, using the notation $\textstyle\Phi(x)=\sqrt{{2}/{\pi}}\int_x^\infty e^{-u^2/2}du$, the decomposition of $B$ in the enlarged filtration is the following:
\begin{equation}\label{eq:progressive}\textstyle
dB_t = d\tilde{B}_t - \sqrt{\frac{2}{\pi}}\frac{\exp\left({-B_t^2}/{2(T-t)}\right) }{\sqrt{T-t}}\left(\frac{\sgn(B_t)}{\Phi(|B_t|/\sqrt{T-t})}\IND_{\left\lbrace t \leq \tau \right\rbrace } - \frac{\sgn(B_T)}{1-\Phi(|B_t|/\sqrt{T-t})}\IND_{\left\lbrace  \tau \leq t \leq T \right\rbrace }\right)dt.
\end{equation}

$(4)$\ \emph{Bessel process} {(see \cite[Sect. 6.3]{Jeu80})}:\ define a $3$-dimensional Bessel process by $dR_t=\frac{1}{R_t}dt+dB_t$, and denote $J_t:=\inf_{s\geq t}R_s$. Then the process $\tilde B_t:=R_t-2J_t$ is a Brownian motion in the filtration obtained by enlarging $\F^B$ with the process $J$, and
\[\textstyle
dB_t=d\tilde B_t+(2dJ_t-\frac{1}{R_t}dt).
\]
Note that, contrary to the previous cases, here the finite variation process in the semimartingale decomposition of $B$ in the enlarged filtration is not absolutely continuous with respect to Lebesgue measure.

\section{Causal optimal transport and semimartingale decomposition}\label{sec semimart Causal}

Throughout this whole section we consider the continuous path space framework of Section~\ref{sect: CFG}. In particular, we consider the filtrations $\F$ and $\G$ defined there. The following notation will be frequently used: for a process/path $Z$ we denote by $$\textstyle V_t(Z)\,\, = \sup_{0\leq t_1\leq\dots\leq t_k\leq t}\,\,\sum_{i<k} |Z_{t_i}-Z_{t_{i+1}}|$$  the variation of $Z$ up to time $t$. {In Remark~\ref{rem multi} we explain how the next results and arguments are trivially extended to the case of multidimensional continuous paths, see also Remark~\ref{rem multi abs}.}

We show the connection between the semimartingale decompositions arising in enlargement of filtrations, and certain causal optimal transport problems. We consider both the case when the finite variation part in the semimartingale decomposition is absolutely continuous and when it is not; we do it separately for the sake of applications and connection with the literature. All results are shown in a one-dimensional Brownian setting, for simplicity of exposition. {We note, however, that a weaker definition of causality leads to analogous results for general continuous semimartingales, see Remark~\ref{rem semi}.}
Recall that $\gamma$ is the Wiener measure on the path space $\C$, and that $(\omega,\sw)$ denotes a generic element in $\C\times\C$. By $id$ we mean the identity mapping on $\C$.

\subsection{The general case}
We first need to obtain the following result, reminiscent of \cite[Proposition 6]{Lassalle2}:

\begin{theorem}\label{thm: gWnu_gen}
Let $\nu$ be a measure on $\C$ such that $\nu\ll\gamma$. 
Then the following are equivalent:
\begin{itemize}
\item[(i)] for some continuous, $\G^{\nu}$-adapted, integrable variation process $A=A(\sw)$, i.e.\ such that 
\begin{eqnarray}\label{eq: FV}
\EXP^\nu[ V_T(A)]< \infty,
\end{eqnarray}
the process $\xi_t(\sw) :=\sw_t-A_t(\sw)$ is a $(\nu,\G^{\nu})$-Brownian motion;
\item[(ii)] the following causal optimal transport problem is finite: 
\begin{equation}\label{eq: finiteness_gen}
\inf\limits_{\pi\in\Pi^{\F,\G }(\gamma, \nu)}\EXP^\pi[
V_T(\sw-\omega)].
\end{equation}
\end{itemize}
Moreover, whenever (i)-(ii) hold, we have that:\\
1.\ the transport plan $\hat{\pi}:= (\xi,id)_{\#}\nu $ belongs to $\Pi^{\F ,\G }(\gamma, \nu)$, and is optimal for \eqref{eq: finiteness_gen};\\
2.\ for every transport plan $\pi\in\Pi^{\F ,\G }(\gamma, \nu)$ with finite cost in \eqref{eq: finiteness_gen}, the process $\tilde A(\omega,\sw):=A(\sw)$ in (i) is the $(\pi,\{\{\emptyset,\C\}\otimes \G\}^{\pi})$-dual predictable projection of the process $(\omega,\sw)\mapsto \Lambda(\omega,\sw):=\sw-\omega$.
\end{theorem}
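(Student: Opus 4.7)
The whole argument rests on the reformulation of causality given by Lemma~\ref{lem: causal wiener equivalence}: for $\pi\in\Pi^{\F,\G}(\gamma,\nu)$, causality is equivalent to the first coordinate $\omega$ being a Brownian motion on $(\C\times\C,\pi)$ with respect to $\F\otimes\G$. Hence, as soon as $\EE^\pi[V_T(\sw-\omega)]<\infty$, the identity $\sw=\omega+(\sw-\omega)$ is already a $(\pi,\F\otimes\G)$-semimartingale decomposition of $\sw$, with martingale part $\omega$ and integrable-variation finite-variation part $\Lambda:=\sw-\omega$. My plan is to project this decomposition onto the smaller filtration $\{\emptyset,\C\}\otimes\G$, to which $\sw$ is adapted, and transport the result back to $(\C,\nu,\G^\nu)$.

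For the easy direction (i)$\Rightarrow$(ii), I would set $\hat\pi:=(\xi,id)_\#\nu$. Since $\xi$ is a $(\nu,\G^\nu)$-Brownian motion its law is $\gamma$, so $\hat\pi\in\Pi(\gamma,\nu)$. Being $\G^\nu$-adapted we have $\F^\xi\vee\G^\nu=\G^\nu$, so Lemma~\ref{lem: causal wiener equivalence} yields that $(\xi,id)$ is a causal coupling. The cost equals $\EE^\nu[V_T(id-\xi)]=\EE^\nu[V_T(A)]<\infty$ by (i), which proves (ii) and supplies a candidate optimizer.

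For (ii)$\Rightarrow$(i), I fix $\pi\in\Pi^{\F,\G}(\gamma,\nu)$ with finite cost. By the initial observation, $\sw$ is a continuous $(\pi,\F\otimes\G)$-semimartingale of integrable variation, and since it is $\{\emptyset,\C\}\otimes\G$-adapted, Stricker's theorem makes it a $(\pi,\{\emptyset,\C\}\otimes\G)$-semimartingale. Uniqueness of the canonical decomposition for continuous semimartingales then produces a continuous decomposition $\sw=\xi+A$, with $\xi$ a continuous local martingale and $A$ a continuous predictable finite-variation process, both necessarily functions of the second coordinate alone; I write $A=A(\sw)$, $\xi=\xi(\sw)$. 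Since $\nu\ll\gamma$, $[\sw]_t=t$ and hence $[\xi]_t=t$, so Lévy's characterization identifies $\xi$ as a $\{\emptyset,\C\}\otimes\G$-Brownian motion. Transporting to $(\C,\nu,\G^\nu)$ gives (i), modulo the integrable variation of $A$.

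For the remaining points, I identify $A$ with the dual predictable projection of $\Lambda$ w.r.t.\ $\{\emptyset,\C\}\otimes\G$ under $\pi$: combining $\sw=\omega+\Lambda=\xi+A$ gives $\Lambda-A=\xi-\omega$, and testing against bounded $\{\emptyset,\C\}\otimes\G$-predictable processes (using that $\omega$ is a martingale in the larger filtration $\F\otimes\G$) verifies the defining identity of ${}^p\Lambda$, which gives statement 2. The standard inequality $\EE[V_T({}^pX)]\leq\EE[V_T(X)]$, applied via the Jordan decomposition of $\Lambda$, yields $\EE^\nu[V_T(A)]=\EE^\pi[V_T(A)]\leq\EE^\pi[V_T(\Lambda)]<\infty$, completing (i). Since $A$ as a process on $\C$ depends on $\pi$ only through the marginal $\nu$, this inequality combined with $\EE^{\hat\pi}[V_T(\sw-\omega)]=\EE^\nu[V_T(A)]$ reads $\EE^\pi[V_T(\sw-\omega)]\geq\EE^{\hat\pi}[V_T(\sw-\omega)]$ for every causal $\pi$, proving statement 1. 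The most delicate point throughout is ensuring that the canonical decomposition after Stricker's reduction is continuous and of integrable variation; this is what lets Lévy's theorem and the dual-projection identification close the argument.
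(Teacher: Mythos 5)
Your proposal is correct, but the main direction (ii)$\Rightarrow$(i) follows a genuinely different route from the paper. You descend the semimartingale property of $\sw$ from the large filtration $\F\otimes\G$ (where causality, via Lemma~\ref{lem: causal wiener equivalence}, makes $\omega$ a Brownian motion, so $\sw=\omega+\Lambda$ is already a decomposition) to the sub-filtration $\{\emptyset,\C\}\otimes\G$ by Stricker's theorem, invoke the canonical decomposition of a continuous semimartingale to get $\sw=\xi+A$ with both parts continuous, apply L\'evy (using $[\sw]_t=t$ from $\nu\ll\gamma$, the same fact the paper uses), and only afterwards identify $A$ with the dual predictable projection of $\Lambda$ to obtain integrable variation and statements 1--2. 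The paper proceeds in the opposite order: it \emph{constructs} $A$ as the dual predictable projection of $\Lambda$ (so $\EXP^\nu[V_T(A)]\leq\EXP^\pi[V_T(\Lambda)]$ comes for free), proves continuity of $A$ by a predictable-jump-time argument, and then verifies by direct computation against $f_s\in L^\infty(\G_s^\nu)$ that $\xi=\sw-A$ is a martingale, concluding with L\'evy and Girsanov. Your route buys a shorter construction by outsourcing existence and continuity of the decomposition to general theory, but it must then recover the quantitative variation bound, and here the order of your last two steps needs care: you cannot invoke the inequality $\EXP[V_T({}^p\Lambda)]\leq\EXP[V_T(\Lambda)]$ for $A$ before knowing $A={}^p\Lambda$, and the defining identity of the dual projection only pins down $A$ among processes of \emph{integrable} variation, which is exactly what you are trying to prove. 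This is repairable within your scheme: from $\int X\,dA=\int X\,d\Lambda-\int X\,d(\xi-\omega)$ (the right-hand side being integrable, since $X$ bounded predictable makes both stochastic integrals square-integrable martingales once $[\xi]_t=t$ is known) you get the testing identity with well-defined expectations, and choosing $X_t=H_t\IND_{\{t\leq T_n\}}$ with $H$ a predictable sign density of $dA$ and $T_n=\inf\{t:V_t(A)\geq n\}$, monotone convergence yields $\EXP^\pi[V_T(A)]\leq\EXP^\pi[V_T(\Lambda)]<\infty$, after which uniqueness of the dual predictable projection gives $A={}^p\Lambda$ and your statements 1--2 follow as you indicate. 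With this localization inserted, your argument is complete and equivalent in strength to the paper's.
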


Note that Lemma~\ref{lem:cpt} and Theorem~\ref{prop duality} apply, since $\gamma$ satisfies the weak continuity property~\eqref{eq:weak_cont}, and the total variation of the difference of the coordinate processes is bounded from below and lower semicontinuous w.r.t.\ supremum norm. Therefore, $\Pi^{\F,\G }(\gamma, \nu)$ is weakly compact, the problem in \eqref{eq: finiteness_gen} is attained, and duality holds. In the ensuing proof, attainability is also established but via stochastic analysis arguments.

\begin{proof}\leavevmode
$(i)\ \Rightarrow (ii)$:\ From the process $\xi$ in $(i)$, define the coupling $\hat{\pi}:= (\xi,id)_{\#}\nu $. The fact that $\hat{\pi}\in\Pi^{\F ,\G }(\gamma, \nu)$ follows as in the proof of Lemma~\ref{lem: causal wiener equivalence}, and $\EE^{\hat \pi}[
V_T(\sw-\omega)]=\EE^\nu[V_T(A)(\sw)]< \infty$.

$(ii)\ \Rightarrow (i)$:\ 
Fix $\pi\in\Pi^{\F ,\G }(\gamma, \nu)$ with $\EE^\pi[V_T(\sw-\omega)]<\infty$. The continuous process $\Lambda_t(\omega,\sw):=\sw_t-\omega_t$ is of integrable variation with respect to $\pi$, hence we can define its dual predictable projection $\tilde A(\omega,\sw)$ with respect to $\{\{\emptyset,\C\}\otimes \G\}^{\pi}$, the $\pi$-completion of $\{\emptyset,\C\}\otimes \G$. 
In particular, $\tilde A$ is an integrable variation process on $\C\times\C$. By Lemma~\ref{lem proj}, we may assume that $\tilde{A}$ does not depend on the first coordinate, thus it corresponds to a $\G^{\nu}$-predictable integrable variation process $A$ on $\C$, in the sense that $\tilde A(\omega,\sw)=A(\sw)$, which gives \eqref{eq: FV}. Altogether we have
\begin{eqnarray}\label{eq: dpp}\textstyle
\EE^\pi\left[\int_0^T X_t(\sw)d\Lambda_t(\omega,\sw)\right]=\EE^\nu\left[\int_0^T X_t(\sw)d A_t(\sw)\right],
\end{eqnarray}
for every $\G^{\nu}$-predictable bounded process $X$. 
Moreover, we have
\begin{equation}\label{eq var diminishes}
\EXP^\nu[ V_T(A)]=
\EXP^\pi[ V_T(\tilde A)]\leq
\EXP^\pi[ V_T(\Lambda)]<\infty. 
\end{equation}
Now, note that the jump times of $A$ are $\G^{\nu}$-predictable, by \cite[Theorem~B, page~xiii]{DellacherieMeyerB}, and that for each jump time $\tau$, $\Delta A_{\tau}=\EXP^{\pi}[\Delta \Lambda_{\tau}| \{\emptyset,\C\}\otimes \G^{\nu}_{\tau-}]=0$ a.s., from the continuity of $\Lambda$; see \cite[Theorem~VI.76]{DellacherieMeyerB}. Therefore, $A$ is continuous.

We now define the process $\xi$ as in $(i)$, and need to show that it is a $(\nu,\G^{\nu})$-Brownian motion.
For $0\leq s<t\leq T$ and $f_s\in L^\infty(\G_s^{\nu})$, we have
\begin{align*}
\EXP^\nu [(\xi_t(\sw)-\xi_s(\sw))f_s(\sw)] &\textstyle  = \EXP^\nu [(\sw_t - \sw_s - \int_{s}^{t} dA_u(\sw) )f_s(\sw)] \\
&\textstyle = \EXP^{\pi} [(\omega_t - \omega_s )f_s(\sw)] + \EXP^{\pi} [( \int_{s}^{t}  d\Lambda_u -  \int_{s}^{t}dA_u(\sw) )f_s(\sw)]\\
& \textstyle= \EXP^{\pi} [( \int_{s}^{t} d\Lambda_u -\int_{s}^{t}  dA_u(\sw) )f_s(\sw)] = 0,
\end{align*}
where the third equality follows since $\omega$, which is a $(\gamma,\F^{\gamma})$-martingale, is consequently by causality a $(\pi, \F^{\gamma}\otimes \G)$-martingale, thus also a $(\pi, (\F^{\gamma}\otimes \G)^{\pi})$-martingale and in particular a $(\pi,\F^{\gamma}\otimes \G^{\nu})$-martingale. The last equality follows from \eqref{eq: dpp} with $X_t:=f_s1_{]s,T]}(t)$. This shows that $\xi$ is a $(\nu,\G^{\nu})$-martingale, and we conclude by an application of L\'evy theorem together with Girsanov theorem; indeed, the quadratic variation of $\xi$ at $t$ must be $t$, by the assumption $\nu\ll\gamma$. 

Finally, with the aid of \eqref{eq var diminishes}, we see that the transport plan $\hat{\pi}:= (\xi,id)_{\#}\nu$ is  optimal for \eqref{eq: finiteness_gen}. Moreover, since $\sw$ is continuous, the uniqueness of the semimartingale decomposition entails that the process $A$ found in the proof of $(ii) \Rightarrow (i)$ must have not depended on which $\pi$ (with finite cost in \eqref{eq: finiteness_gen}) we started with.
\end{proof}

We now state the main theoretical result of the article, that provides a necessary and sufficient condition for a Brownian motion to remain a semimartingale in an enlarged filtration. We use the notations introduced in Section~\ref{sect: CFG}.

\begin{theorem}[Semimartingale preservation property]\label{thm: DM_new}
The following are equivalent:
\begin{itemize}
\item[(i)] any process $B$ which is a Brownian motion
in its natural filtration $\F^B$ 
on some probability space { $(\Omega,\J,\PROB)$}, remains a semimartingale in the enlarged filtration ${ \G^B}$;
\item[(ii)] the causal transport problem \eqref{eq: finiteness_gen} is finite for some measure $\nu\sim\gamma$.
\end{itemize}
Moreover, when (i)-(ii) hold, and denoting by $B=\widetilde B + N$ the semimartingale decomposition of $B$ in ${ \G^B}$,
we have that $(\widetilde B,B)$ is a causal coupling with respect to $\F^{\tilde B}$ and ${ \G^B}$.
\end{theorem}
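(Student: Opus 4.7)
The overall strategy is to reduce Theorem~\ref{thm: DM_new} to Theorem~\ref{thm: gWnu_gen} on the canonical space $(\C,\F,\gamma,\G)$, and then to transfer results to an abstract Brownian motion $B$ on $(\Omega,\J,\PROB)$ via $B_\#\PROB=\gamma$ together with the pull-back $\G^B=B^{-1}(\G)$.

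For $(ii)\Rightarrow(i)$: Fix $\nu\sim\gamma$ satisfying (ii). Theorem~\ref{thm: gWnu_gen} provides a continuous $\G^\nu$-adapted process $A$ of integrable variation such that $W-A$ is a $(\nu,\G^\nu)$-Brownian motion, hence $W$ is a $(\nu,\G^\nu)$-semimartingale. Since $\nu\sim\gamma$ implies $\G^\nu=\G^\gamma$ and since the semimartingale property is preserved under equivalent change of measure, $W$ is also a $(\gamma,\G^\gamma)$-semimartingale. Denoting its decomposition by $W=\xi+A'$, functoriality under the measurable push-forward yields $B=\xi\circ B+A'\circ B$ as the corresponding semimartingale decomposition of $B$ in $\G^B$ under $\PROB$, establishing (i).

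For $(i)\Rightarrow(ii)$: Apply (i) to the canonical Brownian motion $W$ on $(\C,\F,\gamma)$ with enlargement $\G$ to obtain $W=\tilde W+N$ in $\G^\gamma$, where $\tilde W$ is a $(\gamma,\G^\gamma)$-Brownian motion (by L\'evy, as $[\tilde W]=t$) and $N$ is continuous, $\G^\gamma$-adapted, with $V_T(N)<\infty$ $\gamma$-a.s. The aim is to produce some $\nu\sim\gamma$ under which the decomposition of $W$ in $\G^\nu$ has FV part of integrable variation, so that Theorem~\ref{thm: gWnu_gen} delivers (ii). Introduce the $\G^\gamma$-stopping times $\tau_n:=\inf\{t:V_t(N)\geq n\}\wedge T$, which increase to $T$ $\gamma$-a.s., and the continuous $\G^\gamma$-adapted processes $\Phi_n:=\tilde W+N_{\cdot\wedge\tau_n}$. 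Each coupling $\pi_n:=(\tilde W,\Phi_n)_\#\gamma$ has first marginal $\gamma$, second marginal $\nu_n:=(\Phi_n)_\#\gamma$, and transport cost $\EE^\gamma[V_T(\Phi_n-\tilde W)]=\EE^\gamma[V_{\tau_n}(N)]\leq n$; causality $\pi_n\in\Pi^{\F,\G}(\gamma,\nu_n)$ follows from Lemma~\ref{lem: causal wiener equivalence}, since $\tilde W$ is a Brownian motion in the subfiltration $\F^{\tilde W}\vee\G^{\Phi_n}\subseteq\G^\gamma$. A weighted mixture $\nu:=\sum_n c_n\nu_n$ with corresponding $\pi:=\sum_n c_n\pi_n$, with positive weights chosen so that $\sum_n c_n n<\infty$ and $\nu\sim\gamma$, then yields a causal plan of finite cost in $\Pi^{\F,\G}(\gamma,\nu)$.

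For the \emph{moreover} statement, write $B=\tilde B+N$ for the decomposition in $\G^B$. By L\'evy (since $[\tilde B]=t$), $\tilde B$ is a $\G^B$-Brownian motion and hence a Brownian motion in its own right-continuous natural filtration $\F^{\tilde B}$. Because $\tilde B$ is $\G^B$-adapted one has $\F^{\tilde B}\vee\G^B=\G^B$, so $\tilde B$ is a Brownian motion in $\F^{\tilde B}\vee\G^B$; condition~(2) of Lemma~\ref{lem: causal wiener equivalence} (with $X=\tilde B$ and $Y=B$) holds, giving that $(\tilde B,B)$ is a causal coupling with respect to $\F^{\tilde B}$ and $\G^B$. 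The main technical obstacle lies in the $(i)\Rightarrow(ii)$ step: the image measures $(\Phi_n)_\#\gamma$ need not be absolutely continuous with respect to $\gamma$, so securing the full equivalence $\nu\sim\gamma$ in the mixture demands additional care---for instance by enriching the mixture with a Girsanov-type density correction ensuring that null sets match in both directions.
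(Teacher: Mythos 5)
Your $(ii)\Rightarrow(i)$ argument and the \emph{moreover} part are correct and essentially the paper's own. The genuine problem is your $(i)\Rightarrow(ii)$ step, and it is not only the issue you flag at the end. The causality of $\pi_n=(\tilde W,\Phi_n)_\#\gamma$ does not follow from Lemma~\ref{lem: causal wiener equivalence}: to invoke it you would need $\tilde W$ to be a Brownian motion in a filtration containing $\Phi_n^{-1}(\G)$, and your justification rests on the inclusion $\Phi_n^{-1}(\G_t)\subseteq\G_t^{\gamma}$, which is false in general. The $\sigma$-field $\G_t$ contains anticipative sets, and pulling such a set back through the \emph{modified} path map $\Phi_n=\tilde W+N^{\tau_n}$ produces random variables that are not $\G_t^{\gamma}$-measurable: e.g.\ for an initial enlargement with $L(\omega)=\omega_T$ one gets $\Phi_n^{-1}(\sigma(\omega_T))=\sigma\bigl(\tilde W_T+N_{T\wedge\tau_n}\bigr)$, which on $\{\tau_n<T\}$ depends on the whole path and lies in no $\G_t^{\gamma}$, $t<T$. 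Worse, causality itself should be expected to fail: by Remark~\ref{rem: causal}, causality of $(\tilde W,\Phi_n)$ at $t=0$ for such an initial enlargement requires $L(\Phi_n)=\tilde W_T+N_{T\wedge\tau_n}$ to be independent of the whole path of $\tilde W$. This independence is exactly what causality of the \emph{untruncated} coupling $(\tilde W,W)$ gives for $L(W)=W_T=\tilde W_T+N_T$, but there is no reason for it to persist for the truncated functional $W_T-(N_T-N_{T\wedge\tau_n})$; conditioning on it generically gives $\tilde W$ a bridge-type drift in $\F^{\tilde W}\vee\Phi_n^{-1}(\G)$, so $\tilde W$ is not even a martingale there. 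The identity coupling works precisely because $W^{-1}(\G_t)=\G_t\subseteq\G^{\gamma}_t$; stopping the drift destroys this compatibility. (The mixing step itself would be harmless, since causality is a family of linear constraints depending only on the first marginal $\gamma$.)

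The second gap is the one you acknowledge: you never obtain $\nu\sim\gamma$, which is part of statement (ii). The laws $\nu_n=(\Phi_n)_\#\gamma$ need be neither absolutely continuous with respect to $\gamma$ nor dominate it, and the suggested ``Girsanov-type density correction'' is not available in general because the stopped finite-variation part $N^{\tau_n}$ need not be absolutely continuous in time (cf.\ the Bessel example (4) in Section~\ref{ex:BB}), so there is no candidate density to write down. The paper avoids both problems by modifying the \emph{measure} instead of the \emph{path}: with $B=M+U$ the decomposition in $\G^B$, set $d\Q/d\PP=c\,(1+V_T(U))^{-1}$, a bounded strictly positive $\F^B_T$-measurable density, so that $\Q\sim\PP$ and $\EE^{\Q}[V_T(U)]<\infty$; Girsanov then gives $B=\widetilde M+N$ with $\widetilde M$ a $(\Q,\G^B)$-Brownian motion, and the extra drift $\int_0^{\cdot}d\langle Z,M\rangle_s/Z_s$ is shown to be of $\Q$-integrable variation via Kunita--Watanabe and Burkholder--Davis--Gundy. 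Taking $\nu:=B_\#\Q\sim\gamma$, the coupling $(\widetilde M,B)_\#\Q$ is causal by Lemma~\ref{lem: causal wiener equivalence} --- unproblematic here because the second coordinate is $B$ itself --- and has finite cost in \eqref{eq: finiteness_gen}. You would need an analogous device (or a different construction) to close your argument.
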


When (i)-(ii) hold, the idea is that $(\widetilde B,B)$ is an optimal coupling (possibly under a different measure) for a causal transport problem as in \eqref{eq: finiteness_gen}.

\begin{proof}
$(ii)\Rightarrow(i)$:\ By Theorem~\ref{thm: gWnu_gen},
there exists a continuous, $\G^{\nu}$-adapted, integrable variation process $A$ such that the process $\xi_t(\sw) :=\sw_t-A_t(\sw)$ is a $(\nu,\G)$-Brownian motion. Since $\nu\sim\gamma$, Girsanov theorem implies that $\sw$ is a $(\gamma,\G)$-semimartingale.
Moreover, since $\sw$ is the coordinate process on $\C$, and from $B_\#\PP=\gamma$ and ${ \G^B}=B^{-1}(\G)$, we have that $B$ is a $(\PP,{ \G^B})$-semimartingale.

$(i)\Rightarrow(ii)$:\ Let $B=M+U$ be the semimartingale decomposition of $B$ in ${\G^B}$, with $M$ a $(\PP,{ \G^B})$-Brownian motion and $U$ a finite variation process, so that in particular $V_T(U)<\infty$.
Since $0<(1+V_T(U))^{-1}\leq 1$, we have
\[\textstyle
c^{-1}:=\EE^\PP\left[(1+V_T(U))^{-1}\right]\in(0,1],
\]
and $Z_T:=c\,(1+V_T(U))^{-1}>0$ $\PP$-a.s., with $\EE^\PP[Z_T]=1$. We can then define a probability measure $\Q$ on $(\Omega,\F^B_T)$ via $\frac{d\Q}{d\PP}:=Z_T$, so that $\Q\sim\PP$ and
\begin{equation}\label{eq VU}\textstyle
\EE^\Q[V_T(U)]=c\EE^\PP\left[\frac{V_T(U)}{1+V_T(U)}\right]\leq c<\infty.
\end{equation}

Let $Z$ be the $(\PP,{ \G^B})$-martingale defined from $Z_T$. Then, by Girsanov theorem, $B$ has decomposition
\[\textstyle
B_t=\widetilde M_t+
\left(\int_0^t\frac{d\langle Z,M\rangle_s}{Z_s}+U_t\right),
\]
where $\widetilde M:=M-\int_0^.\frac{d\langle Z,M\rangle_s}{Z_s}$ is a $(\Q,{ \G^B})$-Brownian motion.
Moreover,
\begin{eqnarray*}\textstyle
\EE^\Q\left[\int_0^T\left|\frac{d\langle Z,M\rangle_s}{Z_s}\right| \right] 
&=& \textstyle\EE^\PP\left[
\int_0^T\left|d\langle Z,M\rangle_s\right| \right]
\leq \EE^\PP\left[\sqrt{\langle Z,Z\rangle_T}\sqrt{\langle M,M\rangle_T}\right]\\
&=& \textstyle \sqrt{T}\EE^\PP\left[\sqrt{\langle Z,Z\rangle_T}\right]\leq K\sqrt{T}\EE^\PP\left[\sup_{t\in[0,T]}Z_t\right]
\leq cK\sqrt{T},
\end{eqnarray*}
by the Kunita-Watanabe inequality and the Burkholder-Davis-Gundy inequality (with constant $K$). Together with~\eqref{eq VU}, this implies that the process $N:=\int_0^.\frac{d\langle Z,M\rangle_s}{Z_s}+U$ is of $\Q$-integrable variation. This shows that $(ii)$ holds with $\nu:=B_\#\Q$ ($\gamma=B_\#\PP$ and $\Q\sim\PP$ imply $\nu\sim\gamma$), { as $\pi=(\widetilde M,B)_\#\Q\in\Pi^{\F,\G}(\gamma,\nu)$ has finite cost in \eqref{eq: finiteness_gen}.}

Finally, by Lemma~\ref{lem: causal wiener equivalence}, $(\widetilde B,B)$ is a causal coupling  with respect to $\F^{\tilde B}$ and ${ \G^B}$.
\end{proof}

From the above results it is clear that if there is one causal transport in $\Pi^{\F,\G}(\gamma,\nu)$, for some measure $\nu\sim\gamma$, for which the difference of the coordinates is of integrable variation and a.s.\ absolutely continuous, then the finite variation part in the semimartingale decomposition of the Brownian motion in the enlarged filtration is also absolutely continuous, see Section~\ref{sec abs}.

\begin{remark}[Multidimensional processes]\label{rem multi}
We want to point out that the previous theorems can be easily extended to a multidimensional setting.
Indeed, instead of the path space $\C$, that accommodates $1$-dimensional continuous processes, we can consider $\C^d$, path space for $d$-dimensional continuous processes. In this case, we write $\{(a^i_t)_{t\in [0,T]}\}_{i=1}^d\mapsto V_T(a):=\sum_{i=1}^d V_T( a^i)$ in \eqref{eq: FV}-\eqref{eq: finiteness_gen} for the variation of a multidimensional process. Then the proof of Theorem~\ref{thm: gWnu_gen} follows exactly the same arguments, where the dual projections are now taken componentwise. As for Theorem~\ref{thm: DM_new}, one should define $Z_T$ as $c/(1+\sum_{i=1}^dV_T(U_i))$ instead.
\end{remark}

\subsection{The absolutely continuous case}\label{sec abs}

In many well-known filtration enlargements, the finite variation part in the semimartingale decomposition of the Brownian motion in the enlarged filtration is absolutely continuous with respect to Lebesgue (as in the examples \eqref{eq:atoms}, \eqref{eq:Brownian bridge} and \eqref{eq:progressive} above), i.e.\ is of the form
$$dB_t=d\widetilde B_t+b_t(B)dt.$$
This is true, for example, in the case of initial enlargement of filtrations under Jacod's assumption (see \cite{Jac85}) and under Yor's method (see \cite[Sect.~12.1]{Yor97}), as well as in the case of progressive enlargement with a random time (see \cite{JY78} and \cite{Jeu80}); for general enlargements see \cite{ADI}. That is why this is a framework of major interest which deserves a deeper analysis. In analogy to Theorems~\ref{thm: gWnu_gen} and~\ref{thm: DM_new}, we can give necessary and sufficient conditions for such a decomposition to hold, together with a characterization of $b$ in terms of causal transport.

Let us introduce a convenient notation. For a function $h\in\C$, we denote 
$$
\abs{h_t} := \left \{
\begin{array}{ll}
\dot{h}_t &,\mbox{ if $h$ is absolutely continuous}\\
+\infty &,\mbox{ else}.
\end{array}
\right .
$$

\begin{theorem}\label{thm: gWnu}
Let $\nu$ be some measure on $\C$ such that $\nu\ll\gamma$, and let $\rho:\R\to \R_+$ be a convex even function such that $\rho(+\infty)=+\infty$ and $\rho(0)=0$. Then the following are equivalent:
\begin{itemize}
\item[(i)] for some $\G$-predictable process $\alpha=\alpha(\sw)$ such that 
$$\textstyle \EE^{\nu}\left [\int_0^T\rho(\alpha_s)ds\right ]< \infty,$$  the process $\xi_t(\sw) :=\sw_t-\int_0^t\alpha_s(\sw)ds$ is a $(\nu,\G^\nu)$-Brownian motion;
\item[(ii)]  the following causal optimal transport problem is finite: 
\begin{equation}\label{eq: finiteness} \textstyle
\inf\limits_{\pi\in\Pi^{\F ,\G }(\gamma, \nu)}\EE^\pi\left[\int_0^T\rho(\abs{\sw_t-\omega_t})dt\right].
\end{equation}
\end{itemize}
Moreover, whenever (i)-(ii) hold, then $\hat{\pi}:= (\xi,id)_{\#}\nu $ belongs to $\Pi^{\F ,\G }(\gamma, \nu)$, it is optimal for \eqref{eq: finiteness}, and for every $\pi\in\Pi^{\F ,\G }(\gamma, \nu)$ with finite cost in \eqref{eq: finiteness}, it holds that the process $\tilde \alpha_t(\omega,\sw):=\alpha(\sw)$ 
equals the predictable projection of $\abs{\sw_t-\omega_t}$ with respect to $(\pi,\{\emptyset,\C\}\otimes \G^{\nu})$. 
\end{theorem}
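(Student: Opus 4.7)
The plan is to follow the structural template of Theorem \ref{thm: gWnu_gen} but to replace total-variation arguments with arguments suited to the convex cost $\rho(|\cdot|)$ of the derivative. The key substitution is that \emph{dual predictable projection of a finite-variation process} is replaced by \emph{predictable projection of a density}, and the measurability reduction via Lemma~\ref{lem proj} is used the same way. The convention $\abs{h_t}=+\infty$ when $h$ is not absolutely continuous conveniently forces any finite-cost coupling to live, $\pi$-a.s., on paths where $\sw-\omega$ is absolutely continuous, so that $\beta_t(\omega,\sw):=\abs{\sw_t-\omega_t}$ is a well-defined (jointly measurable) process.

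For $(i)\Rightarrow(ii)$, I would mimic the short argument of Theorem~\ref{thm: gWnu_gen}: set $\hat\pi=(\xi,id)_\#\nu$; causality follows from Lemma~\ref{lem: causal wiener equivalence} (since $\xi$ is a $\G^\nu$-Brownian motion and $\G^\nu\supseteq\F^\xi$), and the cost under $\hat\pi$ equals $\EE^\nu[\int_0^T\rho(\alpha_s)\,ds]<\infty$ by construction.

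For $(ii)\Rightarrow(i)$, fix $\pi\in\Pi^{\F,\G}(\gamma,\nu)$ with finite cost. Then $\sw-\omega$ is absolutely continuous $\pi$-a.s., and $\beta_t(\omega,\sw):=\abs{\sw_t-\omega_t}$ satisfies $\EE^\pi[\int_0^T\rho(\beta_t)\,dt]<\infty$. Take $\tilde\alpha$ to be the predictable projection of $\beta$ with respect to $(\pi,\{\emptyset,\C\}\otimes\G^\nu)$; Lemma~\ref{lem proj} lets me assume $\tilde\alpha_t(\omega,\sw)=\alpha_t(\sw)$ for some $\G$-predictable $\alpha$. Conditional Jensen (using convexity and evenness of $\rho$, so $\rho(\alpha_t)\le {}^p\rho(\beta_t)$) yields $\EE^\nu[\int_0^T\rho(\alpha_t)\,dt]\le\EE^\pi[\int_0^T\rho(\beta_t)\,dt]<\infty$. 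Now define $\xi_t(\sw):=\sw_t-\int_0^t\alpha_s(\sw)\,ds$. To show $\xi$ is a $(\nu,\G^\nu)$-martingale, pick $0\le s<t\le T$ and $f_s\in L^\infty(\G_s^\nu)$ and decompose
\[
\sw_t-\sw_s=(\omega_t-\omega_s)+\int_s^t\beta_u\,du
\]
under $\pi$. The term $\EE^\pi[(\omega_t-\omega_s)f_s(\sw)]$ vanishes because $\omega$ is a $(\gamma,\F^\gamma)$-martingale and causality upgrades it to a $(\pi,\F^\gamma\otimes\G^\nu)$-martingale, while $\EE^\pi[\int_s^t(\beta_u-\alpha_u)du\,f_s(\sw)]=0$ by the Fubini identity for predictable projection applied to the $\{\emptyset,\C\}\otimes\G^\nu$-predictable process $X_u:=f_s(\sw)\mathbf{1}_{(s,t]}(u)$. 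Then Lévy's characterization, combined with $\nu\ll\gamma$ so that $[\xi]_t=t$, promotes $\xi$ to a Brownian motion.

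For the ``Moreover'' part, the same Jensen chain $\EE^{\hat\pi}[\int_0^T\rho(\alpha_t)dt]\le\EE^\pi[\int_0^T\rho(\beta_t)dt]$ already gives optimality of $\hat\pi$, and uniqueness of the continuous semimartingale decomposition of $\sw$ under $\nu$ forces $\alpha$ to be independent of the choice of $\pi$. The main obstacle I anticipate is the careful justification of the predictable-projection Fubini identity in the \emph{product} filtration $\{\emptyset,\C\}\otimes\G^\nu$, together with the reduction via Lemma~\ref{lem proj} that allows $\tilde\alpha$ to depend only on $\sw$; once this measurability/projection bookkeeping is handled, the convex-cost argument via Jensen is what makes the whole absolutely continuous setting work more cleanly than the total-variation case.
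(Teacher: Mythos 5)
Your proposal is correct and takes essentially the same route as the paper, which only sketches this proof by reducing to Theorem~\ref{thm: gWnu_gen} (via $\rho(x)\geq m|x|+n$, $m>0$) and invoking that the dual predictable projection of $\int_0^\cdot a_t\,dt$ is $\int_0^\cdot {}^pa_t\,dt$. Your explicit conditional-Jensen step $\rho({}^p\beta)\leq {}^p(\rho\circ\beta)$ is exactly the absolutely continuous replacement of the variation bound \eqref{eq var diminishes}, so the argument matches the intended proof.
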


\begin{theorem}[Semimartingale preservation property]\label{thm: DM_new_2}
The following are equivalent:
\begin{itemize}
\item[(i)] any process $B$ which is a Brownian motion
in its natural filtration $\F^B$ 
on some probability space { $(\Omega,\J, \PROB)$}, remains a semimartingale in the enlarged filtration ${\G^B}$, with decomposition
\begin{eqnarray}\label{eq: BBa}
dB_t=d\widetilde B_t + b_t(B)dt;
\end{eqnarray}
\item[(ii)] the causal transport problem \eqref{eq: finiteness} is finite for some measure $\nu\sim\gamma$ and some function $\rho$ as in Theorem~\ref{thm: gWnu}.
\end{itemize}
Moreover, if $(ii)$ holds for $\nu=\gamma$, then the value of the causal transport problem \eqref{eq: finiteness} equals $\EE^\gamma[\int_0^T\rho(b_t(B))dt]<\infty$, hence the information drift in \eqref{eq: BBa} is $\rho$-integrable.
\end{theorem}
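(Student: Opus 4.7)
My plan is to mirror the argument from the proof of Theorem~\ref{thm: DM_new}, substituting Theorem~\ref{thm: gWnu} for Theorem~\ref{thm: gWnu_gen} throughout in order to keep track of absolute continuity of the drift.

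For the implication $(ii)\Rightarrow(i)$, I would apply Theorem~\ref{thm: gWnu} to obtain a $\G$-predictable process $\alpha$ on $\C$ such that, under $\nu$, the coordinate process decomposes as $\sw_t = \xi_t + \int_0^t \alpha_s(\sw)\,ds$ with $\xi$ a $(\nu,\G^\nu)$-Brownian motion. Since $\nu\sim\gamma$, Girsanov's theorem implies that $\sw$ is also a $(\gamma,\G)$-semimartingale, and translating to the probability space $(\Omega,\J,\PP)$ via $B_\#\PP = \gamma$ and $\G^B = B^{-1}(\G)$ makes $B$ a $(\PP,\G^B)$-semimartingale. To strengthen this to the form $dB_t = d\widetilde B_t + b_t(B)\,dt$, I would track the Girsanov correction associated to the density $D_t = \EE^\nu[d\gamma/d\nu\mid\G_t]$, and argue, using uniqueness of the continuous semimartingale decomposition together with the fact that the drift $\int\alpha\,ds$ under $\nu$ is already absolutely continuous, that the finite variation part in $\G^B$ under $\PP$ is absolutely continuous as well.

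For the reverse implication $(i)\Rightarrow(ii)$, the natural candidate is $\nu = \gamma$ together with the canonical coupling $\pi_0 := (\widetilde B, B)_\#\PP \in \Pi^{\F,\G}(\gamma,\gamma)$, whose causality follows from Lemma~\ref{lem: causal wiener equivalence}. Under $\pi_0$ the path difference $\sw-\omega = \int_0^\cdot b_s(\sw)\,ds$ is absolutely continuous and the cost reduces to $\EE^\PP[\int_0^T \rho(b_t(B))\,dt]$, which gives (ii) directly whenever this expectation is finite for some admissible $\rho$. Otherwise, following the construction in the proof of Theorem~\ref{thm: DM_new}, I would switch to an equivalent measure $\Q$ with $d\Q/d\PP \propto (1+\int_0^T |b_s(B)|\,ds)^{-1}$ so that $\EE^\Q[\int_0^T |b_s|\,ds] < \infty$, take $\nu := B_\#\Q$, and build the causal plan from the $(\Q,\G^B)$-decomposition of $B$, so that the cost is controlled by $\EE^\Q[\int_0^T |b_s|\,ds]$ plus the Girsanov correction.

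For the ``moreover'' assertion, when $\nu = \gamma$ the optimal plan furnished by Theorem~\ref{thm: gWnu} is $\hat\pi = (\xi,id)_\#\gamma$, with value $\EE^\gamma[\int_0^T \rho(\alpha_t(\sw))\,dt]$; uniqueness of the continuous semimartingale decomposition of $\sw$ in $\G$ under $\gamma$ then forces $\alpha = b$ $\gamma$-a.s., producing the claimed identity and, because (ii) is assumed, its finiteness. The main obstacle I anticipate is preserving the absolute continuity of the drift through the equivalent measure changes: the Girsanov correction $\int d\langle\,\cdot\,,Z\rangle/Z$ need not be absolutely continuous when $Z$ is a general martingale in $\G$, and I expect this to be handled by choosing the densities of stochastic-exponential form with respect to the appropriate Brownian motion, so that the correction is explicitly of the form $\psi\,dt$, while arranging $\psi$ so that the resulting combined drift is integrable against $\rho$.
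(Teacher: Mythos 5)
Your proposal is correct and is essentially the proof the paper has in mind: the authors omit the argument for Theorem~\ref{thm: DM_new_2} precisely because it "follows the same steps" as Theorems~\ref{thm: gWnu_gen} and~\ref{thm: DM_new}, with Theorem~\ref{thm: gWnu} substituted for Theorem~\ref{thm: gWnu_gen}, the measure change $d\Q/d\PP\propto(1+\int_0^T|b_s(B)|\,ds)^{-1}$ for $(i)\Rightarrow(ii)$, and the identification $\alpha=b$ by uniqueness of the continuous semimartingale decomposition for the ``moreover'' part — all of which you reproduce.

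One clarification on the obstacle you flag at the end: it is vacuous. The Girsanov correction is $\int_0^\cdot d\langle Z,M\rangle_s/Z_s$ with $M$ a Brownian motion, and the Kunita--Watanabe inequality gives $|d\langle Z,M\rangle|\ll d\langle M\rangle_t = dt$ almost surely for \emph{any} martingale $Z$, so the correction is automatically of the form $\psi_t\,dt$; this is fortunate, because your proposed remedy (writing the density as a stochastic exponential driven by the relevant Brownian motion) would implicitly require a predictable representation property in the enlarged filtration $\G^B$, which is not available in general. With that settled, you should also name the cost used in $(i)\Rightarrow(ii)$: $\rho(x)=|x|$ is admissible in Theorem~\ref{thm: gWnu}, so the $L^1$ estimates imported from the proof of Theorem~\ref{thm: DM_new} (Kunita--Watanabe plus Burkholder--Davis--Gundy) are exactly what finiteness of \eqref{eq: finiteness} requires; this is the counterpart of the paper's remark that every admissible $\rho$ satisfies $\rho(x)\geq m|x|+n$ with $m>0$.
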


\begin{remark}\label{rem CM}
For $\rho(x)=x^2/2$, the cost in \eqref{eq: finiteness} is called Cameron-Martin cost.
In this case, finiteness of problem \eqref{eq: finiteness} for $\nu=\gamma$ is equivalent to square integrability of the drift in \eqref{eq: BBa}, by Theorem~\ref{thm: DM_new_2}. When this holds, one can apply Girsanov theorem, which ensures that $B$ is a Brownian motion {in ${ \G^B}$} under a change of measure. Therefore, by martingale representation, the $H'$-hypothesis between $\F^B$ and ${ \G^B}$ follows, i.e.\ all $\F^B$-semimartingales remain semimartingales w.r.t. ${ \G^B}$. Square integrability of the drift holds for example when initially enlarging with a discrete random variable as in case (1) of Section \ref{ex:BB} { when the variable takes finitely many values}, while it fails in the Brownian bridge case and for progressive enlargements with last hitting times, as in (2) and (3) of Section \ref{ex:BB}. We stress the fact that the semimartingale property  being preserved by the Brownian motion is usually not enough to guarantee the $H'$-hypothesis, see \cite{JY79}. 
In the case of initial enlargements with a random variable, we also have that the value of the causal problem \eqref{eq: finiteness} equals the mutual information between the Brownian motion $B$ and such random variable, see Section~\ref{sect:con}.
\end{remark}

The proofs of the above theorems follow the same steps of the proofs of Theorems~\ref{thm: gWnu_gen} and~\ref{thm: DM_new}, so we omit them. One simply observes that $\rho(x)\geq m|x|+n$ for some $m>0$, so the relevant processes in the proofs are of integrable and a.s.\ absolutely continuous variation. Then one recalls that for an integrable variation process $X$ which is absolutely continuous, say $X=\int_0^.a_tdt$, the dual predictable projection of $X$ w.r.t.\ some filtration $\H$ is also absolutely continuous, and is indistinguishable from both $\int_0^.{}^pa_tdt$ and $\int_0^.{}^oa_tdt$, where $^pa$ and $^oa$ are the predictable and optional projections of $a$ w.r.t. $\H$, respectively. 
\begin{remark}[General continuous semimartingales]\label{rem semi}
The theorems above, as well as those in the non-absolutely continuous setting, have an analogue outside the Brownian framework. In order to establish this, we need a condition for transport plans that generalizes the concept of causality introduced in Definition~\ref{def:causality}, namely:
\begin{equation}\label{eq:gen_caus}
\EE^\pi[(\omega_t-\omega_s)f_s(\sw)]=0\qquad \forall\ 0\leq s<t\leq T,\, f_s\in L^\infty(\C,\G_s^\nu,\nu).
\end{equation}
In particular, if $X$ is a continuous semimartingale on a probability space { $(\Omega,\F^X,\PROB)$}, which remains a semimartingale in the enlarged filtration ${ \G^X}$ with canonical decomposition
$X=\widetilde X + N$, then the transport plan $(\widetilde X,X)_\#\PROB$ satisfies \eqref{eq:gen_caus}.

In this framework, an analogue of Theorem~\ref{thm: gWnu_gen} can be established, where now the process $\xi$ in (i) is only required to be a $(\nu,\G^{\nu})$-martingale, and where the optimal transport problem in (ii) is formulated over transport plans in $\Pi(\mu, \nu)$, for some martingale law $\mu$, which satisfy \eqref{eq:gen_caus}. This result then leads to an analogue of Theorem~\ref{thm: DM_new}, giving a necessary and sufficient condition for any continuous semimartingale $X$ to remain a semimartingale in the enlarged filtration ${ \G^X}$.
In the same way one has the analogues of Theorems~\ref{thm: gWnu} and~\ref{thm: DM_new_2} for general continuous semimartingales.
\end{remark}

Let us go back to the absolutely continuous Brownian setting. The proofs sketched before the remark above, are of stochastic analysis flavour, exactly as for Theorems~\ref{thm: gWnu_gen} and~\ref{thm: DM_new}. We now describe what the optimal transport perspective has to say in the absolutely continuous case. An interesting observation is that in this setting we can actually say more about the problem dual to \eqref{eq: finiteness}, which, as Theorem~\ref{prop duality} below points out, corresponds to
\begin{align} \textstyle
\sup\limits_{\substack{  \psi\in C_b(\C),\, h\in \fH \\\psi(\sw)\leq \int_0^T \rho(\abs{\sw_t-\omega_t})dt +h(\omega,\sw)  }} \label{eq dual wiener}
\left\{\int \psi(\sw)\nu(d\sw) \right \} ,
\end{align}
where 
\begin{equation*}
\textstyle \fH\,\,:=\,\, span\left (\left\{ g(\sw)\left[f(\omega)- \EXP^\gamma[ f| \F_t](\omega)\right]\, : \,f\in C_b(\C),g\in B_b(\C,\G_{t}),t\in [0,T]\right \} \right ).\label{eq h wiener}
\end{equation*} 
The next result is proved in Appendix \ref{sec Orlicz proof}, where its ingredients are more closely examined. The necessary elementary facts on Orlicz spaces are given in Appendix~\ref{Sec orlicz}. We stress that even without anticipation of information (i.e.\ $\F=\G$) this is a novel result. 

Let us define the \emph{refined dual} problem as
\begin{equation} \textstyle
\sup\left\{\int\left[\int_0^T F_t(\sw)d\sw_t - \int_0^T \rho^*(F_t(\sw))dt\right ] \nu(d\sw) :\,\, F\in S_a(\G)\right\},\label{eq: dual easy}
\end{equation}
where $\rho^*$ denotes the convex conjugate of $\rho$, and $S_a(\G)$ denotes the set of simple previsible processes w.r.t. $\G$, namely,
$$\textstyle S_a(\G):=\left\{\sum_{i=1}^m F^i(\sw)\IND_{(\tau_i,\tau_{i+1}]}(t):m\in\mathbb{N},0\leq\tau_1\leq\dots\leq \tau_m\leq T\mbox{ $\G$-stopping times},F^i\in B_b(\G_{\tau_i})\right\},$$
so the first integral in \eqref{eq: dual easy} is defined as a finite sum as customary.

\begin{theorem}\label{thm orlicz}
Let $\nu, \rho$ be as in Theorem~\ref{thm: gWnu}. Suppose further that $\rho$ is strictly convex and satisfies for some $C,x_0>0$ and $\ell>1$:
$$\textstyle \rho(x)=0\Leftrightarrow x=0\,\,,\,\, \frac{\rho(0)}{0}=0\,\,,\,\, \frac{\rho(+\infty)}{+\infty}=+\infty\,\,,\,\, \rho(2x)\leq C\rho(x) \mbox{ and $\rho(x)\leq \frac{\rho(\ell x)}{2\ell}$ if $x\geq x_0$ }.$$ 
 Then:\\
  (i)\,\, The primal \eqref{eq: finiteness}, the dual \eqref{eq dual wiener}  and the refined dual \eqref{eq: dual easy}, have the same value.
  
  \vspace{5pt}
   \noindent  From now on we assume that this common value is finite. 
   \vspace{5pt}
   
\noindent (ii)\,\,
 The refined dual \eqref{eq: dual easy} can be computed (without changing its value) over $M^{\rho^*}$, the closure of $S_a(\G)$ w.r.t.\ the so-called gauge norm 
 \begin{equation}
 \textstyle F\mapsto\|F\|_{\rho^*}:=\inf \{\beta >0: \int \int_0^T \rho^*(F_t(\sw)/\beta)dt\,\nu(d\sw)\leq 1\}, \label{eq gauge}
 \end{equation} 
 and it is attained there by a unique optimizer $\hat{F}=\hat{F}(\sw)$.\\
\noindent
(iii)\,\, The optimal drift $\alpha$ in Theorem~\ref{thm: gWnu}(i) is related to $\hat{F}$ through 
\begin{equation}\label{eq rho rho star}
\rho^*(\hat{F}_t(\sw))+ \rho(\alpha_t(\sw))=\alpha_t(\sw)\hat{F}_t(\sw) \,\,\,\ d\nu\times dt\mbox{-a.s}, 
\end{equation}
namely $\alpha_t(\sw)= (\rho^*)'(\hat{F}_t(\sw))$,
or equivalently, $\hat{F}_t(\sw) \in \partial \rho(\alpha_t(\sw))$, {with $\partial$ denoting sub-differential.}   \\
(iv)\,\, $\xi_t(\sw) :=\sw_t-\int_0^t\alpha_s(\sw)ds$ is a $(\nu,\G^\nu)$-Brownian motion, so if further $\gamma\ll\nu$ we have that the canonical process $\sw$ is a $(\gamma,\G^\nu)$-semimartingale.
\end{theorem}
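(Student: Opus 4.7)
The equality primal $=$ abstract dual is furnished by Theorem~\ref{prop duality}: the cost $\int_0^T\rho(\abs{\sw_t-\omega_t})dt$ is bounded below and lower semicontinuous on $\C\times\C$ and $\gamma$ has the requisite weak continuity property. To compare with \eqref{eq: dual easy}, fix $\pi\in\Pi^{\F,\G}(\gamma,\nu)$ and $F\in S_a(\G)$. Causality and the fact that $\omega$ is a $(\gamma,\F^\gamma)$-martingale give $\EE^\pi[\int_0^T F_t\,d\omega_t]=0$; the Fenchel--Young inequality applied to $a=\abs{\sw_t-\omega_t}$ then yields
$$\textstyle\EE^\pi\!\left[\int_0^T F_t\,d\sw_t-\int_0^T\rho^*(F_t)\,dt\right]=\EE^\pi\!\left[\int_0^T F_t\,d(\sw-\omega)_t-\int_0^T\rho^*(F_t)\,dt\right]\leq\EE^\pi\!\left[\int_0^T\rho(\abs{\sw_t-\omega_t})\,dt\right],$$
so refined dual $\leq$ primal. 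For the reverse direction I plan to exhibit a near-optimizer: with $\alpha$ the drift from Theorem~\ref{thm: gWnu}(i), set $F^*_t:=\rho'(\alpha_t)$, which is $\G$-predictable and saturates Young's inequality pointwise, $F^*_t\alpha_t-\rho^*(F^*_t)=\rho(\alpha_t)$ a.s. The $\Delta_2$-type condition on $\rho^*$ (guaranteed by $\rho(x)\leq\rho(\ell x)/(2\ell)$) ensures $F^*\in M^{\rho^*}$, and approximation in the gauge norm by elements of $S_a(\G)$ gives primal $\leq$ refined dual, closing the chain.

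\textbf{Existence and uniqueness of $\hat F$ (ii).} The $\Delta_2$ condition satisfied by both $\rho$ and $\rho^*$ makes the Orlicz spaces coincide with their Morse subspaces, and $L^{\rho^*}=M^{\rho^*}$ is reflexive. The objective in \eqref{eq: dual easy} is the sum of a continuous linear form $F\mapsto\EE^\nu[\int F\,d\sw]$ (well-defined on $M^{\rho^*}$ using $\sw=\xi+\int\alpha$ together with H\"older for Orlicz spaces) and a strictly concave upper semicontinuous map $F\mapsto-\EE^\nu[\int\rho^*(F)\,dt]$; the inequality of the previous paragraph shows the functional is bounded above by the finite primal value and coercive because $\rho^*$ grows superlinearly. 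Existence and uniqueness of $\hat F\in M^{\rho^*}$ then follow from standard direct methods in reflexive Banach spaces, with uniqueness coming from strict convexity of $\rho^*$ (itself a consequence of strict convexity and superlinearity of $\rho$).

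\textbf{Fenchel relation (iii) and drift (iv).} With both sides attained, equality of primal and refined-dual values yields
$$\textstyle\EE^\nu\!\left[\int_0^T\rho(\alpha_t)\,dt\right]=\EE^\nu\!\left[\int_0^T\hat F_t\,d\sw_t-\int_0^T\rho^*(\hat F_t)\,dt\right]=\EE^\nu\!\left[\int_0^T \hat F_t\,\alpha_t\,dt-\int_0^T\rho^*(\hat F_t)\,dt\right],$$
the second equality using $\sw=\xi+\int\alpha$ and vanishing of the Brownian integral $\EE^\nu[\int\hat F\,d\xi]$. Rearranging gives $\EE^\nu\int_0^T[\rho^*(\hat F_t)+\rho(\alpha_t)-\hat F_t\alpha_t]dt=0$; pointwise nonnegativity of the integrand (Young again) forces \eqref{eq rho rho star} $d\nu\times dt$-a.s., which by strict convexity of $\rho$ is equivalent to $\alpha_t=(\rho^*)'(\hat F_t)$. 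Part (iv) is then immediate from Theorem~\ref{thm: gWnu}(i)--(ii), and Girsanov supplies the semimartingale decomposition of $\sw$ under $\gamma$ when $\gamma\ll\nu$. The principal obstacle is the approximation step in (i): $F^*=\rho'(\alpha)$ is generally neither continuous nor simple, and $\alpha$ is only $\G^\nu$-predictable, so one must carry out a careful density argument in $M^{\rho^*}$ to reach $F^*$ by bounded $\G$-previsible step processes in the gauge norm while keeping the objective convergent.
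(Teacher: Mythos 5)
Your outline is correct in substance, but it is exactly the alternative route the authors mention after the theorem and deliberately avoid: you import the drift $\alpha$ from Theorem~\ref{thm: gWnu}(i) and exhibit $F^*\in\partial\rho(\alpha)$ as a (near-)maximizer of \eqref{eq: dual easy}, whereas the paper goes the other way round. There, the elementary integral functional is extended to $M^{\rho^*}$ using only finiteness of the refined dual (Lemma~\ref{lem orlicz}), an optimizer $\hat F$ is produced by Banach--Alaoglu plus the coercivity coming from $\rho\in\Delta_2$ (Lemma~\ref{lem attain}), and the drift $\alpha:=(\rho^*)'(\hat F)$ together with the martingale property of $\xi$ is \emph{derived} from the first-order conditions (Lemma~\ref{lem many}); Theorem~\ref{thm: gWnu} is never used and point (iv) falls out of the dual problem itself. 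Your route gives a shorter path to (iii)--(iv); the paper's buys self-containedness (a purely transport/functional proof of the semimartingale property) and, crucially, the implication ``refined dual finite $\Rightarrow$ primal finite''.

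That implication is the genuine gap in your part (i): the statement asserts equality of the three values with no finiteness hypothesis, and your reverse inequality is only available when the primal is finite, because Theorem~\ref{thm: gWnu}(i) presupposes exactly that. If the primal were infinite, nothing in your argument excludes a finite refined dual; the paper closes this case through Lemmas~\ref{lem attain} and~\ref{lem many}. In addition, the equality primal $=$ \eqref{eq dual wiener} via Theorem~\ref{prop duality} needs lower semicontinuity of $(\omega,\sw)\mapsto\int_0^T\rho(\abs{\sw_t-\omega_t})dt$, which you assert but which is not immediate: it is Lemma~\ref{lem aux Sobolev}, using that $\rho^*\in\Delta_2$ forces power growth of $\rho$ and a Sobolev-type compactness argument.

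Two further technical repairs are needed. First, for a general $F\in M^{\rho^*}$ the pathwise It\^o integral $\int_0^T F_t\,d\xi_t$ need not even be defined (nothing guarantees $\int_0^T F_t^2\,dt<\infty$ a.s.\ when $\rho^*$ grows slower than quadratically), so ``vanishing of the Brownian integral $\EE^\nu[\int\hat F\,d\xi]$'' is unjustified as stated. The fix, consistent with what you sketch, is to observe that on $S_a(\G)$ the functional $F\mapsto\EE^\nu[\int_0^T F_t\,d\sw_t]$ coincides with $F\mapsto\EE^\nu[\int_0^T F_t\alpha_t\,dt]$ (for bounded simple previsible integrands the expectation against the $(\nu,\G^\nu)$-Brownian motion $\xi$ does vanish), and the latter is gauge-norm continuous by Orlicz--H\"older since $\alpha\in L^{\rho}$; working with this extension both legitimizes your limit $F^n\to F^*$ in (i) and replaces the Brownian-integral step in (iii). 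Second, your uniqueness argument is backwards: strict convexity of $\rho$ yields differentiability of $\rho^*$ (which is what (iii) uses), not strict convexity of $\rho^*$; uniqueness of $\hat F$ actually hinges on smoothness of $\rho$ along the range of $\alpha$, a point on which the paper is itself terse but which your stated implication does not deliver. Finally, note that $F^*\in L^{\rho^*}$ uses $\rho\in\Delta_2$ (via $x\rho'(x)\le\epsilon\rho(x)$ for large $x$), while $L^{\rho^*}=M^{\rho^*}$ uses $\rho^*\in\Delta_2$; you attribute membership in $M^{\rho^*}$ to the latter condition alone.
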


The typical examples for which the given conditions on $\rho$ are satisfied, are power functions { $\rho(x)\sim |x|^p$} with exponent $1<p<+\infty$, which covers the Cameron-Martin case, as well as $\rho(x)=|x|^a(1+|\log|x||)$ for $a>1$; see the comments after \cite[Ch. II.2.3, Corollary 4]{RaoRen}. 

\begin{remark}
We stress that the dual \eqref{eq dual wiener} is most often not attained on continuous functions. Still, the refined dual \eqref{eq: dual easy} admits an optimizer which induces a formal optimal element for \eqref{eq dual wiener} by setting $h(\omega,\sw):= \int_0^T \hat{F}_t(\sw)d\omega_t$ and
$$\textstyle \psi(\sw) := \int_0^T \hat{F}_t(\sw)d\sw_t - \int_0^T \rho^*(\hat{F}_t(\sw))dt = \int_0^T \hat{F}_t(\sw)d\xi_t(\sw) + \int_0^T \rho(\alpha_t(\sw))dt.$$
Notice that, even though the optimal $\hat{F}$ depends on the cost function $\rho$, the optimal drift $\alpha$ does not. {In the Cameron-Martin case $\rho(x)=x^2/2$, we actually get from \eqref{eq rho rho star} that $\hat{F}=\alpha$ and so $\psi(\sw)= \int_0^T \alpha_t(\sw)d\sw_t - \int_0^T (\alpha_t(\sw))^2/2\,dt$. Furthermore, in the absence of enlargement (i.e.\ $\F=\G$) 
 we find by Girsanov theorem the identity 
\begin{equation*}\textstyle
 \frac{d\nu}{d\gamma}(\sw)=\exp(\psi(\sw)).
\end{equation*} 
In words: the causal Kantorovich potential $\psi$ between Wiener measure and $\nu$ is the logarithm of their relative density.}
\end{remark}

In the proof of Theorem~\ref{thm orlicz} (see Appendix \ref{sec Orlicz proof}) we extend the stochastic integral $\int_0^T F_t(\sw)d\sw_t$ beyond simple $\G$-previsible integrands via functional analytic arguments, much inspired by \cite{Leo_Girsanov}. Of course,  this could have been done via Theorem~\ref{thm: gWnu}, using that a fortiori the coordinate process $\sw$ is a $(\nu,\G)$-semimartingale. We avoided this to show that there is a true transport/functional method for this. Likewise, Point $(iv)$ is obtained without using previous results. 

\begin{remark}[Multidimensional processes]\label{rem multi abs}
As seen in Remark~\ref{rem multi} for the general case, also the theorems of this section have an analogue in the multidimensional setting. It suffices to define the gauge norm \eqref{eq gauge} as acting on the euclidean norm of $F_t(\sw)$, interpret the r.h.s.\ of \eqref{eq rho rho star} as inner product, etc. This is straightforward, so we do not give the details.
\end{remark}

\subsection{Initial enlargement: Jacod's condition, entropy, and mutual information}\label{sect:con}
As explained in the ``Comparison with Jacod's condition'' Section in \cite{ADIGirsanov}, Jacod's method for initial enlargements \cite{Jac85} (see our Section \ref{sect: CFG}) can be interpreted in the following way: starting with a Brownian motion $B$ on the probability space { $(\Omega,\F^B,\PROB)$}, considering an initial enlargement of the Brownian filtration $\F^B$ with a random variable $L(B)$ and assuming that for almost all $x$, ${\PP^x}:=\PP(\cdot|L(B)=x)\ll \PP$, one applies Girsanov theorem and finds $B-A^x$ to be a (local) martingale w.r.t.\ ${\PP^x}$. Then,  combining these, one obtains that $B-A^{L(B)}$ is a (local) martingale w.r.t.\ $\PP$ and the enlarged filtration ${ \G^B=\F^B\vee\sigma(L(B))}$. 
Remember that under Jacod's condition the finite variation process $A^{L(B)}$ is absolutely continuous with respect to Lebesgue, that is, $A_t^{L(B)}=\int_0^t\alpha_sds$. 
Notably, there is a causal optimal transport counterpart to the method just described. Denote  $$\ell(dx)\,:=\,L(B)_\#\PROB, \,\,\,\mbox{   and    $\gamma^{L=x}\, =\,$ ``conditional law of $B$ given $L(B)=x$''} .$$ 

\begin{lemma} { Set $\G=\F\vee\sigma(L)$.} We have
\begin{equation}\label{eq:dis} 
\inf\limits_{\pi\in{ \Pi^{\F ,\G}}(\gamma, \gamma)}\EE^\pi\Big[\int_0^T
\rho(\abs{\sw_t-\omega_t})dt\Big] = \int \Big\{ 
\inf\limits_{\pi\in\Pi^{\F,\F}(\gamma, \gamma^{L=x})}\EE^\pi\Big[\int_0^T
\rho(\abs{\sw_t-\omega_t})dt\Big] 
 \Big\}\ell(dx).
\end{equation}
\end{lemma}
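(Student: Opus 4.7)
The strategy is to disintegrate along $L(\sw)$ on the target side of a causal plan $\pi\in\Pi^{\F,\G}(\gamma,\gamma)$, exploiting that for an initial enlargement $\sigma(L)\subseteq \G_0$.

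\textbf{($\geq$).}\ Fix $\pi\in\Pi^{\F,\G}(\gamma,\gamma)$. Applied at $t=0$, causality forces $\omega\mapsto\pi^\omega(\{L\in B\})$ to be $\F_0^\gamma$-measurable for every Borel $B\subseteq\R$; by Blumenthal's 0--1 law for $\gamma$ this map is $\gamma$-a.s.\ constant, hence equal to $\ell(B)$. Therefore $L(\sw)$ is $\pi$-independent of $\omega$, and disintegrating the second coordinate along $L$ yields $\pi=\int \pi^x\,\ell(dx)$ with $\pi^x$ the conditional law of $(\omega,\sw)$ given $L(\sw)=x$: the first marginal of $\pi^x$ is $\gamma$ by the independence, the second is $\gamma^{L=x}$ by construction. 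To verify $\pi^x\in\Pi^{\F,\F}(\gamma,\gamma^{L=x})$, pick $A\in\F_t$ and Borel $B\subseteq\R$; then $A\cap\{L\in B\}\in \G_t$, and
\[
\pi^\omega(A\cap\{L\in B\})=\int_B (\pi^x)^\omega(A)\,\ell(dx)
\]
is $\F_t^\gamma$-measurable, so varying $B$ along a countable generator of $\B(\R)$ forces $\omega\mapsto (\pi^x)^\omega(A)$ to be $\F_t^\gamma$-measurable for $\ell$-a.e.\ $x$. The cost disintegrates linearly,
\[
\EE^\pi\Big[\int_0^T\rho(\abs{\sw_t-\omega_t})dt\Big]=\int\EE^{\pi^x}\Big[\int_0^T\rho(\abs{\sw_t-\omega_t})dt\Big]\ell(dx),
\]
which dominates the right-hand side of \eqref{eq:dis}; infimizing the left-hand side over $\pi$ concludes this half.

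\textbf{($\leq$).}\ Conversely, for $\e>0$ I select an $\ell$-measurable family of $\e$-optimizers $x\mapsto\pi^x\in\Pi^{\F,\F}(\gamma,\gamma^{L=x})$ via a Jankov--von Neumann selection argument; this is legitimate because $\P(\C\times\C)$ is Polish, the set of causal plans is Borel by the countable characterization in Remark~\ref{rem: causal}, and the cost is lower semicontinuous. Define $\pi:=\int \pi^x\,\ell(dx)$; its marginals are $\gamma$ and $\int\gamma^{L=x}\ell(dx)=\gamma$. Causality with respect to $(\F,\G)$ is checked on the $\pi$-system of generators $\{A\cap\{L\in B\}: A\in\F_u,\ u>t,\ B\in\B(\R)\}$ of $\G_t$: one computes $\pi^\omega(A\cap\{L\in B\})=\int_B (\pi^x)^\omega(A)\,\ell(dx)\in\F_u^\gamma$, and intersecting over $u>t$ together with right-continuity of $\F$ yields $\F_t^\gamma$-measurability, which extends to all of $\G_t$ via a Dynkin-class argument. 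Integrating the cost against $\ell$ and letting $\e\to 0$ yields the reverse inequality.

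\textbf{Main obstacle.}\ The most delicate step is the measurable selection in the gluing direction, which requires the set of causal plans to be a Borel subset of $\P(\C\times\C)$. This follows by testing causality against a countable family of test pairs $(f,g)$ with $f\in \C_b(\C)$ and $g\in B_b(\C,\G_t)$ for rationals $t$, producing countably many measurable equalities as in Remark~\ref{rem: causal}; the remaining verifications—the causality of the glued $\pi$ and the additivity of the cost under disintegration—are routine.
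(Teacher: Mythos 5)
Your strategy is exactly the one the paper has in mind (it omits the proof, describing it as: check that the conditional plans $\pi^{L(\sw)=x}$ of a causal $\pi\in\Pi^{\F,\G}(\gamma,\gamma)$ lie in $\Pi^{\F,\F}(\gamma,\gamma^{L=x})$, then a standard measurable selection for the converse), and in substance your argument works; in particular the Blumenthal step, giving $L(\sw)\perp\omega$ under any causal $\pi$ and hence identifying the first marginal of $\pi^x$ as $\gamma$, is precisely the point that makes the disintegration direction go through, and the Jankov--von Neumann selection of $\e$-optimizers is the ``standard measurable selection argument'' the paper alludes to. The one place where your write-up, as literally stated, does not work is the causality check for the glued plan: the family $\{A\cap\{L\in B\}:\,A\in\F_u,\ u>t,\ B\in\B(\R)\}$ is not a generator of $\G_t$ (its generated $\sigma$-field is $\F_T\vee\sigma(L)$), and for a fixed $A\in\F_u$ with $u>t$ you cannot ``intersect over $u>t$'' to upgrade $\F_u^\gamma$-measurability of $\pi^\omega(A\cap\{L\in B\})$ to $\F_t^\gamma$-measurability, since $A\cap\{L\in B\}$ itself need not belong to $\G_t$. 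The fix is to reverse the order of your two closing steps: for each fixed $u$, prove the measurability on the $\pi$-system $\{A\cap\{L\in B\}:A\in\F_u,\,B\in\B(\R)\}$ and extend by a Dynkin-class argument to all $D\in\F_u\vee\sigma(L)$, obtaining that $\pi^\omega(D)$ is $\F_u^\gamma$-measurable; then, for $D\in\G_t\subseteq\bigcap_{u>t}(\F_u\vee\sigma(L))$, let $u\downarrow t$ and use right-continuity of the completed filtration $\F^\gamma$ (if one takes $\G_t=\F_t\vee\sigma(L)$ without regularization, it suffices to run the argument directly with $A\in\F_t$). With this reordering, and the routine bookkeeping of common null sets in $x$ (countable generating $\pi$-systems, rational times) that you already invoke for the disintegration direction, the proof is complete and coincides with the paper's intended one.
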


Observe that the integrand in the r.h.s.\ of \eqref{eq:dis} is a causal transport problem in itself, but without enlargement of filtration. The proof relies on easily checking that for $\pi\in{ \Pi^{\F ,\G}}(\gamma, \gamma)$ one has $\pi^{L(\sw)=x}\in\Pi^{\F ,\F }(\gamma, \gamma^{L=x})$, and ultimately on a standard measurable selection argument, and so we omit it. 

In the Cameron-Martin case of $\rho(x)=x^2/2$, \cite[Lemma 5]{Lassalle2} implies that the integrand in the r.h.s.\ of \eqref{eq:dis} equals the relative entropy of $\gamma^{L=x}$ w.r.t. $\gamma$, whenever this is finite.  For us this means that
\begin{equation}\label{eq:entr} \textstyle
\inf\limits_{\pi\in{ \Pi^{\F ,\G}}(\gamma, \gamma)}\EE^\pi\Big[\frac12\int_0^T
(\abs{\sw_t-\omega_t})^2dt\Big] = \int \mathrm{Ent}(\gamma^{L=x}|\gamma) \ell(dx).
\end{equation}
Since the relative entropy $\mathrm{Ent}(\gamma^{L=x}|\gamma)$ is further integrated w.r.t.\ the law of $L(B)$, we get that the r.h.s.\ in \eqref{eq:entr} corresponds to the so-called \emph{Mutual Information} between $B$ and $L(B)$, denoted by $\mbox{I}(B,L(B))$.
It is defined as the relative entropy of the joint law $P_{B,L(B)}$ w.r.t.\ the decoupling measure $P_B\otimes P_{L(B)}$, namely: 
$$\textstyle \mbox{I}(B,L(B)):=\mathrm{Ent}(P_{B,L(B)}|P_B\otimes P_{L(B)})= \int \mathrm{Ent}(\gamma^{L=x}|\gamma) \ell(dx).$$
On the other hand, by Theorem~\ref{thm: DM_new_2}, the l.h.s.\ in \eqref{eq:entr} is finite if and only if the information drift $\alpha$ in the semimartingale decomposition of $B$ w.r.t.\ ${ \G^B}$ is square integrable, in which case the value of the causal transport problem equals $\frac12\EE^\gamma[\int_0^T\alpha_t^2dt]$. In \cite{PikovKaratz} (see also \cite{ADI}, \cite{AIS}) it is proved that this value corresponds to the additional utility obtained by an investors who maximizes the expected log-utility of terminal wealth in a certain complete market model w.r.t.\ ${ \G^B}$, compared to an investor w.r.t.\ $\F^B$. Further, it is known that this value also equals the relative entropy $\mathrm{Ent}(\PP|\Q)$, where  $\Q$ is a probability measure under which $B$ is a $(\Q,{ \G^B})$-Brownian motion; see Remark~\ref{rem CM}. Putting things together, we have 
\begin{corollary}
Assuming that $\gamma^{L=x}\ll\gamma ~\forall x$ $\ell$-a.s., then
\[\textstyle \inf\limits_{\pi\in{ \Pi^{\F ,\G}}(\gamma, \gamma)}\EE^\pi\Big[\frac12\int_0^T
(\abs{\sw_t-\omega_t})^2dt\Big] =
\mathrm{Ent}(\PP|\Q) = \int \mathrm{Ent}(\gamma^{L=x}|\gamma) l(dx)=\mbox{I}(B,L(B)).
\]
\end{corollary}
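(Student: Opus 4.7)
The plan is to chain the four equalities. The leftmost identity, between the causal transport value and $\int \mathrm{Ent}(\gamma^{L=x}|\gamma)\,\ell(dx)$, is precisely \eqref{eq:entr}, which was obtained just above by disintegration (the preceding lemma) together with \cite[Lemma~5]{Lassalle2} identifying the non-enlarged Cameron--Martin causal cost with the relative entropy. So this step is already granted.

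The rightmost identity $\int \mathrm{Ent}(\gamma^{L=x}|\gamma)\,\ell(dx)=\mbox{I}(B,L(B))$ is essentially the definition of mutual information. Under the hypothesis $\gamma^{L=x}\ll\gamma$ for $\ell$-a.e.\ $x$, the joint law factorises as $P_{B,L(B)}(d\omega,dx)=\gamma^{L=x}(d\omega)\,\ell(dx)$ and is absolutely continuous with respect to the decoupling measure $P_B\otimes P_{L(B)}=\gamma\otimes\ell$, with density $(\omega,x)\mapsto (d\gamma^{L=x}/d\gamma)(\omega)$. Integrating the logarithm of this density against $P_{B,L(B)}$ yields $\mathrm{Ent}(P_{B,L(B)}\,|\,P_B\otimes P_{L(B)})=\int \mathrm{Ent}(\gamma^{L=x}|\gamma)\,\ell(dx)$, which is by definition $\mbox{I}(B,L(B))$.

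It remains to match $\mathrm{Ent}(\PP|\Q)$ with the others. Assume first that the common value is finite. Theorem~\ref{thm: DM_new_2} (with $\nu=\gamma$ and $\rho(x)=x^2/2$) then produces the decomposition $B=\widetilde B+\int_0^\cdot\alpha_s\,ds$ under $\PP$ in $\G^B$ with $\alpha\in L^2$, and identifies the causal transport value as $\tfrac12\EE^\PP[\int_0^T\alpha_s^2\,ds]$. Girsanov's theorem then provides $\Q\sim\PP$ on $\F^B_T$ with $d\Q/d\PP=\exp(-\int_0^T\alpha_s\,d\widetilde B_s-\tfrac12\int_0^T\alpha_s^2\,ds)$ making $B$ a $(\Q,\G^B)$-Brownian motion. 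Substituting $d\widetilde B=dB-\alpha\,dt$ yields $\log(d\PP/d\Q)=\int_0^T\alpha_s\,dB_s-\tfrac12\int_0^T\alpha_s^2\,ds$, and the zero-mean property of the $\PP$-martingale $\int\alpha\,d\widetilde B$ together with $dB=d\widetilde B+\alpha\,dt$ gives $\EE^\PP[\int_0^T\alpha_s\,dB_s]=\EE^\PP[\int_0^T\alpha_s^2\,ds]$. Hence $\mathrm{Ent}(\PP|\Q)=\tfrac12\EE^\PP[\int_0^T\alpha_s^2\,ds]$, closing the chain in the finite regime.

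The main (but mild) bookkeeping obstacle is the infinite-value case. If the leftmost causal transport cost equals $+\infty$, then Theorem~\ref{thm: DM_new_2} precludes a square-integrable information drift and the Girsanov construction above need not produce a probability equivalent to $\PP$; the statement $\mathrm{Ent}(\PP|\Q)=+\infty$ is to be read with the convention that any candidate $\Q$ is singular to $\PP$ (or that the Radon--Nikodym density is not log-integrable). Simultaneous infinity of the remaining three quantities then follows directly from \eqref{eq:entr} and the mutual-information disintegration above, so the chain holds in $[0,+\infty]$ in all cases.
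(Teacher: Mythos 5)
Your chain is essentially the one the paper itself uses: the equality between the first and third expressions is exactly \eqref{eq:entr} (the disintegration lemma plus \cite[Lemma~5]{Lassalle2}), the equality with $\mbox{I}(B,L(B))$ is the definition of mutual information (your disintegration of $P_{B,L(B)}$, which uses the hypothesis $\gamma^{L=x}\ll\gamma$ $\ell$-a.s., is the standard justification), and the identification of the causal value with $\frac12\EE^\gamma[\int_0^T\alpha_t^2dt]$ in the finite case comes from Theorem~\ref{thm: DM_new_2}. The only place where you depart from the paper is the treatment of $\mathrm{Ent}(\PP|\Q)$: the paper simply quotes this identity as known (Remark~\ref{rem CM} and \cite{PikovKaratz,ADI,AIS}), whereas you rederive it by an explicit Girsanov computation; this is a useful self-contained addition, and your expectation step $\EE^\PP[\int_0^T\alpha_s\,dB_s]=\EE^\PP[\int_0^T\alpha_s^2\,ds]$ is fine because, with $\alpha$ square-integrable, $\int_0^\cdot\alpha_s\,d\widetilde B_s$ is a genuine zero-mean $L^2$-martingale. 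The one point to treat with care is the very first Girsanov step: square-integrability of $\alpha$ alone does not make $\exp\bigl(-\int_0^T\alpha_s\,d\widetilde B_s-\frac12\int_0^T\alpha_s^2\,ds\bigr)$ a uniformly integrable martingale (Novikov is not automatic), so the existence of the equivalent measure $\Q$ under which $B$ is a $(\Q,\G^B)$-Brownian motion should be imported from the cited literature, exactly as Remark~\ref{rem CM} and the statement of the corollary do, rather than attributed to ``Girsanov's theorem'' alone; once $\Q$ is given, your computation of $\mathrm{Ent}(\PP|\Q)$ is correct. With that reference in place, and with your reading of the infinite case (which agrees with how the paper interprets the chain in $[0,+\infty]$), the argument is complete and matches the paper's.
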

Note that the equality with the mutual information recovers the result of \cite[Theorem 5.13]{ADI} using our methods.
If the initial enlargement is done by a discrete random variable as in Section \ref{ex:BB}-(1), then
\[
\textstyle \inf\limits_{\pi\in{ \Pi^{\F ,\G}}(\gamma, \gamma)}\EE^\pi\Big[\frac12\int_0^T
(\abs{\sw_t-\omega_t})^2dt\Big] =
{ -\sum\limits_np_n\ln(p_n)},
\]
where {$p_n=\PP(C_n)$}, and the term on the r.h.s.\ is referred to as the entropy of the partition { $\{C_n\}_{n\in\NN}$};  see \cite{Yor85} and \cite{AIS}.

\section{Robust transport bounds for stochastic optimization}\label{sect:app}

In this section we show how the causal transport framework allows us to give robust estimates for the value of additional information, as well as model sensitivity, for some classical stochastic optimization problems in continuous-time. By value of information we mean the difference between the optimal value of these problems with and without additional information (i.e.\ w.r.t.\ the enlarged and the original filtration, respectively). By model sensitivity we mean the difference between the optimal value of these problems under two different probabilistic models (i.e.\ reference probability measures). For the value of information, the main idea is to take ``causal projections'' of candidate optimizers in the problem with the larger filtration, so building a feasible element in the problem with the smaller filtration, and making a comparison possible. For model sensitivity, it means to project an optimizer under one model in order to build a feasible element for the other model, which enables a direct comparison.  In discrete-time and in the setting of model sensitivity in multistage stochastic programming, this idea goes back  to \cite{Pflug,PflugPichler}.

We start with Section~\ref{sec opt stop}, on optimal stopping problems, for which the outlined projection approach is more delicate and fully novel to the best of our knowledge (even in discrete-time). 
Then in Section~\ref{sect: utmax} we deal with utility maximization with portfolio constraints; this is a prominent example of a controlled linear system, and indeed the same arguments would be applicable to such systems in general. 
In both optimization problems considered below, we will obtain robust estimates in terms of causal minimization problem.  Finally in Section~\ref{sec project causal} we provide a link between causal transport and projections of processes which is needed for Section~\ref{sec opt stop}, and is very illustrative in its own. 

\subsection{Optimal stopping}\label{sec opt stop}
Here we consider the framework of Section~\ref{sect: CFG}, with canonical space $\C:=\C[0,T]$ and filtrations {$\F\subseteq\G$} on it. 
We begin with the definition of a randomized stopping time:

\begin{definition}
A randomized stopping time $\Sigma$ with respect to a filtration $\H$  and a probability measure $\mu\in\P(\C)$, written $\Sigma \in RST(\H,\mu)$, is an  increasing right-continuous $\H$-adapted process on $[0,T]$, with $\Sigma_0=0$ and $\Sigma_T=1,\, \mu-a.s.$
\end{definition}
This notion generalizes the concept of stopping time, say $\tau$, according to which a path $\omega$ is stopped at a unique point in time $\tau(\omega)$. Stopping according to a randomized stopping time $\Sigma$ means that a path $\omega$ is stopped in $[0,t]$ with probability $\Sigma_t(\omega)$. We recommend \cite[Sect.~3.2]{BCH} for a modern view on this matter, and refer to \cite{BaxterChacon} for the original motivation/definition.

The next lemma is of fundamental importance for our applications. It identifies what causal dual optional projections do to randomized stopping times:

\begin{lemma}\label{lem crucial stopping} 
Let $\Sigma\in RST(\G,\nu)$. Then, for any $\mu\in\P(\C)$ and any causal transport plan $\pi\in\Pi^{\F,\G}(\mu,\nu)$, there is a randomized stopping time $\tilde{\Sigma}\in RST(\F,\mu)$ such that
\begin{align*}
\textstyle
\EE^\pi \left [ \int_0^T \ell(\omega,t)d\Sigma_t(\sw)\right ] = \EE^\mu \left [\int_0^T \ell(\omega,t)d\tilde{\Sigma}_t(\omega)\right ],
\end{align*}
for all $\F$-optional processes $(\omega,t)\mapsto \ell(\omega,t)$ which are bounded or positive.
\end{lemma}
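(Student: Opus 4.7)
The plan is to construct $\tilde\Sigma$ as the dual optional projection of $\Sigma$, viewed as a process on the product space $(\C\times\C,\pi)$, onto the filtration $(\F\otimes\{\emptyset,\C\})^\pi$. Concretely, I would lift $\Sigma$ to $\C\times\C$ by setting $\Sigma_t(\omega,\sw):=\Sigma_t(\sw)$. Since $\Sigma$ is right-continuous, increasing, and bounded by $1$, its lift has $\pi$-integrable variation, so its dual optional projection $\tilde\Sigma$ with respect to the $\pi$-completion of $\F\otimes\{\emptyset,\C\}$ is well-defined. By (the optional-process analogue of) Lemma~\ref{lem proj}, one may select a version of $\tilde\Sigma$ that depends only on $\omega$; viewed on $\C$, this gives an $\F^\mu$-optional, right-continuous process, and this is the candidate for the randomized stopping time in the statement.

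Next I would verify that $\tilde\Sigma\in RST(\F,\mu)$. Monotonicity is preserved under dual optional projection of increasing processes; right-continuity is built into the standard construction; and $\tilde\Sigma_0=0$ follows from $\Sigma_0=0$. For the terminal condition, plugging the test process $X_t(\omega,\sw):=\IND_A(\omega)$ with $A\in\F^\mu_T$ into the defining identity of dual optional projection yields
\[
\EE^\mu[\tilde\Sigma_T\,\IND_A]=\EE^\pi[\Sigma_T\,\IND_A(\omega)]=\mu(A),
\]
using $\Sigma_T=1$ $\nu$-a.s.\ (hence $\pi$-a.s.) and that the first marginal of $\pi$ is $\mu$. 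Since $A$ is arbitrary, $\tilde\Sigma_T=1$ $\mu$-a.s.; together with monotonicity, this forces $\tilde\Sigma$ to take values in $[0,1]$.

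For the claimed identity, note that any $\F$-optional $\ell(\omega,t)$ lifts to an $(\F\otimes\{\emptyset,\C\})$-optional process on $\C\times\C$. The defining property of the dual optional projection gives
\[
\EE^\pi\Big[\int_0^T\ell(\omega,t)\,d\Sigma_t(\sw)\Big]=\EE^\pi\Big[\int_0^T\ell(\omega,t)\,d\tilde\Sigma_t(\omega)\Big]=\EE^\mu\Big[\int_0^T\ell(\omega,t)\,d\tilde\Sigma_t(\omega)\Big],
\]
where the last equality uses that both integrand and integrator depend only on $\omega$ and that $\pi$ has first marginal $\mu$. This settles the bounded case; the positive case follows by monotone approximation via truncation.

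The main obstacle I anticipate is the measure-theoretic step that the dual optional projection onto a filtration generated by the first coordinate genuinely admits a version independent of the second coordinate. This is where Lemma~\ref{lem proj} (or its straightforward optional-process analogue) is invoked, resting on the fact that $(\F\otimes\{\emptyset,\C\})^\pi$-optional sets are, modulo $\pi$-null sets, lifts of $\F^\mu$-optional sets on $\C$. Causality of $\pi$ is not required for this construction per se, but enters the broader picture because it is exactly what makes the projected object $\tilde\Sigma$ a meaningful ``causal projection'' of $\Sigma$, so that the lemma interacts correctly with downstream comparisons between optimal stopping values under $\F$ and $\G$.
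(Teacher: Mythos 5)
Your construction of $\tilde{\Sigma}$ as the dual optional projection of $\Sigma(\sw)$ with respect to $(\pi,(\F\otimes\{\emptyset,\C\})^{\pi})$, the reduction to a process depending only on $\omega$ via Lemma~\ref{lem proj}, and the derivation of the integral identity from the defining property of the dual projection (lifted $\F$-optional $\ell$'s are indeed $(\F\otimes\{\emptyset,\C\})$-optional) all match the paper's argument. The gap is in your verification that $\tilde{\Sigma}\in RST(\F,\mu)$. The test process $X_t(\omega,\sw):=\IND_A(\omega)$ with $A\in\F^{\mu}_T$ is \emph{not} optional for the filtration $(\F_t\otimes\{\emptyset,\C\})_t$: a time-constant process is adapted, hence optional, only when the random variable is $\F_0$-measurable. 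So you cannot plug it into the defining identity of the dual optional projection, and the conclusion $\EE^{\mu}[\tilde{\Sigma}_T\IND_A]=\mu(A)$, hence $\tilde{\Sigma}_T=1$ and $\tilde{\Sigma}\in[0,1]$, does not follow. Indeed it is false in general: the (genuine) optional projection ${}^o\Sigma$ and the dual optional projection $\tilde{\Sigma}$ differ by a martingale $M$ vanishing at $0$, so $\tilde{\Sigma}_T=\EE^{\pi}[\Sigma_T\mid\F_T\otimes\{\emptyset,\C\}]-M_T=1-M_T$, which equals $1$ only when $M\equiv 0$. For a non-causal $\pi$ this fails; e.g.\ take $\mu=\nu=\gamma$, $\G$ the progressive enlargement with the last zero $\tau$ of the path, $\pi$ the identity coupling $\omega=\sw$ (not causal since $\G_t\supsetneq\F_t$), and $\Sigma=\IND_{[\tau,T]}$: the dual optional projection onto $\F$ is a local-time-type increasing process that is not bounded by $1$, so it is not a randomized stopping time.

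This also shows that your closing remark that ``causality is not required for this construction per se'' is exactly backwards: causality is what rescues precisely this step. The paper invokes Proposition~\ref{prop crucial}, whose proof uses causality (through the $H$-hypothesis of Remark~\ref{rem: causal}) to show that the dual optional projection is indistinguishable from the optional projection, i.e.\ $M\equiv 0$ above. The optional projection of a $[0,1]$-valued process with $\Sigma_T=1$ $\pi$-a.s.\ is trivially $[0,1]$-valued with terminal value $1$, while the dual projection supplies monotonicity, right-continuity and the transfer-of-integrals identity; under causality these are the same process, which is what puts $\tilde{\Sigma}$ in $RST(\F,\mu)$. To repair your proof you should either cite or reprove this coincidence of the two projections under causality, rather than testing the dual projection against non-optional integrands.
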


\begin{proof}
Let $\tilde{\Sigma}(\omega,\sw)$ be the dual optional projection of $\Sigma=\Sigma(\sw)$ with respect to $(\pi,(\F\otimes \{\emptyset,\C\})^{\pi})$. From Lemma~\ref{lem proj}, we may assume that $\tilde{\Sigma}(\omega,\sw)= \tilde{\Sigma}(\omega)$, and from Proposition~\ref{prop crucial} below we have that $\tilde{\Sigma}$ equals the optional projection of $\Sigma$ with respect to $(\pi,(\F\otimes \{\emptyset,\C\})^{\pi})$. Moreover, by \cite[Lemma~7, Appendix~I]{DellacherieMeyerB}, we can assume $\tilde \Sigma$ to be $(\F\otimes\{\emptyset,\C\})$-optional.
This implies that $\tilde{\Sigma}$ lies in the interval $[0,1]$ too, and hence belongs to $RST(\F,\mu)$.
\end{proof}

Our purpose is to quantitatively gauge, via causal transport arguments, the dependence of optimal stopping problems on $[0,T]$ with respect to the filtration or the reference probability measure. See \cite{LambertonPages,CoquetToldo} or the seminal but unpublished work \cite{Aldousweak}, for the related issue of (qualitative) stability of these problems. Lemma~\ref{lem crucial stopping} above suggests that we should rather define optimal stopping over randomized stopping times. It is well-known that, in the non-anticipative case, one can move between formulations over stopping times and over randomized stopping times. That this is also true in the anticipative case is somewhat hidden in the aforementioned articles, so we sketch the arguments for convenience of the reader:

\begin{lemma}
\label{lem harmless}
Let $\nu\in \P(\C)$, and let $(\omega,t)\mapsto \ell(\omega,t)$ be measurable, $\F$-optional, bounded or positive. Then
\begin{align}\label{eq harmless}\textstyle
\inf\left\{ \EXP^\nu\left[\ell_\tau \right]:\, \, \tau \mbox{ a $\G$-stopping time on $\C$},\,\tau\leq T \right \} \,\, = \,\,\inf\limits_{\Sigma \in RST(\G,\nu)} \EXP^\nu\left[\int \ell_t d\Sigma_t\right].
\end{align}
Furthermore, let $(\Omega,\H,\PROB)$ be a complete filtered probability space, $X:\Omega\to \C$ measurable and $\H$-adapted with $X_\#\PROB=\nu$. Assuming that \begin{align} \forall t\leq T, \,X^{-1}(\G_T) \mbox{ is conditionally independent of $\H_t$ given $X^{-1}(\G_t)$},\label{line condition}
\end{align}
we further have that the common value in \eqref{eq harmless} equals
\begin{align}\label{eq weak formu}
\inf\left\{ \EXP^\PROB\left[\ell(X,\tau)\right]:\, \, \tau \mbox{ a $\H\vee X^{-1}(\G)$-stopping time on $\Omega$},\,\tau\leq T \right \} .
\end{align}
\end{lemma}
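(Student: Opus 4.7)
The equality in \eqref{eq harmless} breaks into the trivial inequality $\inf_{\G\text{-stop}} \geq \inf_{RST(\G,\nu)}$ (every $\G$-stopping time $\tau\leq T$ yields $\Sigma_t:=\IND_{\{\tau\leq t\}}\in RST(\G,\nu)$ with $\int\ell_t\,d\Sigma_t=\ell_\tau$) and a converse which I would obtain by the standard derandomization: for $\Sigma\in RST(\G,\nu)$, define the $\G$-stopping time $\tau_u(\sw):=\inf\{t\in[0,T]:\Sigma_t(\sw)\geq u\}$ for each $u\in(0,1)$, which is genuine since $\G$ is right-continuous, $\Sigma$ is c\`adl\`ag, and $\Sigma_T=1$ forces $\tau_u\leq T$. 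The pathwise layer-cake identity
\[
\int_0^1 \ell(\sw, \tau_u(\sw))\, du \;=\; \int_0^T \ell(\sw, t)\, d\Sigma_t(\sw)
\]
(immediate for indicators of intervals, extended by a monotone class argument) then yields, after integrating against $\nu$ and applying Fubini, some $u^*\in(0,1)$ with $\EE^\nu[\ell_{\tau_{u^*}}]\leq\EE^\nu[\int\ell\,d\Sigma]$, closing \eqref{eq harmless}.

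For the second assertion, \eqref{eq weak formu}$\leq$common value is immediate by pushing any $\G$-stopping time $\tau$ on $\C$ through $X$ to the $X^{-1}(\G)\subseteq \H\vee X^{-1}(\G)$-stopping time $\tau\circ X$, with $\EE^\PROB[\ell(X,\tau\circ X)]=\EE^\nu[\ell_\tau]$. The converse is where condition~\eqref{line condition} enters. Given a $\H\vee X^{-1}(\G)$-stopping time $\tau\leq T$ on $\Omega$, put $J_t:=\IND_{\{\tau\leq t\}}$, take the optional projection of $J$ with respect to $(\PROB,(X^{-1}(\G))^\PROB)$, and use Lemma~\ref{lem proj} to realize it as $\Sigma(X)$ for a $\G^\nu$-adapted $\Sigma:\C\to[0,1]$. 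The crucial consequence of \eqref{line condition}, which is the $H$-hypothesis between $X^{-1}(\G)$ and $X^{-1}(\G)\vee\H$, is the enhanced identity
\[
\Sigma_t(X) \;=\; \EE^\PROB\!\left[J_t \,\middle|\, X^{-1}(\G_T)\right] \quad \PROB\text{-a.s.,}
\]
derived by noting that for $A\in X^{-1}(\G_T)$, conditional independence gives $\EE[\IND_A J_t]=\EE[\EE[\IND_A\mid X^{-1}(\G_t)]J_t]=\EE[\IND_A \EE[J_t\mid X^{-1}(\G_t)]]=\EE[\IND_A\Sigma_t(X)]$. From this representation, monotonicity of $\Sigma$ in $t$, the boundary values $\Sigma_0=0$ (absorbing any atom of $\tau$ at $0$ by a trivial modification) and $\Sigma_T=1$ $\nu$-a.s., and right-continuity (by dominated convergence plus right-continuity of $\G$) all follow immediately, so $\Sigma\in RST(\G,\nu)$.

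The remaining and most delicate step is the cost identity $\EE^\PROB[\ell(X,\tau)]=\EE^\nu[\int \ell_t\,d\Sigma_t]$, which is tantamount to showing that under \eqref{line condition} the optional projection $\Sigma(X)$ of $J$ \emph{also} serves as its dual optional projection. For $X^{-1}(\G)$-predictable simple integrands $\ell(X,\cdot)=\IND_{(s,u]}(t)h(X)$ with $h$ bounded and $\G_s$-measurable, the identity follows directly from the conditional expectation representation:
\[
\EE[h(X)(\Sigma_u(X)-\Sigma_s(X))] \;=\; \EE[h(X)\EE[J_u-J_s\mid X^{-1}(\G_T)]] \;=\; \EE[h(X)\IND_{\{s<\tau\leq u\}}].
\]
Extending to $\F$-optional (hence $X^{-1}(\G)$-optional) $\ell$ requires verifying the identity also on stochastic intervals $[\sigma,T]$ with $\sigma$ an $X^{-1}(\G)$-stopping time, using $\Sigma_{\sigma^-}(X)=\EE[\IND_{\{\tau<\sigma\}}\mid X^{-1}(\G_T)]$ (monotone convergence), and then invoking a functional monotone class argument on the optional $\sigma$-algebra. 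I expect this coincidence of optional and dual optional projections under \eqref{line condition} to be the main technical hurdle, and the precise spot where the hypothesis is essentially used, in close analogy with the role it plays in Lemma~\ref{lem crucial stopping}.
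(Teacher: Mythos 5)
Your proof is correct. For the first identity \eqref{eq harmless} you follow essentially the same route as the paper: the trivial inclusion of stopping times into $RST(\G,\nu)$, and the derandomization $\tau_u(\sw)=\inf\{t:\Sigma_t(\sw)\geq u\}$ together with the pathwise time-change identity and Fubini; the paper does exactly this (with the inverse defined via strict inequality), quoting \cite[Ch.~VI.55]{DellacherieMeyerB} for the layer-cake step that you reprove by monotone class. Where you genuinely diverge is the second assertion: the paper does not prove it but refers to \cite[Proposition 3.5]{LambertonPages} and \cite[Lemma 17]{CoquetToldo}, whereas you give a self-contained argument. Your key step --- using \eqref{line condition} to upgrade the optional projection of $J_t=\IND_{\{\tau\leq t\}}$ to $\Sigma_t(X)=\EE^\PROB[J_t\mid X^{-1}(\G_T)]$ and then transferring the cost --- is sound, and it is in effect a re-derivation, on $(\Omega,\H,\PROB)$, of the coincidence of optional and dual optional projections that the paper isolates in Proposition~\ref{prop crucial} and exploits in Lemma~\ref{lem crucial stopping}; this buys a proof that stays entirely within the paper's own toolbox instead of importing the weak-formulation results from the literature. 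One simplification worth noting: the final extension you label the ``main technical hurdle'' does not require stochastic intervals or the optional $\sigma$-algebra at all. Since $\G_T=\F_T=\B(\C)$, the enhanced identity for all deterministic $t$ says that the two finite measures $A\times[0,t]\mapsto\EE^\PROB[\IND_A J_t]$ and $A\times[0,t]\mapsto\EE^\PROB[\IND_A\Sigma_t(X)]$ on $X^{-1}(\G_T)\otimes\B([0,T])$ agree on a generating $\pi$-system, hence agree; therefore $\EE^\PROB[\ell(X,\tau)]=\EE^\nu\bigl[\int_0^T\ell_t\,d\Sigma_t\bigr]$ for every bounded (or positive) jointly measurable $\ell$, in particular every $\F$-optional one, with no need to restrict the test functions $h$ to be $\G_s$-measurable or to handle $\Sigma_{\sigma-}$. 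Only the minor time-zero atom issue (which you flag, and which the paper's definition of $RST$ shares) remains a cosmetic wrinkle on both sides.
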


\begin{proof}
We first prove \eqref{eq harmless}, following \cite[Proof of Lemma 9]{CoquetToldo}. Evidently the r.h.s.\ in \eqref{eq harmless} is the lesser one. For the converse inequality, take $\Sigma\in RST(\G,\nu)$ and define $$(\omega,x)\in\C\times [0,1]\mapsto\tau(\omega,x):= \inf\{t\in [0,T]: \Sigma_t> x\},$$ 
so by \cite[Ch.\ VI.55]{DellacherieMeyerB} we have $$\textstyle \EXP^\nu\left[\int \ell_t d\Sigma_t\right] = \int\int_0^T \ell(\omega,t)d\Sigma_t(\omega)\nu(d\omega) = \int \int_0^1 \ell(\omega,\tau(\omega,x))\,dx\,\nu(d\omega).$$
Observe that for $x$ fixed and each $t$ we have $ \{\omega \in \C:\,\tau(\omega,x)>t\} =\{\omega\in\C:\, \Sigma_t\leq x \}\in \G_t$, hence $\omega\mapsto \tau(\omega,x)$ is a $\G$-stopping time on $\C$. 
Applying Fubini-Tonelli theorem, we find
$$\textstyle\EXP^\nu\left[\int \ell_t d\Sigma_t\right] =  \int_0^1\int \ell(\omega,\tau(\omega,x))\,\nu(d\omega)\,dx ,$$ and  since the integrand in the r.h.s.\ here is for each $x$
larger than the l.h.s.\ of \eqref{eq harmless}, this establishes the equality. As for \eqref{eq weak formu}, one follows the arguments in \cite[Proposition 3.5]{LambertonPages}, or more precisely their extension in \cite[Lemma 17]{CoquetToldo}.
\end{proof}

If $\nu$ above is Markov (resp.\ Wiener) and $\F=\G$, then \eqref{line condition} is equivalent to $X$ being Markov (resp.\ Brownian motion) w.r.t. $\H$. This should convey the message that  both Condition \eqref{line condition} and Problem \eqref{eq weak formu} are natural in our more general context.

We now look at optimal stopping under $(\F,\mu)$, which by the previous lemma equals
\begin{align}
\label{OF}\textstyle
\tag{$OS(\F,\mu)$}
v^{\F,\mu}:=\inf\limits_{\Sigma \in RST(\F,\mu)} \EXP^\mu\left[\int \ell_td\Sigma_t\right].
\end{align}
We want to compare this problem with the one where extra information (anticipation) is available and/or the law of the process to be stopped is different, namely (again by Lemma \ref{lem harmless})
\begin{align}
\label{OG}\textstyle
\tag{$OS(\G,\nu)$}
v^{\G,\nu}:=\inf\limits_{\Sigma \in RST(\G,\nu)} \EXP^\nu\left[\int \ell_t d\Sigma_t\right].
\end{align}
The comparison of $v^{\F,\mu}$ with $v^{\G,\mu}$ corresponds to assessing the cost of information/anticipation. On the other hand, the comparison of $v^{\F,\mu}$ with $v^{\F,\nu}$ corresponds to the study of the dependence of non-anticipating optimal stopping with respect to different reference measures, or equivalently, with respect to different processes; in other words, model sensitivity. We have:

\begin{proposition}\label{prop:OS}
Assume that $v^{\F,\mu}$ and $v^{\G,\nu}$ are both finite, and that the cost function $\ell:\C\times\R_+$ is optional and $K$-Lipschitz in its first argument with respect to a metric $d$ on $\C$, uniformly in time (i.e.\ in the second argument). Then we have
\begin{align}
\label{upper bound ST}\textstyle
v^{\F,\mu}-v^{\G,\nu}\leq K\inf\limits_{\pi\in\Pi^{\F,\G}(\mu,\nu)}\EE^\pi[d(\omega,\sw)] .
\end{align}
In particular, in the two special cases of interest we have:
\begin{enumerate}[(i)]
\label{i} \item If $\mu=\nu$, then 
\begin{align}
0\leq v^{\F,\mu}-v^{\G,\mu} \leq K\inf\limits_{\pi\in\Pi^{\F,\G}(\mu,\mu)}\EE^\pi[d(\omega,\sw)];
\end{align}
\item If $\F = \G$, then
\begin{align}\label{bound bicausal}
\textstyle
\left| v^{\F,\mu}-v^{\F,\nu} \right| \leq K\inf\limits_{\pi\in\Pi_{bc}^{\F,\F}(\mu,\nu)}\EE^\pi[d(\omega,\sw)] ,
\end{align}
where the constraint in the transport problem in the right-hand side of \eqref{bound bicausal} means that both $\pi\in \Pi^{\F,\F}(\mu,\nu)$ and $\tilde{\pi}\in \Pi^{\F,\F}(\nu,\mu)$, where $\tilde{\pi}=((\omega,\sw)\mapsto (\sw,\omega))_\#\pi$.
\end{enumerate}
\end{proposition}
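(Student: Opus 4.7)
The strategy is to convert any near-optimal randomized stopping time on the enlarged side into a feasible one on the original side via the dual optional projection, and then estimate the gap in cost using the Lipschitz property and the transport cost. Throughout, fix $\varepsilon>0$, and pick $\Sigma\in RST(\G,\nu)$ with $\EE^\nu[\int \ell(\sw,t)\,d\Sigma_t(\sw)]\leq v^{\G,\nu}+\varepsilon$, and $\pi\in\Pi^{\F,\G}(\mu,\nu)$ with $\EE^\pi[d(\omega,\sw)]\leq \inf_{\pi'\in\Pi^{\F,\G}(\mu,\nu)}\EE^{\pi'}[d]+\varepsilon$.

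First I would apply Lemma~\ref{lem crucial stopping} to this $\Sigma$ and $\pi$, with the $\F$-optional integrand $(\omega,t)\mapsto \ell(\omega,t)$ (which is bounded or positive after a standard truncation/splitting if needed, relying on finiteness of $v^{\F,\mu}$ and $v^{\G,\nu}$). This produces $\tilde\Sigma\in RST(\F,\mu)$ with
\begin{equation*}
\EE^\pi\!\left[\int_0^T \ell(\omega,t)\,d\Sigma_t(\sw)\right]=\EE^\mu\!\left[\int_0^T \ell(\omega,t)\,d\tilde\Sigma_t(\omega)\right]\geq v^{\F,\mu},
\end{equation*}
the inequality being the feasibility of $\tilde\Sigma$ in \eqref{OF}.

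Next I would compare the left-hand side above to $\EE^\nu[\int \ell(\sw,t)\,d\Sigma_t(\sw)]$. Since $\Sigma$ depends only on $\sw$ and $\pi$ has second marginal $\nu$, the latter equals $\EE^\pi[\int \ell(\sw,t)\,d\Sigma_t(\sw)]$. Therefore
\begin{equation*}
\EE^\pi\!\left[\int_0^T \ell(\omega,t)\,d\Sigma_t(\sw)\right]-\EE^\nu\!\left[\int_0^T \ell(\sw,t)\,d\Sigma_t(\sw)\right]=\EE^\pi\!\left[\int_0^T (\ell(\omega,t)-\ell(\sw,t))\,d\Sigma_t(\sw)\right].
\end{equation*}
Using the uniform-in-time $K$-Lipschitz property, $|\ell(\omega,t)-\ell(\sw,t)|\leq K\,d(\omega,\sw)$, which is $t$-independent, and the fact that $\Sigma_T-\Sigma_0=1$ $\nu$-a.s., the inner time-integral is bounded by $K\,d(\omega,\sw)$, hence the above difference is at most $K\,\EE^\pi[d(\omega,\sw)]$. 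Combining this with the previous step yields
\begin{equation*}
v^{\F,\mu}\leq v^{\G,\nu}+\varepsilon+K\left(\inf_{\pi'\in\Pi^{\F,\G}(\mu,\nu)}\EE^{\pi'}[d]+\varepsilon\right),
\end{equation*}
and letting $\varepsilon\downarrow 0$ gives \eqref{upper bound ST}.

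For the two specializations: in (i), taking $\mu=\nu$ in \eqref{upper bound ST} gives the upper bound, while the lower bound $v^{\G,\mu}\leq v^{\F,\mu}$ is immediate from $RST(\F,\mu)\subseteq RST(\G,\mu)$ (since $\F\subseteq\G$). In (ii), the case $\F=\G$ allows us to apply \eqref{upper bound ST} with the roles of $(\mu,\nu)$ swapped: for $\pi\in\Pi_{bc}^{\F,\F}(\mu,\nu)$, both $\pi$ and $\tilde\pi\in\Pi^{\F,\F}(\nu,\mu)$ are causal, and $\EE^{\tilde\pi}[d(\omega,\sw)]=\EE^\pi[d(\omega,\sw)]$ by symmetry of $d$, yielding both $v^{\F,\mu}-v^{\F,\nu}$ and $v^{\F,\nu}-v^{\F,\mu}$ are bounded by the bicausal infimum, whence \eqref{bound bicausal}.

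The main subtle point, which is already subsumed in Lemma~\ref{lem crucial stopping}, is that the dual optional projection of a randomized stopping time with respect to a causal plan remains a randomized stopping time (i.e.\ values in $[0,1]$, starts at $0$, ends at $1$ on the $\mu$-side); everything else in the argument is bookkeeping and Lipschitz estimation. No further obstacle is foreseen since no measurable selection or duality result is needed beyond the already established lemma.
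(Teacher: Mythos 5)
Your proposal is correct and follows essentially the same route as the paper: project the (near-)optimal randomized stopping time for $OS(\G,\nu)$ through a causal plan via Lemma~\ref{lem crucial stopping}, use its feasibility for $OS(\F,\mu)$, bound the gap by the Lipschitz estimate together with $\Sigma_T-\Sigma_0=1$, and then specialize using $RST(\F,\mu)\subseteq RST(\G,\mu)$ for (i) and the symmetry of $d$ for (ii). The only differences are cosmetic (working with $\varepsilon$-optimizers rather than an optimizer/optimizing sequence), and your bound $K\,\EE^\pi[d(\omega,\sw)]$ is in fact the sharp form stated in the proposition, whereas the paper's intermediate display carries a superfluous factor $T$.
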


\begin{proof}
Take an optimizer ${\Sigma}$ for \eqref{OG} (same argument holds for an optimizing sequence). We write this in the $\sw$-variable and consider any causal transport $\pi$ between $\mu$ and $\nu$. Let $\tilde \Sigma \in RST(\F,\mu)$ be the randomized stopping time associated to $\Sigma$, as in Lemma~\ref{lem crucial stopping}. We have
\begin{align*}\textstyle
v^{\F,\mu} \leq \EXP^{\mu} \left[ \int_0^T l(\omega,t) d \tilde \Sigma_t(\omega) \right] = \EXP^{\pi} \left[ \int_0^T l(\omega,t) d  \Sigma_t(\sw) \right].
\end{align*}
Hence the difference can be bounded above as follows
\begin{align*}\textstyle
v^{\F\mu}-v^{\G,\nu}\leq \EXP^{\pi} \left[ \int_0^T [ l(\omega,t)-l(\sw,t)]  d \Sigma_t(\sw) \right]  \leq K T \EE^\pi[d(\omega,\sw)]. 
\end{align*}
Being $\pi$ a generic causal transport between the measures $\mu$ and $\nu$, we get the bound in \eqref{upper bound ST}.

In the case (i), obviously $v^{\F,\mu} \geq v^{\G,\mu}$, since only the set of feasible optimization variables changes. As for the case (ii), exchanging the roles of $\mu$ and $\nu$ we get
\begin{equation*}
\resizebox{.95\hsize}{!}{$\left| v^{\F,\mu}-v^{\F,\nu} \right| \leq K \max\left\lbrace \inf\limits_{\pi\in\Pi^{\F,\F}(\mu,\nu)}\EE^\pi[d(\omega,\sw)], \inf\limits_{\pi\in\Pi^{\F,\F}(\nu,\mu)}\EE^\pi[d(\omega,\sw)]\right\rbrace \leq K\inf\limits_{\pi\in\Pi_{bc}^{\F,\F}(\mu,\nu)}\EE^\pi[d(\omega,\sw)] $}.
\end{equation*}
The last inequality follows since the cost $d$ is symmetric (as a metric), implying that the r.h.s.\ can be computed on $\Pi_{bc}^{\F,\F}(\mu,\nu)$ or $\Pi_{bc}^{\F,\F}(\nu,\mu)$ equivalently.
\end{proof}

Replacing the Lipschitz condition in Proposition \ref{prop:OS} by uniform continuity, one obtains analogue results involving a modulus of continuity. Also observe that $\ell(x,t)=f(x_t)$ and $\ell(x,t)=f(\sup_{s\leq t}x_s)$ satisfy the assumptions of Proposition~\ref{prop:OS}, with $d(\omega,\sw)=\|\omega-\sw\|_\infty$, if $f$ is Lipschitz. In this case the bound in \eqref{upper bound ST} can be further majorized up to a multiplicative constant by
$$ \inf\limits_{\pi\in\Pi^{\F,\G}(\mu,\nu)}\EE^\pi[V_T(\sw-\omega)] .$$

{
\begin{remark}
Note that, for any choice of filtrations $\F$ and $\G$ on $\C$ (non-necessarily satisfying $\F\subseteq\G$), Lemma~\ref{lem crucial stopping} still holds true. This means that, under the assumptions of Proposition~\ref{prop:OS}, the bound in \eqref{upper bound ST} still holds, thus giving an estimate of the difference between the optimal stopping problems of agents with different information (non-necessarily one bigger than the other).
\end{remark}
}

\subsection{Utility maximization}\label{sect: utmax}
This part follows in spirit the previous section. We want to compare the optimal value of expected utility from terminal wealth, over a fixed finite time horizon $[0,T]$, when the reference filtration is enlarged by anticipation of information in the sense of Section~\ref{sect: CFG}.
{A wide literature is devoted to the utility maximization problem, in complete or incomplete markets, and with or without additional constraints; see \cite{XuShrI,KaratLehShreXu,KarLehShr,CvitKaratzas} among the earliest articles on the subject. Pikovsky and Karatzas~\cite{PikovKaratz} were the first ones to include anticipation of information. 
In a complete market, and for initial filtration enlargements, they provide the explicit value of this anticipation of information, in terms of log-utility maximization, with or without short-selling constraints; see also \cite{AIS,ADI}.

In this section we consider possibly incomplete markets, and any kind of anticipation of information (not just initial), and give an estimate of the value of information in terms of utility maximization under short-selling constraints, for a class of utility functions which includes the logarithm among other well-known ones; see Assumption~\ref{assump:utility}.
In order to do this, we set $\X = \Y = \C^d = \C([0,T],\R^d)$, which is the space of continuous $\R^d$-valued functions on the interval $[0,T]$, and keep the notation $\omega, \sw$ for the coordinate processes.}
Let $(\Omega,\PP)$ be a probability space, equipped with $\F^B$, the natural filtration of a $d$-dimensional Brownian motion $B= (B_1,\dots,B_d)^*$, i.e.\ $\F^B = \sigma(B_1,\dots,B_d)$, augmented so as to satisfy the usual conditions. We use notation analogous to that of Section~\ref{sect: CFG}, and denote by {$\G^B$ the enlargement of the filtration $\F^B$ with some anticipation of information on the evolution of $B$.
Hence $\G^B$} represents the information available to the informed agent. Throughout we assume:

\begin{assumption}\label{ass:Bum}
The process $B$ remains a semimartingale with respect to ${ \G^B}$, say with semimartingale decomposition $B=\widetilde{B}+A$.
\end{assumption}

We consider a financial market consisting of a riskless asset (bond), which we normalize to $1$, and $m\leq d$ risky assets whose price dynamics are described by the stochastic equations
\[ \textstyle
dS^i_t = S^i_t\big( b^i_t\ dt+ \sum_{j=1}^d \sigma^{i,j}_t dB^j_t\big) , \quad i = 1, \dots, m,
\]
with initial condition $S^i_0=s^i_0>0$. The vector process $b = (b_t^1, \dots, b_t^m)^*$ of mean rates of return is assumed  to be $\F^B$-progressively measurable and $L$-Lipschitz uniformly in time, i.e. 
\begin{equation}\label{eq:bLip}
\textstyle |b^i_t(\omg^1) - b^i_t(\omg^2) | \leq L \sum_{k=1}^d\,\, \sup_{0 \leq s \leq t}  |\omg^{1,k}_s - \omega^{2,k}_s| \quad \forall \, t, i \text{ and } \forall \, \omg^1,\omg^2 \in \C^d.
\end{equation}
The $m\times d$ volatility matrix $\textstyle \sigma_t = (\sigma^{i,j}_t)_{1\leq i \leq m, 1 \leq j \leq d} $  has full rank, it is $\F^B$-progressively measurable and $M$-Lipschitz uniformly in time, i.e. 
\begin{equation}\label{eq:sLip}
\textstyle   |{\sigma}^{i,j}_t(\omg^1) - {\sigma}^{i,j}_t(\omg^2) | \leq M \sum_{k=1}^d \,\,\sup_{0 \leq s \leq t}  |\omg^{1,k}_s - \omega^{2,k}_s| \quad  \forall\, t, i,j \text{ and } \forall \, \omg^1,\omg^2 \in \C^d,
\end{equation}
and there exists some constant $C$ s.t.\ 
$|\sigma^{i,j}_t|\leq C$ for each time $t$ and for any $i,j$ .
We denote by $\lambda^i_t$ the proportion of an agent's wealth invested in the $i$th stock at time $t$ ($1 \leq i \leq m$), the remaining proportion $1 - \sum_{i=1}^m \lambda^i_t$ being invested in the bond. We shall forbid short-selling of stocks and bond, which corresponds to the constraint $\lambda_i \in [0,1] $ for all $i$, and $\sum_{i=1}^m \lambda^i_t \leq 1$. We write $\lambda \in \mathbb{A}$ for this constraint; the case of arbitrary compact-convex constraints can be treated in the same way.
Let $\A({ \G^B})$ and $\A(\F^B)$ be the sets of \emph{admissible portfolios} for the agent with and without anticipative information, i.e.\ the sets of ${\G^B}$-, respectively $\F^B$-progressively measurable $\mathbb{A}$-valued processes $(\lambda_t)_{t\in[0,T]}$.
Denoting by $X^\lambda$ the wealth process corresponding to a portfolio $\lambda$ and starting from a unit of capital, we have $ \textstyle  
dX^\lambda_t=X^\lambda_t \lambda^*_t[b_t dt+ \sigma_t dB_t]$, that is,
\[ \textstyle
X^\lambda_t= \exp\left(\int_0^t(\lambda^*_s b_s -\frac12 || \sigma_s^* \lambda_s ||^2)ds+\int_0^t \lambda^*_s \sigma_s dB_s
\right).
\]
The above expression makes sense for portfolios in $\A({ \G^B})$ by Assumption~\ref{ass:Bum}. We also need:

\begin{assumption}\label{assump:utility}
The utility function $U:\R_+\to\R$ is concave, increasing, and such that, for some $K\in\R_+$, we have $F:=U\circ exp$ is K-Lipschitz, concave and increasing.
\end{assumption}
We remark that this assumption is fulfilled e.g.\ by $U$ negative power utility $U(x)=\frac{x^a}{a}$ for $a\leq0$, or logarithmic utility $U(x)=ln(x)$, or exponential utility $U(x)=-\frac{1}{a}e^{-ax}$ for $a\geq 1$. The function $F$ is 1-Lipschitz for the first two examples, and $e^{-a}$-Lipschitz for the last one.  

The utility maximization problem without anticipation of information is then given by
\begin{align*}
\label{UF}\tag{{ $U(\F)$}}\textstyle
{v^\F}=\sup\limits_{\lambda \in \A(\F^B)} \EXP[U(X^\lambda_T)].
\end{align*}
We proceed to compare this value with the following problem under anticipation of information:
\begin{align*}
\label{UG}\tag{{ $U(\G)$}}\textstyle
{ v^\G}=\sup\limits_{\lambda \in \A({\G^B})} \EXP[U(X^\lambda_T)],
\end{align*} 
hence obtaining a bound on the price of information relative to the risk-attitude encoded by $U$.

\begin{proposition}\label{prop: ut}
The difference between the value functions of informed and uninformed agents can be bounded as follows (with the convention $+\infty -\infty = 0$)
\begin{align}\label{bound utility}\textstyle
0\leq {v^\G -v^\F}
 \leq \tilde{K} \inf\limits_{\pi\in \Pi^{\F,\G}(\gamma,\gamma)} \EXP^{\pi} [ V_T(\sw - \omega)],
\end{align}
for some explicit constant $\tilde{K}$, see \eqref{eq:cnst}.
\end{proposition}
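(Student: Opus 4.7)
The plan is, first, to observe that $v^\F\leq v^\G$ since $\A(\F^B)\subseteq\A(\G^B)$. For the upper bound, I would fix $\epsilon>0$, take a near-optimal $\lambda\in\A(\G^B)$ with $\EE[U(X_T^\lambda)]\geq v^\G-\epsilon$, and identify it with a $\G$-progressive $\mathbb{A}$-valued process on $(\C^d,\gamma)$. For each causal coupling $\pi\in\Pi^{\F,\G}(\gamma,\gamma)$ I would work on the product space $(\C^d\times\C^d,\pi)$ with coordinates $(\omega,\sw)$; by the multidimensional analog of Lemma~\ref{lem: causal wiener equivalence}, $\omega$ is a Brownian motion in $\F^\omega\vee\G^\sw$ under $\pi$. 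The next step is to introduce the causal projection $\tilde\lambda_t(\omega):=\EE^\pi[\lambda_t(\sw)\,|\,\F^\omega_t]$, which is $\F^\omega$-progressive and $\mathbb{A}$-valued by convexity of $\mathbb{A}$. Since $\lambda_t(\sw)$ is $\G^\sw_t$-measurable, the conditional-independence characterization of causality in Remark~\ref{rem: causal} yields the key identity $\EE^\pi[\lambda_t(\sw)\,|\,\F^\omega_T]=\EE^\pi[\lambda_t(\sw)\,|\,\F^\omega_t]=\tilde\lambda_t(\omega)$ $\pi$-a.s.

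The main engine of the argument will be a conditional Jensen inequality. For a generic $\mathbb{A}$-valued progressive $\mu$ and $\chi\in\{\omega,\sw\}$, set $L_T^\mu(\chi):=\int_0^T\mu^*b(\chi)ds-\tfrac12\int_0^T\|\sigma^*(\chi)\mu\|^2ds+\int_0^T\mu^*\sigma(\chi)d\chi$. The point is that, for fixed $\omega$, the functional $\mu\mapsto L_T^\mu(\omega)$ is concave (the drift and the stochastic integral are linear in $\mu$, and the quadratic term is concave), while $F:=U\circ\exp$ is concave and nondecreasing by Assumption~\ref{assump:utility}. Applying conditional Jensen on $\F^\omega_T$ to the random strategy $\lambda(\sw)$ and using the identity above, one obtains $\EE^\pi[F(L_T^\lambda(\omega))\,|\,\F^\omega_T]\leq F(L_T^{\tilde\lambda}(\omega))$. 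Integrating out and using $\omega\sim\gamma$ under $\pi$ gives $\EE^\pi[F(L_T^\lambda(\omega))]\leq\EE^\gamma[U(X_T^{\tilde\lambda})]\leq v^\F$, whereas $\EE^\pi[F(L_T^\lambda(\sw))]=\EE^\gamma[U(X_T^\lambda)]\geq v^\G-\epsilon$. Combining with the $K$-Lipschitzness of $F$ yields
\[
v^\G-v^\F\leq\epsilon+K\,\EE^\pi\!\left[\,|L_T^\lambda(\sw)-L_T^\lambda(\omega)|\,\right].
\]

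Finally, I would dominate $\EE^\pi[|L_T^\lambda(\sw)-L_T^\lambda(\omega)|]$ by an explicit multiple of $\EE^\pi[V_T(\sw-\omega)]$. The drift and quadratic-variation differences are controlled pathwise via the Lipschitz assumptions~\eqref{eq:bLip}--\eqref{eq:sLip} together with the bounds on $\lambda,\sigma$, producing $\mathrm{const}\cdot\|\sw-\omega\|_T\leq\mathrm{const}\cdot V_T(\sw-\omega)$. For the stochastic integral difference, on the full-measure set $\{V_T(\sw-\omega)<\infty\}$ I would decompose $d\sw=d\omega+d(\sw-\omega)$ to write
\[
\int_0^T\lambda^*\sigma(\sw)d\sw-\int_0^T\lambda^*\sigma(\omega)d\omega=\int_0^T\lambda^*[\sigma(\sw)-\sigma(\omega)]d\omega+\int_0^T\lambda^*\sigma(\sw)d(\sw-\omega),
\]
where the first piece is a $\pi$-martingale in $\F^\omega\vee\G^\sw$ whose $L^1$-norm is dominated via Burkholder--Davis--Gundy and the Lipschitz bound on $\sigma$ by $\mathrm{const}\cdot\EE^\pi[\|\sw-\omega\|_T]$, and the second piece is a pathwise Stieltjes integral bounded by $C_{\mathbb{A}}C\,V_T(\sw-\omega)$. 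Assembling explicit constants produces the $\tilde K$ in the statement; infimizing over $\pi$ and letting $\epsilon\to0$ concludes. The hardest part will be justifying the conditional Jensen step---namely, the concavity of the random functional $\mu\mapsto L_T^\mu(\omega)$ in the process variable $\mu$ combined with the causality-derived identity $\EE^\pi[\lambda(\sw)\,|\,\F^\omega_T]=\tilde\lambda(\omega)$, which is precisely what makes the projection trick work in this linear--quadratic control setting.
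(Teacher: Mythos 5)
Your proposal is correct and follows essentially the same route as the paper's proof: project the anticipative strategy via the causal (optional) projection onto $\F$, use the causality identity $\EE^\pi[\lambda_t(\sw)\,|\,\F_t]=\EE^\pi[\lambda_t(\sw)\,|\,\F_T]$ together with Jensen's inequality for the concave increasing $F$, then control $|L_T^\lambda(\sw)-L_T^\lambda(\omega)|$ by the Lipschitz bounds on $b,\sigma$, the pathwise Stieltjes estimate for $\int\lambda^*\sigma(\sw)\,d(\sw-\omega)$, and a Burkholder--Davis--Gundy bound for the martingale piece $\int\lambda^*[\sigma(\sw)-\sigma(\omega)]\,d\omega$ (whose martingale property rests, as in the paper, on $\omega$ being a martingale under $\pi$ in the product filtration by causality). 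The step you single out as hardest is exactly the one the paper handles the same way, so there is no gap.
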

We recall that by the total variation of an $\R^d$-valued process $X$ we mean the sum of total variations of its components, i.e.\ $V_T(X) = \sum_{i=1}^d V_T(X_i)$. Thanks to the multidimensional version of Theorem~\ref{thm: DM_new} (see Remark~\ref{rem multi}), we have that if the causal problem in \eqref{bound utility} is finite, then its value equals $\EE^\gamma\left[V_T(A)\right]$.
The appeal of \eqref{bound utility} is that in principle one need not know the specific form of the process $A$.
\begin{proof} In the case ${ v^\F}=\infty$ we do not have anything to prove, hence we assume ${ v^\F}$ to be finite.
In the path space $\C^d$, the expected utility from terminal wealth for the agents with and without anticipative information is given by
\[\textstyle	\EE^\gamma\left[F\left(\int_0^T(\lambda^*_tb_t-\frac12 || \sigma_t^* \lambda_t ||^2)dt+\int_0^T \lambda^*_t \sigma_t d\omega_t,
\right)\right]
\]
where $\lambda$ is $\G$- and $\F$-progressively measurable, respectively.

We now fix a causal transport $\pi\in \Pi^{\F,\G}(\gamma,\gamma) \subset \P(C^d \times \C^d)$ for which the total variation $V_T(\sw - \omega)$ is $\pi$-a.s.\ finite, and consider \eqref{UF} to be solved in the $\omega$ variable and \eqref{UG} in the $\sw$ variable. Assume ${ v^\G} < \infty$ and that there is an optimizer $\hat\lambda(\sw) =(\hat\lambda^1(\sw), \dots,\hat\lambda^m(\sw))$ for problem \eqref{UG} (else one can argue in the same way for every element $\lambda^n$ of a sequence such that $\EE[U(X_T^{\lambda^n})]\to { v^\G}$ for $n\to\infty$).
We denote by $\widetilde\lambda =( \widetilde\lambda^1,\dots,\widetilde\lambda^m) $ its optional projection with respect to $(\pi,\F\otimes\{\emptyset,\C\})$, so that in particular 
$$\textstyle \widetilde\lambda^i_t(\omega)=
\widetilde\lambda^i_t(\omega,\sw)=\EE^\pi[\hat\lambda^i_t(\sw)|\F_t] = \EE^\pi[\hat\lambda^i_t(\sw)|\F_T], \,\, i=1\dots,m,$$ (for simplicity, here and in what follows, we use the notation $\EE^\pi[.|\F_t]$ for $\EE^\pi[.|\F_t\otimes\{\emptyset,\C\}]$). The last equality follows by causality and is crucial for the next argument. Note 
that $\widetilde\lambda\in\A(\F^B)$, which yields
\begin{eqnarray*}\textstyle
{ v^\F} &\geq&\textstyle \EE^\gamma\Big[F\Big(		\int_0^T(\widetilde\lambda^*_t(\omega)b_t(\omega)- \frac12 || \sigma_t^*(\omg)  \widetilde\lambda_t(\omega) ||^2 )dt+\int_0^T \widetilde\lambda^*_t(\omega) \sigma_t(\omg) d\omega_t\Big)\Big]\\
&\geq & \textstyle\EE^\pi\Big[F\Big( \EE^\pi\Big[
	\int_0^T(\hat\lambda^*_t(\sw)b_t(\omega)- \frac12 || \sigma_t^*(\omg)  \hat\lambda_t(\sw) ||^2 )dt+\int_0^T \hat\lambda^*_t(\sw) \sigma_t(\omg) d\omega_t  \Big| \F_T \Big ]
\Big)\Big] \\
&\geq& \textstyle\EE^\pi\Big[F\Big(	\int_0^T(\hat\lambda^*_t(\sw)b_t(\omega)-\frac12  || \sigma_t^*(\omg)  \hat\lambda_t(\sw) ||^2)dt+\int_0^T \hat\lambda^*_t(\sw) \sigma_t(\omg) d\omega_t \Big)\Big]  ,
\end{eqnarray*}
by Jensen's inequality, being $F$ concave and increasing, and by causality.
Therefore, by Lipschitz continuity of the function $F$, we have
\begin{multline*}\textstyle
0   \leq  { v^\G- v^\F} \leq \textstyle K\EE^\pi\Big[\Big|	\int_0^T \hat\lambda^*_t(\sw) \left(b_t(\sw) - b_t(\omega) \right)  dt -\frac{1}{2}\int_0^T \big( || {\sigma}_t^*(\sw)  \hat\lambda_t(\sw) ||^2 - || \sigma_t^*(\omg)  \hat\lambda_t(\sw) ||^2 \big)  dt 
\\ \textstyle+  \int_0^T \hat\lambda^*_t(\sw) \left( {\sigma}_t(\sw)   d\sw_t - \sigma_t(\omg) d\omega_t\right) 
\Big|\Big]
\end{multline*}
\begin{multline*}
 \leq \textstyle	K\EE^\pi\Big[\int_0^T|  \hat\lambda^*_t(\sw) ( b_t(\sw) - b_t(\omega))|  dt  +  \Big|  \int_0^T  \hat\lambda^*_t(\sw)  \left( {\sigma}_t(\sw) d\sw_t-\sigma_t(\omg) d\omega_t \right) \Big|   
\\ + \textstyle \frac{1}{2}\int_0^T \sum_{j=1}^d \Big[  \sum_{i=1}^m | \hat\lambda^i_t(\sw)|\  
|{\sigma}_t^{i,j}(\sw) -{\sigma}_t^{i,j}(\omg)| \Big]  \Big[  \sum_{i=1}^m |\hat\lambda^i_t(\sw)|\ 
|{\sigma}_t^{i,j}(\sw) +{\sigma}_t^{i,j}(\omg)| \Big]  dt     \Big]
 \end{multline*}
\begin{multline}\label{bla}
 \leq \textstyle K\EE^\pi\Big[\int_0^T \sum_{i=1}^m \left| b^i_t(\sw) - b^i_t(\omega)\right|   dt  +  \Big|  \int_0^T  \hat\lambda^*_t(\sw)   {\sigma}_t(\sw) ( d\sw_t-  d\omega_t ) \Big|  +  \Big|  \int_0^T  \hat\lambda^*_t(\sw)  ( {\sigma}_t(\sw)  - \sigma_t(\omg)) d\omega_t  \Big|  \\ \textstyle + 
 \frac{1}{2}\int_0^T \sum_{j=1}^d \left[  \sum_{i=1}^m 
|{\sigma}_t^{i,j}(\sw) -{\sigma}_t^{i,j}(\omg)| \right]  \left[  \sum_{i=1}^m  |{\sigma}_t^{i,j}(\sw) +{\sigma}_t^{i,j}(\omg)| \right]  dt    
 \Big].
\end{multline}
Now, by \eqref{eq:bLip} and since $\sw_0 -\omega_0 = 0$ on $\C^d$, we have that $\pi$-a.s.
\begin{equation*}\textstyle
\int_0^T\sum\limits_{i=1}^m \left| b^i_t(\sw) - b^i_t(\omega)\right|dt \leq L T m \sum\limits_{k=1}^d \,\,\sup\limits_{0 \leq t \leq T}  \big|  \sw^k_t -   \omega^k_t \big| \leq LTm \sum\limits_{k=1}^d V_T(\sw^k -  \omega^k) = LTm V_T(\sw -  \omega).
\end{equation*}
The second term in \eqref{bla} is easily bounded $\pi$-a.s.\ as follows
\begin{equation*}\textstyle
 \Big|  \int_0^T  \hat\lambda^*_t(\sw)  {\sigma}_t(\sw) (d\sw_t-  d\omega_t ) \Big| =  \sum_{j=1}^d\sum_{i=1}^m  \Big| \int_0^T  \hat{\lambda}^i_t(\sw)\sigma^{i,j}_t(\sw)(d\sw^j_t-  d\omega^j_t )  \Big| \leq  C m V_T(\sw -  \omega).
\end{equation*}
Finally we consider the third term in \eqref{bla}, and denote $ X_t = \int_0^t  \hat\lambda^*_s(\sw)  \left( {\sigma}_s(\sw)  - \sigma_s(\omg) \right) d\omega_s  $. This is a martingale under $\pi$, because by causality the coordinate process $\omega$ is an $\F \otimes \G$-martingale. Hence, we can apply the Burkholder-Davis-Gundy inequality, obtaining 

\begin{align*} 
\textstyle  \EXP^\pi[|X_T|] 
 & \leq \textstyle   C_1 \EXP^\pi\Big[\sqrt{\langle X,X \rangle_T}\Big]\\ 
 &\textstyle = \textstyle C_1 \EXP^\pi\left[\sqrt{\int_0^T \sum_{j=1}^d\Big | \sum_{i=1}^m \hat{\lambda}^i_t(\sw)(\sigma^{i,j}_t(\sw)-\sigma^{i,j}_t(\omega))\Big |^2 dt} \right]\\
 & \textstyle \leq  \textstyle  C_1 \sqrt{m}\, \EXP^\pi\left[ \sqrt{ \int\limits_0^T \sum\limits_{j=1}^d \sum\limits_{i=1}^m | \sigma^{i,j}_t(\sw)-\sigma^{i,j}_t(\omega)|^2 dt} \right]\\
 &\leq \textstyle C_1 M \sqrt{m d} \, \EXP^\pi\left[ \sqrt{ \int\limits_0^T \Big \{ \sum_{k=1}^d \sup_{0\leq s\leq t} | \sw^k_s-\omega^k_s |\Big \}^2 dt}
 \right]\\
& \leq \textstyle  C_1 M \sqrt{T m d} \, \EXP^\pi\left[ \sum_{k=1}^d \sup_{0\leq s\leq T} | \sw^k_s-\omega^k_s |\right] \\
& \textstyle \leq C_1 M \sqrt{T m d}\, \EE^\pi\left[V_T(\sw -  \omega) \right].
\end{align*}
Hence, the difference { $v^\G-v^\F$} is bounded above by
$\tilde{K}  \EE^\pi[V_T(\sw - \omega)]$, with 
\begin{equation}\label{eq:cnst}
\tilde{K}  = K(LTm +  m^2 d C M T +C m +  C_1 M \sqrt{T dm}).
\end{equation}
Since $\pi$ was a generic transport in $\textstyle\Pi^{\F,\G}(\gamma,\gamma)$, this concludes the proof for the case ${v^\G} < \infty$.
The case ${ v^\G} =\infty$ follows similarly, working along a sequence $\lambda^n$ s.t.\ $\EE[U(X_T^{\lambda^n})]\to \infty$.
\end{proof}

{
\begin{remark}
In the proof of Proposition~\ref{prop: ut}, the only place where we use the fact that one filtration is bigger than the other is to state that the difference of the two utility maximization problems is non-negative. Thus all estimates in \eqref{bound utility}, except the leftmost one, can be obtained in the same way for agents with any sets of informations (non-necessarily one bigger than the other).
\end{remark}
}

\subsection{Optional projections in causal transport}
\label{sec project causal}
In Sections~\ref{sec semimart Causal}, \ref{sec opt stop} and \ref{sect: utmax}, we intensively used projections and dual projections, which we briefly recall in~Appendix~\ref{app proj}. 
Here $\F$ and $\G$ are any two filtrations on $\C$, and for a process $M$ we denote by $^oM^{\F}$ (resp.\ $^{\F}M^o$) { its} optional projection (resp.\ dual optional projection) of $M$ with respect to $(\pi,\{\F\otimes \{\emptyset,\C\}\}^{\pi})$.
The essential difference between $^o\Lambda^{\F}$ and $^{\F}\Lambda^o$, as explained in \cite[Remark~VI.74-(c)]{DellacherieMeyerB}, is that while the first one formalizes $\EE[\Lambda_t|\F_t]$, the second one does so to $\int_0^t\EE[d\Lambda_s|\F_s]$. Causality { imposes a strong} relation between the two kinds of projections:
\begin{eqnarray*}\textstyle
	^{\F}\Lambda^o_t -\, ^{\F}\Lambda^o_0 &=& \textstyle\int_0^t\EE^{\pi}[d\Lambda_s|\F_s] = \int_0^t\EE^{\pi}[d\Lambda_s|\F_t] = \EE^\pi[\int_0^td\Lambda_s |\F_t]\\ &=&\textstyle \EE^{\pi}[\Lambda_t -\Lambda_0|\F_t] = \,
	^o\Lambda^{\F}_t -\, ^o\Lambda^{\F}_0.
\end{eqnarray*}
This results is formalised in the following proposition, and was crucial in our applications to optimal stopping problems in Section~\ref{sec opt stop}. 
Such a phenomenon is not symmetric, just as causality, i.e., one does not expect Proposition~\ref{prop crucial} to hold for projections w.r.t. $\G$. 
\begin{proposition}
\label{prop crucial}
Let $\pi\in \Pi^{\F ,\G }(\mu, \nu)$ be a causal transport plan, and let $\Lambda$ be an $(\F\otimes\G)$-adapted, integrable variation \cadlag\ process with $\Lambda_0=0$. Then $$^{\F}\Lambda^o\,\,\mbox{ is $\pi$-indistinguishable from } \,\, ^o\Lambda^{\F}.$$
\end{proposition}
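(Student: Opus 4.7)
My plan is to realise both projections simultaneously as a single explicit process and then conclude by uniqueness. Writing $\pi(d\omega,d\sw)=\mu(d\omega)\,\pi^\omega(d\sw)$ for the disintegration of $\pi$ along the first coordinate, I introduce the candidate
\[
Z_t(\omega)\;:=\;\int_{\C}\Lambda_t(\omega,\sw)\,\pi^\omega(d\sw)\;=\;\EE^\pi[\Lambda_t\mid\F_T\otimes\{\emptyset,\C\}](\omega).
\]
The goal is to prove that $Z$ is $\pi$-indistinguishable from both ${}^o\Lambda^{\F}$ and ${}^{\F}\Lambda^o$.

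The crucial first step is to verify that $Z$ is $\{\F\otimes\{\emptyset,\C\}\}^\pi$-optional, and this is exactly where causality enters. Since $\Lambda_t$ is $(\F_t\otimes\G_t)$-measurable, a monotone class argument starting from tensor functions $f(\omega)g(\sw)$ with $f\in\F_t^\mu$ and $g\in\G_t$ shows that $Z_t$ is $\F_t^\mu$-measurable: indeed, by Definition~\ref{def:causality}, the map $\omega\mapsto\int g(\sw)\pi^\omega(d\sw)$ is $\F_t^\mu$-measurable for every bounded $\G_t$-measurable $g$, so $Z_t=\int f\cdot g\,d\pi^\omega=f\cdot\int g\,d\pi^\omega$ inherits $\F_t^\mu$-measurability on product cylinders and then on the whole class by approximation. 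Dominated convergence (with $|\Lambda_t|\le V_T(\Lambda)\in L^1(\pi)$) upgrades this to c\`adl\`ag-ness of $Z$, and integrable variation follows from $V_T(Z)(\omega)\le\int V_T(\Lambda)(\omega,\sw)\pi^\omega(d\sw)$, which is $\pi$-integrable.

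With $Z$ so realised, the identification with the optional projection is a one-line application of the tower property,
\[
{}^o\Lambda^{\F}_t=\EE^\pi[\Lambda_t\mid\F_t]=\EE^\pi\big[\EE^\pi[\Lambda_t\mid\F_T\otimes\{\emptyset,\C\}]\,\big|\,\F_t\big]=\EE^\pi[Z_t\mid\F_t]=Z_t\quad\text{a.s.,}
\]
the last equality using the $\F_t^\mu$-measurability of $Z_t$ established above; right-continuity of $Z$ and ${}^o\Lambda^{\F}$ upgrades this pointwise-in-$t$ identity to $\pi$-indistinguishability. For the dual optional projection, given any bounded $\{\F\otimes\{\emptyset,\C\}\}^\pi$-optional $X$ (which depends only on $\omega$), a Fubini-type swap for signed Stieltjes measures applied to the disintegration yields
\[
\EE^\pi\!\left[\int_0^T X_t\,dZ_t\right]=\EE^\mu\!\left[\int_\C\pi^\omega(d\sw)\int_0^T X_t(\omega)\,d\Lambda_t(\omega,\sw)\right]=\EE^\pi\!\left[\int_0^T X_t\,d\Lambda_t\right].
\]
Together with the optionality and integrable variation of $Z$, uniqueness of the dual optional projection delivers $Z={}^{\F}\Lambda^o$ up to $\pi$-indistinguishability, whence ${}^o\Lambda^{\F}=Z={}^{\F}\Lambda^o$.

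The main obstacle is the measurability step. Without causality the object $Z_t$ is only $\F_T^\mu$-measurable for each $t$ and not $\F_t^\mu$-measurable, so the identification with either projection breaks down. Informally, the coincidence of the two projections claimed by the proposition is the manifestation, at the level of projections, of the fact that under causality the increments of $\Lambda$ have identical $\F_s$- and $\F_t$-conditional expectations whenever $s\le t$---precisely the second equality in the informal chain displayed just before the statement.
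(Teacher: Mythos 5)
Your proof is correct, but it takes a genuinely different route from the paper's. You construct the common process explicitly: $Z_t(\omega)=\EE^\pi[\Lambda_t\mid\F_T\otimes\{\emptyset,\C\}](\omega)=\int\Lambda_t(\omega,\sw)\,\pi^\omega(d\sw)$, use causality exactly in the form of Definition~\ref{def:causality} (measurability of $\omega\mapsto\pi^\omega(A)$ for $A\in\G_t$) plus a monotone class argument to make $Z$ adapted, and then identify $Z$ with ${}^o\Lambda^{\F}$ via the tower property at deterministic times and with ${}^{\F}\Lambda^o$ via a Fubini swap for the signed Stieltjes kernels and uniqueness of the dual optional projection. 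The paper never constructs such a process: it fixes $t$, tests against the indicator $X=\IND_{\{{}^o\Lambda_t>\Lambda^o_t\}}$, and exploits causality through its martingale reformulation (Remark~\ref{rem: causal}, the $H$-hypothesis) to conclude that the $(\F\otimes\{\emptyset,\C\})$-optional projection of $X$ is also the $(\F\otimes\G)$-optional projection, so that $\EE^\pi[\Lambda_tX_t]$ can be computed both as $\EE^\pi[{}^o\Lambda_tX_t]$ and as $\EE^\pi[\Lambda^o_tX_t]$; this yields equality at each fixed $t$ by a positivity argument. Your approach buys an explicit formula for the common projection and makes the role of causality (adaptedness of the disintegrated conditional expectation) completely transparent, at the cost of handling the disintegration, a signed-kernel Fubini, and the uniqueness of the dual projection; the paper's approach stays entirely within projection calculus and avoids kernels, at the cost of being less constructive. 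Note that both arguments share the same final reliance: passing from a.s.\ equality at each fixed $t$ to indistinguishability requires that ${}^o\Lambda^{\F}$ (not just $Z$ or $\Lambda^o$) admits a \cadlag\ version — this holds here because $|\Lambda_t|\le V_T(\Lambda)\in L^1(\pi)$, and it is exactly what the paper invokes via \cite[Theorem~VI.47]{DellacherieMeyerB} — so your appeal to right-continuity of the optional projection should be flagged as using that same fact rather than being automatic.
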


\begin{proof}
We drop the superscript $\F$ to simplify the notation.
Fix $t\in[0,T]$, and consider { the process $X=(X_s)_{s\in[0,t]}$ with constant paths given by}
\[
X_s=\IND_{\{^o\Lambda_t>\Lambda_t^o\}},\qquad s\in[0,t].
\]
Its optional projection with respect to $(\pi,\{\F\otimes \{\emptyset,\C\}\}^{\pi})$ satisfies
\begin{equation*}\label{eq:mgc}
^oX_s=\pi[^o\Lambda_t>\Lambda_t^o|\{\F_s\otimes \{\emptyset,\C\}\}^{\pi}],\qquad s\in[0,t].
\end{equation*}
Note that $(^oX_s)_{s\in[0,t]}$ is an $\{\F\otimes \{\emptyset,\C\}\}^{\pi}$-martingale, hence $\pi$-indistinguishable from an $(\F\otimes \{\emptyset,\C\})$-martingale, by \cite[Lemma~7, Appendix~I]{DellacherieMeyerB}, which is then also an $(\F\otimes \G)$-martingale by causality; see Remark~\ref{rem: causal}. { This means that $(^oX_s)_{s\in[0,t]}$ 
is the \cadlag\ version of the $(\F\otimes \G)$-martingale $M_s=\pi[^o\Lambda_t>\Lambda_t^o|\F_s\otimes\G_s]$}, $0\leq s\leq t$, thus $\pi$-indistinguishable from the optional projection of $X$ w.r.t.\ $(\pi,\{\F\otimes\G\}^\pi)${, which we denote by $Y$}.

Now, by definition of optional projection,
\[
\EE^\pi[\Lambda_tX_t]=\EE^\pi[^o\Lambda_tX_t].
\]
On the other hand, since $X$ is constant and $\Lambda_0=0$, 
\[
\EE^\pi[\Lambda_tX_t] = \EE^\pi[\Lambda_tX_0]= \EE^\pi\left[\int_0^tX_sd\Lambda_s\right].
\]
Now, using \cite[Remark~VI.58-(d)]{DellacherieMeyerB} and the fact that $\Lambda$ is an $(\F\otimes\G)$-adapted integrable variation process, we have that
$\EE^\pi\left[\int_0^tX_sd\Lambda_s\right]=
\EE^\pi\left[\int_0^tY_sd\Lambda_s\right]$.
Therefore we have
\begin{eqnarray*}\textstyle
\EE^\pi[\Lambda_tX_t] &=&\textstyle \EE^\pi\left[\int_0^tY_sd\Lambda_s\right]=\textstyle \EE^\pi\left[\int_0^t{}^oX_sd\Lambda_s\right]
=\EE^\pi\left[\int_0^tX_sd\Lambda^o_s\right]\\
&=&\EE^\pi\left[\Lambda^o_t \,X_t\right]-
\EE^\pi\left[\Lambda^o_0\, X_t\right]=\textstyle\EE^\pi\left[\Lambda^o_t\, X_t\right],
\end{eqnarray*}
where in the second equality we use that $^oX$ and $Y$ are $\pi$-indistinguishable, and in the third one the definition of dual optional projection (recall $\Lambda^o_0=0$, see Appendix~\ref{app proj}). Altogether we have 
\[ \textstyle
\EE^\pi[^o\Lambda_t\IND_{\{^o\Lambda_t>
\Lambda_t^o\}}]=
\EE^\pi[\Lambda_t^o\IND_{\{^o\Lambda_t>
\Lambda_t^o\}}],
\]
hence $\{^o\Lambda_t>\Lambda_t^o\}$ is $\pi$-negligible. Arguing similarly, we get  that $\{^o\Lambda_t\neq\Lambda_t^o\}$ is $\pi$-negligible. Since this is true for all $t\in[0,T]$, we have that  $^o\Lambda$ and $\Lambda^o$ are versions of each other and hence $\pi$-indistinguishable since both are \cadlag\ (see \cite[Theorem~VI.47]{DellacherieMeyerB}).
\end{proof}

After presenting this work, we learned about the preprint~\cite{AL16}, where the following is proved: if two filtrations $\A^1\subseteq\A^2$ satisfy the $H$-hypothesis, then for any $\A^2$-optional process of integrable variation, its $\A^1$-optional and $\A^1$-dual optional projections coincide. Thanks to Remark~\ref{rem: causal}, Proposition~\ref{prop crucial} follows by \cite[Theorem~2]{AL16}.

\section{Attainability and Duality}\label{sect:dual}

\subsection{Classical and constrained transport: an extension}\label{sect:dualcl}
In this section we consider the general abstract setting of Section \ref{sec sub abstract}.
 As we have seen in previous sections, it is important to obtain attainability and duality results for \eqref{eq classical transport}, and more specifically \eqref{eq causal transport}, when the cost function $c$ is Borel measurable with values in the extended real line $(-\infty,+\infty]$; e.g.\ for Cameron-Martin or total-variation costs. For such problems there is no systematic theory, and indeed \cite[Example 4.1]{MathiasWalter} shows that duality may fail in such a setting. Fortunately, the cost functions we shall encounter in this article have a strong structural property. Assuming this property will allow us to prove attainability/duality results for \eqref{eq classical transport}-\eqref{eq causal transport} in a simple and self-contained way. The next result can also be proven via the following argument: on every Polish space, there is a finer Polish  topology having the same Borel sets, for which a given real-valued Borel function becomes continuous. This argument, however, would not lead us to prove Corollary \ref{coro constraints} below, nor help us studying our ultimate object, namely \eqref{eq causal transport}. So we rather give our own arguments below.

\begin{proposition}\label{prop extension Kellerer-Mathias}
Let $f:\X\to\R$ and $g:\Y\to\R$ be bounded Borel functions and $\tilde{c}:\X\times\Y\to(-\infty,\infty]$ lower semicontinuous and bounded from below. Suppose that either $f$ or $g$ is further continuous, and define $$c(x,y):=\tilde{c}(x,y)+ f(x)g(y).$$ Then the optimal transport problem \eqref{eq classical transport} corresponding to the cost $c$ is attained. Furthermore, there is no duality gap:
\begin{equation*}\label{eq classical transport product}
\textstyle
\inf\limits_{\pi\in\Pi(\mu,\nu)}\EXP^{\pi} [\, c \, ] = \sup\limits_{\substack{\phi\in\C_b(\X),\,\psi\in\C_b(\Y) \\ \phi\oplus\psi\leq \tilde{c}+ fg }} \left\{\int \phi(x)\mu(dx)+\int \psi(y)\nu(dy) \right \}.
\end{equation*}
\end{proposition}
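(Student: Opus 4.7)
\emph{Attainment.} My plan is to show that $\pi\mapsto\int c\,d\pi$ is weakly lower semicontinuous on the weakly compact set $\Pi(\mu,\nu)$ (compactness by Prokhorov's theorem). The piece $\pi\mapsto\int\tilde c\,d\pi$ is lsc by the standard portmanteau argument for lsc bounded-below integrands. For the bilinear piece, assume without loss of generality $f\in C_b(\X)$: since every plan in $\Pi(\mu,\nu)$ has second marginal the fixed measure $\nu$, Lusin's theorem supplies, for each $\epsilon>0$, $g_\epsilon\in C_b(\Y)$ with $\|g_\epsilon\|_\infty\leq\|g\|_\infty$ and $\nu(\{g\neq g_\epsilon\})<\epsilon$, yielding the uniform estimate $\sup_{\pi\in\Pi(\mu,\nu)}|\int(fg-fg_\epsilon)\,d\pi|\leq 2\|f\|_\infty\|g\|_\infty\epsilon$. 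Each $\pi\mapsto\int fg_\epsilon\,d\pi$ is weakly continuous because $fg_\epsilon\in C_b(\X\times\Y)$, hence the uniform limit $\pi\mapsto\int fg\,d\pi$ is weakly continuous on $\Pi(\mu,\nu)$. Adding the lsc first term gives lsc of the full cost, and the minimum is attained.

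\emph{Reduction to an lsc cost.} For duality my approach is approximation: I would pick, via Lusin--Tietze, $g_n\in C_b(\Y)$ with $\|g_n\|_\infty\leq\|g\|_\infty$ and $\|g-g_n\|_{L^1(\nu)}\to 0$, and consider $c_n:=\tilde c+fg_n$, which is lsc and bounded below. The classical Kantorovich duality for lsc costs (Villani--Kellerer) then gives $v_n:=\inf_\pi\int c_n\,d\pi=d_n$, where $d_n$ denotes the $C_b(\X)\times C_b(\Y)$-dual value for $c_n$. Since $|\int(c-c_n)\,d\pi|\leq\|f\|_\infty\|g-g_n\|_{L^1(\nu)}$ uniformly in $\pi\in\Pi(\mu,\nu)$, we have $v_n\to v:=\inf_\pi\int c\,d\pi$.

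\emph{Upgrading to $C_b$-duality for $c$.} This is the hard part. Given $(\phi_n,\psi_n)\in C_b(\X)\times C_b(\Y)$ that is $1/n$-optimal for the dual of $c_n$, the goal is to modify $\psi_n$ into $\Psi_n\in C_b(\Y)$ with $(\phi_n,\Psi_n)$ feasible for $c$ and $\int\Psi_n\,d\nu-\int\psi_n\,d\nu\to 0$. The naive constant shift $\psi_n-2\|f\|_\infty\|g\|_\infty$ restores feasibility but loses a fixed non-vanishing value, while the pointwise shift $\psi_n-\|f\|_\infty|g-g_n|$ preserves the value but yields only a Borel function. My plan is to interpolate via a continuous switch $\chi_n\in C_b(\Y)$ with $0\leq\chi_n\leq 1$, $\chi_n\geq\mathbf{1}_{\{g\neq g_n\}}$ and $\int\chi_n\,d\nu\to 0$, and set $\Psi_n:=\psi_n-2\|f\|_\infty\|g\|_\infty\chi_n$; on $\{g=g_n\}$ the pair $(\phi_n,\Psi_n)$ is feasible because $\chi_n\geq 0$, and on the complement $\chi_n\geq 1$ absorbs the worst-case discrepancy $|c-c_n|\leq 2\|f\|_\infty\|g\|_\infty$. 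Then $\int\phi_n\,d\mu+\int\Psi_n\,d\nu$ differs from $d_n$ by at most $2\|f\|_\infty\|g\|_\infty\int\chi_n\,d\nu\to 0$, giving $d\geq\lim d_n=v$ and, combined with the trivial $d\leq v$, the no-duality-gap. The principal obstacle is the construction of $\chi_n$: continuity forces $\chi_n\geq 1$ on $\overline{\{g\neq g_n\}}$, which in a generic Polish measure space may be much larger than $\{g\neq g_n\}$ itself. The way around this, which I expect to be the main technical step, is to arrange the Lusin approximation $g_n$ so that $\{g=g_n\}$ contains a closed set whose open complement has $\nu$-null boundary, using that every Polish space admits a countable base of open $\nu$-continuity sets, and then to produce $\chi_n$ via Urysohn's lemma.
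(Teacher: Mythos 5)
Your attainment argument is correct and is in substance the paper's own: the Lusin-based uniform approximation of $\pi\mapsto\EE^\pi[fg]$ over $\Pi(\mu,\nu)$ is exactly the paper's lemma on weak continuity of this functional, and lower semicontinuity of $\pi\mapsto\EE^\pi[\tilde c\,]$ plus weak compactness of $\Pi(\mu,\nu)$ then yields a minimizer.

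The duality half has a genuine gap, and it sits exactly at the step you yourself flag as the main technical one: producing $\chi_n\in\C_b(\Y)$ with $0\le\chi_n\le 1$, $\chi_n\geq \IND_{\{g\neq g_n\}}$ for suitable continuous Lusin approximations $g_n$, and $\int\chi_n\,d\nu\to 0$. This is impossible in general. Take $\Y=[0,1]$, $\nu$ Lebesgue measure, and $g=\IND_C$ with $C$ a fat Cantor set, $\nu(C)=1/2$. For any $g_n\in\C_b(\Y)$ with $\|g_n\|_\infty\le 1$ and any continuous $\chi_n\geq\IND_{\{g\neq g_n\}}$, the function $g_n+2\chi_n$ is a continuous majorant of $g$, so it is $\geq 1$ on $C$, while $g_n-2\chi_n$ is a continuous minorant of $g$, hence $\le 0$ off $C$ and, by continuity and the fact that $C$ is nowhere dense, $\le 0$ on all of $[0,1]$; subtracting gives $\chi_n\ge 1/4$ on $C$ and therefore $\int\chi_n\,d\nu\ge 1/8$ for every $n$, no matter how cleverly the Lusin set is arranged. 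The same example defeats your proposed remedy: a closed $K'$ on which $\IND_C$ is continuous and whose boundary is $\nu$-null must have $\mathrm{int}(K')\subseteq C^c$ (a ball inside $K'$ missing the closed set $K'\cap C^c$ would be a ball inside the nowhere dense set $C$), so $\nu(K')\le 1/2$ and $\nu(K'^c)$ cannot be made small. The structural reason is that your approximation of the cost is not one-sided: replacing $fg$ by $fg_n$ destroys the inequality $c_n\le c$, and restoring dual feasibility by a continuous correction would force $g$ to be squeezed between continuous functions whose gap vanishes in $L^1(\nu)$, which fails whenever the usc and lsc envelopes of $g$ differ on a set of positive $\nu$-measure. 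The paper avoids this entirely by applying the monotone approximation to the other summand: it takes bounded continuous $\tilde c_n\uparrow\tilde c$ and keeps the Borel product term $fg$ untouched, so that $c_n:=\tilde c_n+fg\le c$ and every dual pair feasible for $c_n$ is automatically feasible for $c$; since $c_n$ is bounded Borel (not lsc), duality for $c_n$ is taken from Kellerer's theorem for bounded measurable costs, and the convergence $\inf_\pi\EE^\pi[c_n]\to\inf_\pi\EE^\pi[c]$ is obtained from monotone convergence together with the weak continuity of $\pi\mapsto\EE^\pi[fg]$ — the very lemma you already proved for attainment. If you reroute your duality argument along these lines, your proof closes; as written, the transfer of dual feasibility does not.
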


Before {providing the proof of the above proposition, we introduce} the following lemma.

	\begin{lemma}\label{lem:fhcts}
		Assume that $\X, \Y$ are Polish spaces equipped with Borel probability measures $\mu, \nu$. Let $f:\X\to\R$ and $g:\Y\to\R$ be bounded Borel function, at least one of which is continuous. Then the function
		\[
		\pi \mapsto \EE^\pi[f(x)g(y)]
		\]
		is continuous on $\Pi(\mu,\nu)$ with respect to the weak topology. 
	\end{lemma}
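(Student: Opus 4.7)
Without loss of generality, assume that $g$ is continuous (the argument is symmetric). Let $\pi_n \to \pi$ weakly in $\Pi(\mu,\nu)$. The plan is to approximate the bounded Borel function $f$ by bounded continuous functions in $L^1(\mu)$, exploiting the fact that every $\pi_n$ has first marginal $\mu$, so the $\mu$-approximation controls the integrals against all $\pi_n$ simultaneously.

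The first step is to invoke Lusin's theorem on the Polish (hence Radon) probability space $(\X,\mu)$: for every $\epsilon>0$ there exists $f_\epsilon \in \C_b(\X)$ with $\|f_\epsilon\|_\infty \leq \|f\|_\infty$ and $\mu(\{x: f(x) \neq f_\epsilon(x)\}) < \epsilon$. Then $(x,y) \mapsto f_\epsilon(x) g(y)$ is bounded and continuous on $\X \times \Y$, so weak convergence of $\pi_n$ to $\pi$ yields
\[
\lim_{n\to\infty} \EE^{\pi_n}[f_\epsilon(x) g(y)] = \EE^{\pi}[f_\epsilon(x) g(y)].
\]

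The second step controls the error uniformly in $n$. For any $\pi' \in \Pi(\mu,\nu)$ with first marginal $\mu$,
\[
\bigl|\EE^{\pi'}[f(x) g(y)] - \EE^{\pi'}[f_\epsilon(x) g(y)]\bigr| \leq \|g\|_\infty \int_\X |f - f_\epsilon|\, d\mu \leq 2 \|f\|_\infty \|g\|_\infty \, \epsilon,
\]
since $|f - f_\epsilon| \leq 2\|f\|_\infty$ and the two functions agree off a set of $\mu$-measure less than $\epsilon$. Applying this bound both to each $\pi_n$ and to $\pi$, and combining with the first step, we obtain
\[
\limsup_{n\to\infty} \bigl|\EE^{\pi_n}[f(x)g(y)] - \EE^{\pi}[f(x)g(y)]\bigr| \leq 4 \|f\|_\infty \|g\|_\infty \, \epsilon.
\]
Since $\epsilon > 0$ was arbitrary, this establishes the desired continuity.

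The only subtle point is the use of Lusin's theorem together with the marginal constraint: without fixing the first marginal, a bound in $L^1(\mu)$ on $f - f_\epsilon$ would not control the integral with respect to arbitrary probability measures on $\X \times \Y$. This is precisely what makes the argument work on $\Pi(\mu,\nu)$ rather than on all of $\P(\X \times \Y)$.
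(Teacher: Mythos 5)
Your proof is correct and follows essentially the same route as the paper's: approximate the non-continuous factor by bounded continuous functions that are $L^1$-close with respect to its (fixed) marginal, use the marginal constraint to control the error uniformly over $\Pi(\mu,\nu)$, and pass to the limit on the continuous approximant via weak convergence. The only difference is cosmetic: the paper simply invokes density of $\C_b$ in $L^1(\nu)$, whereas you produce the approximant explicitly via Lusin's theorem (with a Tietze extension), which is just one standard proof of that density.
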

	
	\begin{proof}
		Recall that $\Pi(\mu,\nu)$ is a compact subset of $\P(\X\times\Y)$ with respect to the weak topology. W.l.o.g.\ we assume that $f$ is continuous and let $M\in\R$ be such that $|f|\leq M$, and $g_k:\Y\to\R, k\in\NN$, be a sequence of bounded continuous functions converging to $g$ in $L^1(\nu)$.
		Consider a sequence of measures $(\pi_n)_n\subseteq \Pi(\mu,\nu)$ such that $\pi_n$ converges weakly to $\pi$ for some $\pi\in\Pi(\mu,\nu)$.
		For any $\epsilon>0$, take $k(\epsilon)$ such that $\|g-g_k\|_{L^1(\nu)}\leq\epsilon/(2M+1)$ for all $k\geq k(\epsilon)$, and take $n(\epsilon)$ such that $|\EE^\pi[fg_{k(\epsilon)}]-\EE^{\pi_n}[fg_{k(\epsilon)}]|\leq\epsilon/(2M+1)$ for all $n\geq n(\epsilon)$. Then, for all $n\geq n(\epsilon)$,
		\begin{eqnarray*}
			\left|\EE^\pi[fg]-\EE^{\pi_n}[fg]\right|&\leq& \EE^\pi[|f||g-g_{k(\epsilon)}|]+\left|\EE^\pi[fg_{k(\epsilon)}]-\EE^{\pi_n}[fg_{k(\epsilon)}]\right|+\EE^{\pi_n}[|f||g-g_{k(\epsilon)}|]\\
			&\leq& M\epsilon/(2M+1)+\epsilon/(2M+1)+
			M\epsilon/(2M+1)\leq\epsilon,
		\end{eqnarray*}
		which proves the desired statement.
	\end{proof}

	\begin{proof}[Proof of Proposition \ref{prop extension Kellerer-Mathias}]
		We employ classical arguments, as in \cite{Villani} or \cite[Sect.~1.3]{MathiasWalter}.
		Since $\tilde c$ is lower semicontinuous, there exists
		a sequence $(\tilde{c}_n)_n$ of bounded continuous functions on $\X\times\Y$ such that $\tilde{c}_n\uparrow \tilde{c}$.  We are going to show that $P(c_n):=\inf_{\pi\in\Pi(\mu,\nu)}\EE^\pi[c_n]$ converges to $P(c):=\inf_{\pi\in\Pi(\mu,\nu)}\EE^\pi[c]$, where $c_n(x,y)=\tilde{c}_n(x,y)+f(x)g(y)$.
		In order to do so, for each $n\in\NN$, we pick $\pi_n\in\Pi(\mu,\nu)$ such that $\EE^{\pi_n}(c_n)\leq P(c_n)+1/n$. Since $\Pi(\mu,\nu)$ is weakly compact, we may assume that $(\pi_n)_n$ converges weakly to some transport plan $\pi\in\Pi(\mu,\nu)$. Then,
		\begin{eqnarray*}
			P(c)&\leq& \EE^\pi[c]=\lim_m\EE^\pi[c_m]=\lim_m(\lim_n\EE^{\pi_n}[c_m])\leq\lim_m(\lim_n\EE^{\pi_n}[c_n])\\
			&=&\lim_n\EE^{\pi_n}[c_n]=\lim_n P(c_n)\leq P(c),
		\end{eqnarray*}
		where we used monotone convergence, Lemma~\ref{lem:fhcts} to ensure that $\EE^\pi[c_m]=\lim_n\EE^{\pi_n}[c_m]$, and the facts that $c_n$ is an increasing sequence with $P(c_n)\leq\EE^{\pi_n}(c_n)\leq P(c_n)+1/n$. This concludes the proof of our claim and actually shows that $\pi$ is an optimizer for the cost $c$ (this easily follows by compactness and Lemma~\ref{lem:fhcts}). 
		The function $c_n$ is Borel bounded, so by \cite[Theorem 2.14]{KellererDuality} we have that duality holds for it. Thus we can pick $(\psi_n,\phi_n)$ such that $\int \psi_nd\mu+\int\phi_nd\nu\geq P(c_n)-1/n$, hence $\sup_n(\int\psi_nd\mu+\int\phi_nd\nu)\geq\lim_nP(c_n)-1/n=P(c)$.
		Since $\psi_n(x)+\phi_n(y)\leq c_n(x,y)\leq c(x,y)$, duality is established.
	\end{proof}
	
{ It is clear} that in Proposition~\ref{prop extension Kellerer-Mathias} one can take $c$ to contain a finite sum of terms of the form $f(x)g(y)$ as described. We give now a corollary of this proposition, dealing with a class of optimal transport problems under linear constraints. It is this result that we shall later apply to the setting of causal optimal transport. We refer to \cite{Zaev,BGriessler} for more on linearly constrained transport problems, but remark that the result below is not a consequence of theirs. 
\begin{corollary}\label{coro constraints}
Let $\mathfrak{F}$ (resp.\ $\mathfrak{G}$) be a non-empty collection of real-valued bounded Borel functions on $\X$ (resp.\ $\Y$), and define\footnote{Here and thereafter, \textit{span} denotes the linear space of functions obtained by finite linear combinations of those functions in the generating class.} $\mathfrak{H}^{\mathfrak{F},\mathfrak{G}}:=\text{span}\{fg:\,\, f\in \mathfrak{F},  g\in \mathfrak{G}\}$.
We define the optimal transport with linear constraints determined by $ \mathfrak{H}^{\mathfrak{F},\mathfrak{G}}$ as
\begin{equation}\label{eq transport constraints}\textstyle
\inf\limits_{\substack{\pi\in\Pi(\mu,\nu)\\ \forall h\in \mathfrak{H}^{\mathfrak{F},\mathfrak{G}}: \,\,\EXP^{\pi}[h]=0}}\EXP^{\pi} [\, c \, ].
\end{equation}
Assume that ${c}:\X\times\Y\to(-\infty,\infty]$ is lower semicontinuous and bounded from below, and that either all elements of $\mathfrak{F}$, or all elements of $\mathfrak{G}$, are continuous. Then \eqref{eq transport constraints} is attained, and there is no duality gap:
\begin{equation*}\label{eq transport constraints duality}\textstyle
\inf\limits_{\substack{\pi\in\Pi(\mu,\nu)\\ \forall h\in \mathfrak{H}^{\mathfrak{F},\mathfrak{G}}: \,\,\EXP^{\pi}[h]=0}}\EXP^{\pi} [\, c \, ]= \sup\limits_{\substack{\phi\in\C_b(\X),\,\psi\in\C_b(\Y),\, h\in   \mathfrak{H}^{\mathfrak{F},\mathfrak{G}} \\ \phi\oplus\psi\leq {c}+h }} \left\{\int \phi(x)\mu(dx)+\int \psi(y)\nu(dy) \right \}.
\end{equation*}
\end{corollary}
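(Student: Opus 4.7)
The plan is to reformulate the constrained primal as a min-max over $\Pi(\mu,\nu)$ and $\mathfrak{H}^{\mathfrak{F},\mathfrak{G}}$, interchange the inf and sup by Sion's minimax theorem, and then resolve each resulting inner unconstrained transport problem by Proposition~\ref{prop extension Kellerer-Mathias}. For attainability, first observe that for every generator $fg$ of $\mathfrak{H}^{\mathfrak{F},\mathfrak{G}}$ the map $\pi \mapsto \EE^\pi[fg]$ is weakly continuous on $\Pi(\mu,\nu)$ by Lemma~\ref{lem:fhcts}, since by hypothesis one of $f,g$ is continuous. Hence the feasible set
\[
\Pi_{\mathrm{cons}} := \{\pi \in \Pi(\mu,\nu) : \EE^\pi[h]=0 \text{ for every } h \in \mathfrak{H}^{\mathfrak{F},\mathfrak{G}}\}
\]
is a weakly closed, hence weakly compact, subset of $\Pi(\mu,\nu)$. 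Combined with the standard weak lower semicontinuity of $\pi \mapsto \EE^\pi[c]$ (obtained by writing the lsc $c$ as the supremum of an increasing sequence of bounded continuous functions and invoking monotone convergence), this yields attainment.

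For the duality, the vector-space structure of $\mathfrak{H}^{\mathfrak{F},\mathfrak{G}}$ gives
\[
\sup_{h \in \mathfrak{H}^{\mathfrak{F},\mathfrak{G}}} (-\EE^\pi[h]) \;=\; \begin{cases} 0, & \pi \in \Pi_{\mathrm{cons}},\\ +\infty, & \text{otherwise,}\end{cases}
\]
so the primal coincides with $\inf_\pi \sup_h \EE^\pi[c-h]$. I would then apply Sion's minimax theorem to the Lagrangian $L(\pi,h):=\EE^\pi[c-h]$, which is affine (hence quasi-convex and quasi-concave) in each variable, weakly lsc in $\pi$ for every fixed $h$ (using Lemma~\ref{lem:fhcts} to get weak continuity of $\pi \mapsto \EE^\pi[h]$), and affine continuous in $h$ for any vector topology under which the functionals $\pi' \mapsto \EE^{\pi'}[\cdot]$ separate points on $\mathfrak{H}^{\mathfrak{F},\mathfrak{G}}$. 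Together with the weak compactness and convexity of $\Pi(\mu,\nu)$ this yields $\inf_\pi \sup_h L = \sup_h \inf_\pi L$. For each fixed $h$ the cost $c-h = c+(-h)$ has exactly the structure permitted by (the obvious finite-sum extension of) Proposition~\ref{prop extension Kellerer-Mathias} noted by the authors right after that statement, so $\inf_\pi \EE^\pi[c-h]$ equals its Kantorovich dual $\sup_{\phi\oplus\psi \leq c-h}\{\int\phi\,d\mu+\int\psi\,d\nu\}$. Taking the supremum over $h$ and relabeling $h \mapsto -h$ (which preserves $\mathfrak{H}^{\mathfrak{F},\mathfrak{G}}$) produces exactly the dual in the statement.

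The main obstacle will be that $c$ can take the value $+\infty$, which obstructs the direct use of the classical real-valued form of Sion's theorem. I would resolve this by truncation: replace $c$ with $c_n := c \wedge n$, which is still lsc and bounded, prove primal-dual equality for each $c_n$ by the scheme above, and pass to the limit. The primal values $\mathrm{Primal}_n$ increase to $\mathrm{Primal}$ by monotone convergence on the fixed weakly compact set $\Pi_{\mathrm{cons}}$; the dual values $\mathrm{Dual}_n$ are increasing in $n$, and the easy weak-duality inequality (for $\pi \in \Pi_{\mathrm{cons}}$ and any admissible $(\phi,\psi,h)$ for the untruncated dual, $\int\phi\,d\mu+\int\psi\,d\nu = \EE^\pi[\phi\oplus\psi] \leq \EE^\pi[c+h] = \EE^\pi[c]$) gives $\mathrm{Dual}_n \leq \mathrm{Dual} \leq \mathrm{Primal}$. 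Combined with $\mathrm{Primal}_n = \mathrm{Dual}_n$, passing $n \to \infty$ forces $\mathrm{Primal} = \mathrm{Dual}$.
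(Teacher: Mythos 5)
Your proposal is correct and follows essentially the same route as the paper: weak closedness of the constrained set inside the compact $\Pi(\mu,\nu)$ via Lemma~\ref{lem:fhcts} for attainability, a Lagrangian/Sion minimax swap over $\Pi(\mu,\nu)\times\mathfrak{H}^{\mathfrak{F},\mathfrak{G}}$, and Proposition~\ref{prop extension Kellerer-Mathias} (in its finite-sum form) to dualize each inner unconstrained problem. The only difference is your truncation $c_n=c\wedge n$ to keep the Lagrangian real-valued before invoking Sion, a technical precaution the paper skips by applying the minimax argument directly to the extended-valued objective; your limiting argument for $\mathrm{Primal}_n\to\mathrm{Primal}$ mirrors the monotone-approximation step already used in the paper's proof of Proposition~\ref{prop extension Kellerer-Mathias} and is sound.
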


\begin{proof}
The set of all $\pi\in\Pi(\mu,\nu)$ s.t.\  for all $ h\in \mathfrak{H}^{\mathfrak{F},\mathfrak{G}}$ it holds that $\EXP^{\pi}[h]=0$, is a weakly closed subset of the compact set $\Pi(\mu,\nu) $ (follows from Lemma~\ref{lem:fhcts}). This is enough to ensure the attainability of  \eqref{eq transport constraints}.
Further, it is immediate that 
$$\inf_{\substack{\pi\in\Pi(\mu,\nu)\\ \forall h\in \mathfrak{H}^{\mathfrak{F},\mathfrak{G}}: \,\,\EXP^{\pi}[h]=0}}\EXP^{\pi} [\, c \, ] = \inf_{\pi\in\Pi(\mu,\nu)}\sup_{ h\in \mathfrak{H}^{\mathfrak{F},\mathfrak{G}}}\EXP^{\pi} [\, c+h \, ] =  \sup_{ h\in \mathfrak{H}^{\mathfrak{F},\mathfrak{G}}}\inf_{\pi\in\Pi(\mu,\nu)}\EXP^{\pi} [\, c+h \, ], $$
by the usual minimax arguments (e.g.\ \cite{Sion}, and observe that the affine bilinear objective functions is lower semicontinuous in $\pi$ by Lemma~\ref{lem:fhcts}, as $\pi$ varies over a compact). By Proposition~\ref{prop extension Kellerer-Mathias} we find
$$\inf_{\substack{\pi\in\Pi(\mu,\nu)\\ \forall h\in \mathfrak{H}^{\mathfrak{F},\mathfrak{G}}: \,\,\EXP^{\pi}[h]=0}}\EXP^{\pi} [\, c \, ] = \sup_{ h\in \mathfrak{H}^{\mathfrak{F},\mathfrak{G}}} \,\,\, \sup_{\substack{  \phi\in \C_b(\X),\psi\in \C_b(\Y)\\ \phi  \oplus \psi\leq c+h } }\left\{\int \phi(x)\mu(dx)+\int \psi(y)\nu(dy) \right \},$$
yielding the desired result.
\end{proof}

\subsection{Attainability and Duality in causal transport}\label{sect:dualc}
We recall from \eqref{eq causal transport} that by a causal optimal transport problem with respect to a cost function $c:\X\times\Y\to (-\infty,\infty]$, we mean the optimization problem
\begin{equation*}\textstyle
\inf\limits_{\pi\in\Pi^{\F^\X,\F^\Y}(\mu,\nu)}\EXP^{\pi} [\, c \, ].
\end{equation*}
It has already been observed that these problems form a subclass of optimal transport problems under linear constraints; see \cite{Lassalle2,BBLZ}. Let us make this precise in the present setting, by defining 
\begin{equation*}
\textstyle \fH\,\,:=\,\, span\left (\left\{ g\left[f- \EXP^\mu[ f| \F^\X_t]\right]\, : \,f\in C_b(\X),g\in B_b(\Y,\F^\Y_t),t\in [0,T]\right \} \right ).\label{eq h}
\end{equation*} 

\begin{lemma}\label{lem characterization causality}
Let $\pi\in \Pi(\mu, \nu)$. Then $\pi$ is causal w.r.t. $\F^\X$ and $\F^\Y$ (i.e.\ $\pi\in \Pi^{\F^\X,\F^\Y}(\mu, \nu)$) if and only if $ \EXP^\pi[\,h\,]=0$ for all $h\in\fH$.
\end{lemma}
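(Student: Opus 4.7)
The plan is to handle both implications directly, using the equivalent forms of causality collected in Remark~\ref{rem: causal} together with a standard monotone-class/density argument to interchange continuous and indicator test functions on $\X$.

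For the forward implication, I would fix a generator $h(x,y) = g(y)\bigl[f(x)-\EE^\mu[f|\F^\X_t](x)\bigr]$ with $f\in C_b(\X)$, $g\in B_b(\Y,\F^\Y_t)$. First I would invoke Remark~\ref{rem: causal}(2), extended from indicators $\IND_{D_t}$ to $g\in B_b(\Y,\F^\Y_t)$ by a monotone-class argument, to obtain
$$\phi(x)\;:=\;\EE^\pi\bigl[g(y)\,\big|\,\F^\X_t\otimes\{\emptyset,\Y\}\bigr]\;=\;\EE^\pi\bigl[g(y)\,\big|\,\F^\X_T\otimes\{\emptyset,\Y\}\bigr],$$
with $\phi$ being $\F^{\X,\mu}_t$-measurable. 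Combining the tower property, the fact that the first marginal of $\pi$ is $\mu$, and the defining property of $\EE^\mu[\,\cdot\,|\F^\X_t]$ against the $\F^{\X,\mu}_t$-measurable function $\phi$, I would compute
$$\EE^\pi[g(y)f(x)] = \EE^\pi[\phi(x)f(x)] = \EE^\mu[\phi f] = \EE^\mu\bigl[\phi\,\EE^\mu[f|\F^\X_t]\bigr] = \EE^\pi\bigl[g(y)\,\EE^\mu[f|\F^\X_t](x)\bigr],$$
which gives $\EE^\pi[h]=0$. Linearity then covers the whole of $\fH$.

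For the reverse, fix $A\in\F^\Y_t$ and apply the hypothesis with $g=\IND_A$: for every $f\in C_b(\X)$,
$$\EE^\pi[\IND_A(y)f(x)] \;=\; \EE^\pi\bigl[\IND_A(y)\,\EE^\mu[f|\F^\X_t](x)\bigr].$$
Writing $\pi^x(A)=\EE^\pi[\IND_A|\F^\X_T\otimes\{\emptyset,\Y\}](x)$ and using the tower property together with the marginal condition, this rewrites as $\EE^\mu\bigl[f\,(\pi^\cdot(A)-\EE^\mu[\pi^\cdot(A)|\F^\X_t])\bigr]=0$ for every $f\in C_b(\X)$. Extending to bounded Borel $f$ by monotone class (using that $\pi^\cdot(A)-\EE^\mu[\pi^\cdot(A)|\F^\X_t]$ is bounded), and then picking $f$ equal to the sign of this difference, I conclude $\pi^x(A)=\EE^\mu[\pi^\cdot(A)|\F^\X_t](x)$ $\mu$-a.s., so that $x\mapsto\pi^x(A)$ is $\F^{\X,\mu}_t$-measurable: this is exactly causality per Definition~\ref{def:causality}.

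The only delicate points are bookkeeping-type issues rather than genuine obstacles: (i) carefully extending Remark~\ref{rem: causal}(2) from $\IND_{D_t}$ to general $g\in B_b(\Y,\F^\Y_t)$ by a $\pi$-system/monotone-class argument in the forward direction, and (ii) upgrading the orthogonality relation in the reverse direction from test functions $f\in C_b(\X)$ to bounded Borel $f$, in order to land in the $\mu$-completion $\F^{\X,\mu}_t$ as required by Definition~\ref{def:causality}. Both are routine given that $\X$ is Polish and the relevant integrands are bounded.
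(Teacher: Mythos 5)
Your proof is correct and follows essentially the same route as the paper's: the paper also reduces causality to the $\mu$-a.s.\ identity $g^t=\EE^\mu[g^t|\F^\X_t]$ for bounded $\F^\Y_t$-measurable $g$, tests it against $f\in\C_b(\X)$ (invoking that $\mu$ is a finite Borel measure on a Polish space, your monotone-class/sign-function step), and then moves the conditional expectation across the pairing and uses the tower property to land on $\EE^\pi\bigl[g\,(f-\EE^\mu[f|\F^\X_t])\bigr]=0$. The only difference is presentational: the paper writes a single chain of equivalences, while you split it into the two implications.
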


\begin{proof}
For $g$ bounded $(\Y,\F_t^\Y)$-measurable, denote $g^t= g^t(x): = \EXP^\pi[ g(y) |\F_T^\X\otimes \{\emptyset,\C\} ](x)$. By definition $\pi$ is  causal w.r.t. $\F^\X$ and $\F^\Y$ if and only if for all $t\leq T$ and all such $g^t$ we have
\begin{align}\notag
g^t = \EXP^\mu[ g^t |\F^\X_t],\,\,\,\, \mu-a.s, 
\end{align}
which is equivalent to
\begin{align*}
\EXP^\mu [ f ( g^t -  \EXP^\mu[ g^t |\F^\X_t] ) ]= 0,
\end{align*}
for every continuous bounded function $f :\X\to\R$ and for all $t\leq T$. The fact that we can take the $f$'s continuous and not merely measurable comes from the fact that $\mu$ is a Borel finite measure on a Polish space. It is easy to see that the previous equation is equivalent to 
\begin{align*}
\EXP^\mu[ g^t ( f -  \EXP^\mu [f|\F_t^\X] ) ]= 0.
\end{align*}
Finally, by the tower property of conditional expectations the latter is in turn equivalent to 
\begin{align*}
\EXP^\pi [ g ( f-  \EXP^\mu [f|\F_t^\X] ) ] =0. 
\end{align*}
\end{proof}

In \cite[Sect. 3]{Lassalle2} the author proves, in the general setting of Polish spaces, that the set $\Pi^{\F^\X,\F^\Y}(\mu, \nu)$ is closed for weak convergence. Thus an attainability and duality theory for the problem of optimal transport under the causality constraint follows. This is done there at the expense of a regularity assumptions of sorts on the filtration $\F^\Y$ (see \cite[Definition 3]{Lassalle2}). Such an assumption would, in our context, drastically limit the applicability of the transport approach. As we now show, we can still obtain an attainability and duality theory without assumptions on $\F^\Y$. We do this at the price of requiring the first marginal $\mu$ to be ``weakly continuous'' in a precise sense. This is enough for the purpose of our work.

\begin{lemma}\label{lem:cpt}
Assume that $\mu$ satisfies the following weak continuity property:
\begin{equation}\label{eq:weak_cont}
\forall f\in C_b(\X),t\in[0,T]:\,\, x\mapsto \EXP^\mu[f|\F^\X_t](x) \mbox{ is continuous}.
\end{equation}
Then the set $\Pi^{\F^\X,\F^\Y}(\mu,\nu)$ of causal couplings is compact for weak convergence.
\end{lemma}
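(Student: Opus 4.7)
The natural approach is to combine Lemma~\ref{lem characterization causality} with the continuity lemma (Lemma~\ref{lem:fhcts}) used in the proof of Proposition~\ref{prop extension Kellerer-Mathias}. By Lemma~\ref{lem characterization causality}, a coupling $\pi\in\Pi(\mu,\nu)$ is causal if and only if $\EE^\pi[h]=0$ for every $h\in\fH$, so
\[
\Pi^{\F^\X,\F^\Y}(\mu,\nu)=\bigcap_{h\in\fH}\{\pi\in\Pi(\mu,\nu):\EE^\pi[h]=0\}.
\]
Since $\Pi(\mu,\nu)$ is weakly compact (standard consequence of Prokhorov's theorem applied to the fixed marginals), it suffices to show that each of the sets on the right-hand side is weakly closed in $\Pi(\mu,\nu)$, i.e.\ that the functional $\pi\mapsto\EE^\pi[h]$ is weakly continuous on $\Pi(\mu,\nu)$ for every $h\in\fH$.

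By linearity, I may restrict to generators of $\fH$, namely functions of the form
\[
h(x,y)=g(y)\,\tilde f(x),\qquad\tilde f(x):=f(x)-\EE^\mu[f\,|\,\F^\X_t](x),
\]
with $f\in C_b(\X)$, $g\in B_b(\Y,\F^\Y_t)$ and $t\in[0,T]$. The weak continuity property \eqref{eq:weak_cont} is precisely what is needed here: it ensures that (some version of) $\EE^\mu[f\,|\,\F^\X_t]$ is a continuous, bounded function of $x$, hence so is $\tilde f$. Thus $h$ factorises as a product of a bounded continuous function of $x$ and a bounded measurable function of $y$. Lemma~\ref{lem:fhcts} then yields directly that $\pi\mapsto\EE^\pi[g\,\tilde f]$ is continuous on $\Pi(\mu,\nu)$ in the weak topology.

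Consequently, every constraint $\{\EE^\pi[h]=0\}$ cuts out a weakly closed subset of $\Pi(\mu,\nu)$, and the intersection $\Pi^{\F^\X,\F^\Y}(\mu,\nu)$ is closed in the compact set $\Pi(\mu,\nu)$, hence compact. I do not expect any real obstacle: the only subtle point is noting that assumption \eqref{eq:weak_cont} is tailor-made to promote the relevant $x$-marginal factor from merely bounded Borel to bounded continuous, which is exactly the hypothesis that Lemma~\ref{lem:fhcts} needs in order to tolerate the bounded measurable factor $g$ on the $\Y$-side.
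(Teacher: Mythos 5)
Your argument is correct and is essentially the paper's own proof: both reduce compactness to weak closedness inside the compact set $\Pi(\mu,\nu)$, characterize causality via the linear constraints $\EE^\pi[h]=0$ for $h\in\fH$ (Lemma~\ref{lem characterization causality}), and use assumption \eqref{eq:weak_cont} together with Lemma~\ref{lem:fhcts} to see that each constraint is weakly closed. Your write-up simply spells out the factorisation of the generators of $\fH$ in a bit more detail.
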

\begin{proof}
Since $\Pi^{\F^\X,\F^\Y}(\mu,\nu)\subseteq\Pi(\mu,\nu)$, and the latter is weakly compact, we only need to show that $\Pi^{\F^\X,\F^\Y}(\mu,\nu)$ is weakly closed. This follows from Lemma~\ref{lem characterization causality}, since the condition $\EXP^\pi[\,h\,]=0$ is closed for all $h\in\fH$, by Lemma~\ref{lem:fhcts} and the continuity property \eqref{eq:weak_cont}. 
\end{proof}

\begin{theorem}[Causal transport duality]\label{prop duality}
Let $c:\X\times\Y\to (-\infty,\infty]$ be bounded from below and lower semicontinuous. Further assume that $\mu$ satisfies the weak continuity property~\eqref{eq:weak_cont}.
Then \eqref{eq causal transport} is attained and there is no duality gap:
\begin{align*} \textstyle
\inf\limits_{\pi\in\Pi^{\F^\X,\F^\Y}(\mu,\nu)}\EXP^{\pi} [\, c \, ]
=\sup\limits_{\substack{  \phi\in C_b(\X),\,\psi\in C_b(\Y),\, h\in \fH \\ \phi\oplus\psi\leq c+h  }} \left\{\int\phi d\mu+\int \psi d\nu \right \} 
= \sup\limits_{\substack{  \psi\in C_b(\Y),\, h\in \fH \\ \psi\leq c+h}} \left\{\int \psi d\nu \right \} .
\end{align*}
\end{theorem}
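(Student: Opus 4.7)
The plan is to observe that this theorem is essentially a direct specialization of Corollary~\ref{coro constraints} once the causality constraint is recognized as a linear constraint of the form treated there. The key bridge is Lemma~\ref{lem characterization causality}, which says that $\pi\in\Pi^{\F^\X,\F^\Y}(\mu,\nu)$ iff $\EE^\pi[h]=0$ for all $h\in\fH$, so the primal in \eqref{eq causal transport} coincides with the linearly constrained primal \eqref{eq transport constraints} for the choice
\[
\mathfrak{F}=\{\,f-\EXP^\mu[f|\F^\X_t]\,:\, f\in C_b(\X),\, t\in[0,T]\,\},\quad
\mathfrak{G}=\{\,g\,:\, g\in B_b(\Y,\F^\Y_t),\, t\in[0,T]\,\},
\]
so that $\fH=\mathfrak{H}^{\mathfrak{F},\mathfrak{G}}$.

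The first step will be to verify the hypotheses of Corollary~\ref{coro constraints}. The cost $c$ is assumed bounded below and lsc. The crucial point is that every $x\mapsto f(x)-\EXP^\mu[f|\F^\X_t](x)$ belongs to $C_b(\X)$: boundedness is automatic, and continuity is precisely guaranteed by the weak continuity hypothesis \eqref{eq:weak_cont} on $\mu$. (Note that $B_b(\Y,\F^\Y_t)\subseteq B_b(\Y,\B(\Y))$ carries no regularity, but in Corollary~\ref{coro constraints} it is enough that one of the two factor classes be continuous.) Applying Corollary~\ref{coro constraints} directly yields both attainability of the primal \eqref{eq causal transport} and the first equality
\[
\inf_{\pi\in\Pi^{\F^\X,\F^\Y}(\mu,\nu)}\EXP^{\pi}[c]=\sup_{\substack{\phi\in C_b(\X),\,\psi\in C_b(\Y),\, h\in\fH \\ \phi\oplus\psi\leq c+h}}\Bigl\{\int\phi\,d\mu+\int\psi\,d\nu\Bigr\}.
\]

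The second step will handle the reduction to a dual over $(\psi,h)$ alone. One direction is trivial, since taking $\phi\equiv 0$ shows $\sup_{\phi,\psi,h}\geq\sup_{\psi,h}$. For the converse, given an admissible triple $(\phi,\psi,h)$, we exploit the fact that, with $g\equiv 1\in B_b(\Y,\F^\Y_0)$, $t=0$ and $f=\phi$, the function $(x,y)\mapsto \phi(x)-\EXP^\mu[\phi|\F^\X_0](x)$ lies in $\fH$. In the standard canonical setup considered throughout the paper $\F^\X_0$ is $\mu$-trivial, so this reads $\phi(x)-\int\phi\,d\mu\in\fH$. Consequently, setting $\psi'(y):=\psi(y)+\int\phi\,d\mu\in C_b(\Y)$ and $h'(x,y):=h(x,y)+\int\phi\,d\mu-\phi(x)\in\fH$, the inequality $\phi\oplus\psi\leq c+h$ rewrites as $\psi'\leq c+h'$, while the objective value $\int\phi\,d\mu+\int\psi\,d\nu=\int\psi'\,d\nu$ is preserved. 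Hence the two suprema coincide.

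The main obstacle is conceptual rather than technical: namely, recognizing that the ingredients assembled in Section~\ref{sect:dualcl} (Proposition~\ref{prop extension Kellerer-Mathias} and Corollary~\ref{coro constraints}) are tailored precisely so that the causality constraint fits into their linear-constraint framework, and ensuring that the weak continuity hypothesis on $\mu$ suffices to produce the continuity of the factors $f-\EXP^\mu[f|\F^\X_t]$ needed in Corollary~\ref{coro constraints}. Once these identifications are in place, no further stochastic-analytic or measure-theoretic input is required; the minimax argument and the extended Kellerer-type duality used inside Corollary~\ref{coro constraints} do all the work.
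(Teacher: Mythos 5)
Your overall architecture is the paper's (reduce causality to linear constraints via Lemma~\ref{lem characterization causality}, invoke the constrained duality of Section~\ref{sect:dualcl}, then absorb $\phi$ because $\phi-\EXP^\mu[\phi]\in\fH$), but the way you feed the problem into Corollary~\ref{coro constraints} contains a genuine error: the identification $\fH=\mathfrak{H}^{\mathfrak{F},\mathfrak{G}}$ is false. With your choices of $\mathfrak{F}$ and $\mathfrak{G}$, the full product span contains all products with \emph{decoupled} time indices, e.g.\ $g(y)\bigl[f(x)-\EXP^\mu[f|\F^\X_t](x)\bigr]$ with $g\in B(\Y,\F^\Y_s)$ bounded and $s>t$; such functions are not in $\fH$, and causal plans do not annihilate them. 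Concretely, taking $s=T$ (so $g$ is any bounded Borel function of $y$) and $t=0$ with $\F^\X_0$ $\mu$-trivial, the constraints $\EXP^\pi[h]=0$ for all $h\in\mathfrak{H}^{\mathfrak{F},\mathfrak{G}}$ force $\EXP^\pi[f(x)\mid y]=\EXP^\mu[f]$ for every $f\in\C_b(\X)$, i.e.\ $\pi=\mu\otimes\nu$; already the identity coupling (causal when $\X=\Y$, $\F^\X=\F^\Y$, $\mu=\nu$) is excluded. Hence the primal \eqref{eq transport constraints} you apply the corollary to has a strictly smaller feasible set than $\Pi^{\F^\X,\F^\Y}(\mu,\nu)$ and a strictly larger dual class than $\fH$, and the resulting duality is for a different problem; the available comparisons (causal primal $\leq$ your primal, $\fH$-dual $\leq$ your dual) do not close the gap. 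The repair is exactly the paper's reading of its own corollary: the proof of Corollary~\ref{coro constraints} (weak closedness via Lemma~\ref{lem:fhcts}, Sion's minimax, and Proposition~\ref{prop extension Kellerer-Mathias} applied to $c+h$ with finitely many product terms) never uses that the constraint space is a \emph{full} product span, so it applies verbatim to the linear space $\fH$ itself, whose generators are products of a bounded Borel $y$-factor with an $x$-factor that is bounded and, by \eqref{eq:weak_cont}, continuous. Attainability of the causal problem then follows from the weak closedness of the causal set (this is Lemma~\ref{lem:cpt}), which is how the paper argues it.

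Your second step, eliminating $\phi$, is the paper's argument and is fine at the same level of rigour: it needs $\phi-\EXP^\mu[\phi]\in\fH$, which via $g\equiv 1$, $t=0$ requires $\EXP^\mu[\phi|\F^\X_0]$ to be ($\mu$-a.s.) constant, i.e.\ $\F^\X_0$ $\mu$-trivial. You state this assumption explicitly, whereas the paper uses it tacitly; note it is genuinely needed in the abstract setting (if $\F^\X_t\equiv\B(\X)$ then $\fH$ is trivial and the second equality can fail), and it does hold in the intended application, Wiener measure with the right-continuous canonical filtration, by Blumenthal's zero-one law. So the second step is acceptable; the first step needs the correction above before the theorem's two equalities, as stated for $\fH$, actually follow.
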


\begin{proof}
Attainability follows by classical arguments from Lemma~\ref{lem:cpt}.
Duality is a direct consequence of Lemma~\ref{lem characterization causality} and Corollary~\ref{coro constraints}, upon observing that $\fH$ has the correct structure and that, under the weak continuity assumption on $\mu$, all the $x$-dependent factors generating $\fH$ are continuous and bounded. The fact that $\phi$ disappears from the dual problem follows from the fact that $\phi-\EXP^\mu[\phi]$ belongs to $\fH$.
\end{proof}

The previous weak continuity property of $\mu$ is fulfilled if e.g.\ $\X$ is a path space and $\mu$ is the law of a Feller process. So the case that interests us, Wiener measure on continuous path space, is fully covered.

\appendix

\section{Elements of Orlicz space theory}\label{Sec orlicz}

As presented in \cite{RaoRen}, a convex even function $\Phi:\R\to \R_+\cup\{+\infty\}$ satisfying $\Phi(0)=0$ and $ \Phi(\infty)=\infty$, is called a \textit{Young function}. If such a function is finite-valued, it is zero only at the origin, and satisfies both $\Phi(0)/0=0$ and $\Phi(\infty)/\infty=\infty$, then it is called an \textit{N-function}. We remark that $\Phi$ is a Young function (resp.\ N-function) if and only if its conjugate $\Phi^*$ is so.  

From now on we identify processes which are $d\nu\times dt$-a.e.\ equal. Assuming that $\Phi$ is a Young function, we define
$$\textstyle M^{\Phi} \, :=\,\left \{F:\C\times [0,T]\to \R, \mbox{ s.t. $F$ is $\G$-previsible and }\forall k>0:\, \int\int_0^T \Phi(kF_t(\sw))dt\nu(d\sw)<\infty\right  \},$$
which is a closed subspace (sometimes called Orlicz heart or Morse-Transue space) of the so-called Orlicz space
 $$\textstyle L^{\Phi} \, :=\,\left \{F:\C\times [0,T]\to \R, \mbox{ s.t. $F$ is $\G$-previsible and }\exists k>0:\, \int\int_0^T \Phi(kF_t(\sw))dt\nu(d\sw)<\infty\right  \},$$
when endowed with the \textit{gauge norm} 
  $$\textstyle \|F\|_{\Phi}:=\inf \left\{\beta >0: \int \int_0^T \Phi(F_t(\sw)/\beta)dt\,\nu(d\sw)\leq 1\right\}.$$
The gauge norm actually {turns $L^{\Phi}$} into a Banach space, and if e.g.\ $\Phi$ is an N-function then the norm-dual of $M^{\Phi}$ is $L^{\Phi^*}$ (\cite[Ch. III.3.3, Theorem 10]{RaoRen} and \cite[Ch. IV.4.1, Theorem 6]{RaoRen}). 

We now introduce growth conditions on $\Phi$ and $\Phi^*$. We say that a Young function $\Phi$ is in $\Delta_2 $ if there are some $C,x_0>0$ s.t.\ whenever $x\geq x_0$ we have $\Phi(2x)\leq C\Phi(x)$. This is seen equivalent to the following condition on $\Phi^*$: there exist $\ell>1,x_0>0$ s.t.\ whenever $x\geq x_0$ we have $\Phi^*(x)\leq \frac{\Phi^*(\ell x)}{2\ell}$. When $\Phi$ is an N-function, then by \cite[Ch. II.2.3, Theorem 3]{RaoRen}:
\begin{equation}\label{eq quantitative Delta 2}
\textstyle \Phi\mbox{ in } \Delta_2 \iff \exists x_0>0,\epsilon>1 \mbox{ s.t. } \left[x\geq x_0 \Rightarrow \frac{x\Phi'(x)}{\Phi(x)}\leq \epsilon \right].
\end{equation}
Clearly when $\Phi$ is in $\Delta_2$ we have $M^\Phi=L^\Phi$. The reflexivity of $L^\Phi$ is essentially equivalent to $\Phi$ and $\Phi^*$ being in $\Delta_2$.

We finally provide a technical lemma useful in the proof of Theorem~\ref{thm orlicz}:

\begin{lemma}\label{lem aux Sobolev}
If $\rho^*$ is an N-function in $\Delta_2$, then
$$ \textstyle (\omega,\sw)\in \C\times \C \mapsto \int_0^T\rho(\abs{\sw_t-\omega_t})dt$$
is lower semicontinuous when $\C\times \C$ is equipped with the ``sum of the uniform norms'' norm.
\end{lemma}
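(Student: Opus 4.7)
The plan is to establish the lemma via the direct method in the calculus of variations. Setting $g_n:=\sw_n-\omega_n$ and $g:=\sw-\omega$ for any sequence $(\omega_n,\sw_n)\to(\omega,\sw)$ uniformly in $\C\times\C$, we have $g_n\to g$ uniformly on $[0,T]$ with $g_n(0)=g(0)=0$. The lemma thus reduces to lower semicontinuity on $\{h\in C([0,T]):h(0)=0\}$ (with the uniform norm) of the functional
$$J(h):=\int_0^T \rho(\abs{h_t})\,dt,$$
which, by the very definition of $\abs{\,\cdot\,}$, equals $\int_0^T\rho(\dot h_t)\,dt$ when $h$ is absolutely continuous and $+\infty$ otherwise.

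First I would observe that $\rho$ itself is an $N$-function: by standard Fenchel/Young duality for $N$-functions (see, e.g., Rao--Ren), the biconjugate of an $N$-function is again an $N$-function, and $\rho=\rho^{**}$ since $\rho$ is convex, even and lower semicontinuous with $\rho(0)=0$ and $\rho(+\infty)=+\infty$; hence $\rho(x)/|x|\to+\infty$ as $|x|\to+\infty$, i.e., $\rho$ is superlinear. The $\Delta_2$ hypothesis on $\rho^*$ is actually not needed for this specific lemma; only the $N$-function property is used below.

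Next, assume $L:=\liminf_n J(g_n)<+\infty$ (otherwise there is nothing to prove) and pass to a subsequence realizing this value, so that $J(g_n)\to L$. For $n$ large, each $g_n$ is absolutely continuous with $\int_0^T\rho(\dot g_n)\,dt\leq L+1$. By the de la Vallée Poussin criterion, superlinearity of $\rho$ together with this uniform bound yields equi-integrability of $\{\dot g_n\}_n$ in $L^1([0,T])$, so by Dunford--Pettis a further subsequence satisfies $\dot g_n\rightharpoonup u$ weakly in $L^1$ for some $u\in L^1([0,T])$. Testing weak convergence against $\IND_{[0,t]}\in L^\infty$ gives
$$g_n(t)=\int_0^t\dot g_n(s)\,ds\;\longrightarrow\;\int_0^t u(s)\,ds\quad \text{for each }t\in[0,T],$$
and combining with the uniform convergence $g_n\to g$ one concludes $g(t)=\int_0^t u(s)\,ds$, i.e., $g$ is absolutely continuous with $\dot g=u$.

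Finally, the integral functional $v\mapsto\int_0^T\rho(v_t)\,dt$ on $L^1([0,T])$ is convex (since $\rho$ is) and strongly l.s.c.\ (by Fatou, extracting an a.e.-convergent subsequence from any strongly converging one, using $\rho\geq 0$); a convex, strongly l.s.c.\ functional on a Banach space is weakly l.s.c.\ by Mazur's theorem. Applying this to $\dot g_n\rightharpoonup\dot g$, one obtains $J(g)=\int_0^T\rho(\dot g)\,dt\leq\liminf_n\int_0^T\rho(\dot g_n)\,dt=L$, which is the desired semicontinuity. The main subtlety of the argument is that uniform convergence of the primitives $g_n$ in itself conveys no information about their derivatives; superlinearity of $\rho$ is precisely what turns the energy bound $\int\rho(\dot g_n)\leq L+1$ into weak $L^1$-compactness of the derivatives, and this is the step that would fail without the $N$-function hypothesis (e.g.\ for $\rho(x)=|x|$).
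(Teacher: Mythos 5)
Your proof is correct, but it follows a genuinely different route from the paper's. The paper exploits the $\Delta_2$ condition on $\rho^*$ quantitatively: by \cite[Ch. II.2.3, Corollary 5]{RaoRen} it gives a power-type lower bound $\rho(x)\gtrsim |x|^p$ with $p>1$, so the sublevel sets of $\sw\mapsto\int_0^T\rho(\abs{\sw_t})dt$ are bounded in the reflexive space $W^{1,p}([0,T])$; weak compactness there shows the uniform limit is again absolutely continuous, and lower semicontinuity of the integral functional along the (sub)sequence finishes the argument. You instead work at the $L^1$ level: superlinearity of $\rho$ (which, as you note, already follows from $\rho^*$ being finite-valued, via Fenchel--Young or biconjugation) plus the energy bound gives equi-integrability of the derivatives by de la Vall\'ee Poussin, hence weak $L^1$-compactness by Dunford--Pettis; weak convergence of $\dot g_n$ combined with uniform convergence of the primitives identifies the limit as absolutely continuous with the right derivative, and convexity plus strong lower semicontinuity (Fatou) upgrades to weak lower semicontinuity via Mazur. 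The payoff of your argument is that it dispenses with the $\Delta_2$ hypothesis entirely and isolates superlinearity as the truly relevant condition (correctly flagged as sharp, since lower semicontinuity fails for $\rho(x)=|x|$, e.g.\ along smooth approximations of a singular continuous path); the paper's version is shorter given the Orlicz-space reference and suffices for Theorem~\ref{thm orlicz}, where $\Delta_2$ is assumed anyway. Only cosmetic caveats: your subscript notation $\sw_n,\omega_n$ for sequence elements clashes with the time-index subscripts used throughout the paper, and the passage to the subsequence realizing $\liminf_n J(g_n)$ should be kept explicit when citing the final inequality, as you indeed do.
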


\begin{proof}
It suffices to show the lower semicontinuity of $\sw\mapsto r(\sw):= \int_0^T\rho(\abs{\sw_t})dt$. Let $\C_K:=\{\sw\in \C:r(\sw)\leq K\}$. Since $\rho^*$ is in $\Delta_2$, we have by \cite[Ch. II.2.3, Corollary 5]{RaoRen} that $\rho$ grows at least as fast as some power function with exponent $p> 1$. This shows that $\C_K$ is bounded in $W^{1,p}([0,T])$, the Sobolev space of absolutely continuous functions with $p$-integrable first derivative. By classical arguments we have that if $\sw^n\to\sw$ uniformly, with $\sw^n\in \C_K$, then $\sw\in W^{1,p}([0,T])$ and in particular $\sw$ is absolutely continuous too. By Fatou's Lemma we further get $\sw\in \C_K$, yielding the desired result.
\end{proof}

\section{Proof of Theorem \ref{thm orlicz}}\label{sec Orlicz proof}
We consider a fixed measure $\nu$ on $\C$.
The idea behind the refined dual \eqref{eq: dual easy} comes from the next argument:

\begin{lemma}\label{lem FY}
Let $\rho:\R\to\R\cup\{+\infty\}$ be convex.
Weak-refined duality holds, in the sense that the value of the primal problem \eqref{eq: finiteness} can only be larger than that of the refined dual \eqref{eq: dual easy}.
\end{lemma}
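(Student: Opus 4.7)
The plan is to establish the bound pointwise via the Fenchel--Young inequality, take $\pi$-expectation, and use causality to kill the arising stochastic integral against $\omega$. First I would fix any $\pi\in\Pi^{\F,\G}(\gamma,\nu)$ with finite primal cost (otherwise the inequality is trivial) and any $F\in S_a(\G)$, and show that the refined-dual integrand evaluated at $F$ is bounded above in $\pi$-expectation by the primal integrand; the conclusion then follows by taking $\sup_F$ on one side and $\inf_\pi$ on the other.

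Since $\rho\ge 0$ and $\rho(+\infty)=+\infty$, finite primal cost forces $t\mapsto \sw_t-\omega_t$ to be absolutely continuous for $\pi$-a.e.\ $(\omega,\sw)$, with derivative $\abs{\sw_t-\omega_t}$. As $F_t(\sw)=\sum_{i=1}^m F^i(\sw)\IND_{(\tau_i,\tau_{i+1}]}(t)$ is simple, $\int_0^T F_t(\sw)d\sw_t$ and $\int_0^T F_t(\sw)d\omega_t$ are ordinary finite sums, and absolute continuity of $\sw-\omega$ yields the pathwise identity $\int_0^T F_t(\sw)\,d(\sw_t-\omega_t)=\int_0^T F_t(\sw)\abs{\sw_t-\omega_t}\,dt$. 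Applying Fenchel--Young pointwise, $F_t(\sw)\abs{\sw_t-\omega_t}\leq \rho(\abs{\sw_t-\omega_t})+\rho^*(F_t(\sw))$, and rearranging gives
\begin{equation*}
\int_0^T F_t(\sw)d\sw_t-\int_0^T \rho^*(F_t(\sw))dt \;\leq\; \int_0^T \rho(\abs{\sw_t-\omega_t})dt + \int_0^T F_t(\sw)d\omega_t.
\end{equation*}

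Taking $\pi$-expectation and using that $\pi$ has marginals $\gamma,\nu$, the proof then reduces to showing
\begin{equation*}
\EE^\pi\Bigl[\sum_{i=1}^m F^i(\sw)(\omega_{\tau_{i+1}}-\omega_{\tau_i})\Bigr]=0.
\end{equation*}
This is the main obstacle and the only place where causality is genuinely used. By Lemma~\ref{lem: causal wiener equivalence}, the first coordinate $\omega$ is, under $\pi$, a Brownian motion in the product filtration $\mathbb{K}:=(\F\otimes\{\emptyset,\C\})\vee(\{\emptyset,\C\}\otimes\G)$. The $\G$-stopping times $\tau_i$ are bounded by $T$ and lift trivially to $\mathbb{K}$-stopping times, and each $F^i$ is $\G_{\tau_i}$-measurable, hence $\mathbb{K}_{\tau_i}$-measurable. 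Optional sampling applied to the bounded martingale $\omega$ then gives $\EE^\pi[\omega_{\tau_{i+1}}-\omega_{\tau_i}\mid \mathbb{K}_{\tau_i}]=0$; multiplying by the bounded $\mathbb{K}_{\tau_i}$-measurable $F^i(\sw)$ and taking expectation yields the desired vanishing, term by term.

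Taking $\sup_{F\in S_a(\G)}$ on the left and $\inf_{\pi\in\Pi^{\F,\G}(\gamma,\nu)}$ on the right of the resulting inequality yields the stated weak-refined duality. Apart from the causality-driven vanishing of the $\omega$-integral, every step is a routine application of Fenchel--Young and optional sampling; the role of causality is to replace a $\gamma$-Brownian property of $\omega$ with a Brownian property in the enlarged product filtration, which is exactly what makes the ``test functional'' $\int F\,d\omega$ disappear in expectation.
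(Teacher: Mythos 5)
Your proof is correct and takes essentially the same route as the paper's: a pathwise Fenchel--Young inequality, followed by the observation that causality forces $\EE^\pi\bigl[\int_0^T F_t(\sw)\,d\omega_t\bigr]=0$ — a step the paper asserts in one line ("by causality $\EE^\pi[h]=0$") and which you justify in detail via Lemma~\ref{lem: causal wiener equivalence} and optional sampling at the bounded stopping times $\tau_i\le T$. The only slip is calling $\omega$ a ``bounded martingale''; it is not bounded, but since the $\tau_i$ are bounded by $T$, optional sampling applies regardless, so nothing is lost.
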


\begin{proof}
W.l.o.g.\ we assume that \eqref{eq: finiteness} is finite.  Let $\psi(\sw) = \int_0^T {F}_t(\sw)d\sw_t - \int_0^T \rho^*({F}_t(\sw))dt$ and $h(\omega,\sw)= \int_0^T {F}_t(\sw)d\omega_t$, with $F\in S_a(\G)$. Taking any $\pi\in\Pi^{\F,\G}(\gamma,\nu)$ with finite cost, and denoting $\abs{\sw_t-\omega_t}=\alpha_t$, we have
\begin{equation*}\textstyle
\psi(\sw) \leq   \int_0^T\rho(\alpha_t)dt + h(\omega,\sw)  =\int_0^T\rho(\abs{\sw_t-\omega_t})dt + h(\omega,\sw) \,\,\,\, \pi\text{-a.s},
\label{eq FY}
\end{equation*} 
by Fenchel-Young inequality. Since by causality $\EE^\pi[h]=0$, we conclude that the value of the primal problem is not any less than \eqref{eq: dual easy}. 
\end{proof}
 We now define a stochastic integral; see Appendix~\ref{Sec orlicz} for terminology and notation on Orlicz spaces, such as ``N- and Young-functions'', the ``$\Delta_2$ condition'' and so forth.

\begin{lemma}\label{lem orlicz}
Suppose that $\rho^*$ is a Young function having a global minimum at the origin. Then 
\begin{align}\textstyle
\overline{S_a(\G)}^{\|\cdot\|_{\rho^*}}&=M^{\rho^*} \label{eq orlicz}.
\end{align}
If further the refined dual \eqref{eq: dual easy} is finite, then the functional $$\textstyle F=\sum F^i\IND_{(\tau_i,\tau_{i+1}]}\in S_a(\G) \mapsto \int\int_0^T F_td\sw_t\,\nu(d\sw):= \int\sum F^i(\sw)[\sw_{\tau_{i+1}}- \sw_{\tau_{i}}] \nu(d\sw)$$
can be uniquely extended to $M^{\rho^*}$ by continuity. Using the same notation for its extension, we can replace the optimization variables in \eqref{eq: dual easy} by taking ``$F\in M^{\rho^*}$'' without changing the value of the optimization problem.
\end{lemma}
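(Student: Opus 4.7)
My plan is to treat the density claim \eqref{eq orlicz} and the extension/equality of values separately. Throughout I will lean on the fact that in the Orlicz heart $M^{\rho^*}$ the gauge norm is equivalent to ``mean convergence in every scale'': $\|G_n\|_{\rho^*}\to 0$ if and only if $\int\int_0^T\rho^*(\lambda G_n)\,dt\,\nu(d\sw)\to 0$ for every $\lambda>0$.

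For density, I would first observe that $S_a(\G)\subseteq M^{\rho^*}$ (each simple previsible $F$ is bounded, so $\rho^*(kF)$ is $dt\otimes\nu$-integrable on $[0,T]\times\C$ for every $k>0$) and that $M^{\rho^*}$ is closed in the gauge norm, giving the easy inclusion. For the reverse inclusion I would approximate an arbitrary $F\in M^{\rho^*}$ in two steps. First, the truncations $F^N:=F\,\IND_{|F|\le N}$ are bounded and $\G$-previsible, and dominated convergence with dominant $\rho^*(\lambda F)\in L^1(\nu\otimes dt)$ yields $\int\int_0^T \rho^*(\lambda(F-F^N))\,dt\,\nu(d\sw)\to 0$ for every $\lambda>0$, so $\|F-F^N\|_{\rho^*}\to 0$. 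Second, a standard monotone-class argument shows that the linear span of indicators of stochastic intervals $(\tau,\tau']$ times $B_b(\G_\tau)$-functions generates the $\G$-previsible $\sigma$-field; combined with uniform boundedness and dominated convergence, this yields approximation of bounded $\G$-previsible processes by elements of $S_a(\G)$ in the gauge norm.

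For the second part, let $V<\infty$ denote the value of the refined dual \eqref{eq: dual easy} and set $R(F):=\int\int_0^T\rho^*(F_t)\,dt\,\nu(d\sw)$. By definition of the supremum, $L(F):=\int\int_0^T F_t\,d\sw_t\,\nu(d\sw)\le V+R(F)$ for every $F\in S_a(\G)$; rescaling by $\beta:=\|F\|_{\rho^*}$, using $R(F/\beta)\le 1$ from the definition of the gauge norm, and applying the same inequality to $-F\in S_a(\G)$, one obtains $|L(F)|\le(V+1)\|F\|_{\rho^*}$. Thus $L$ is a bounded linear functional on the dense subspace $S_a(\G)\subseteq M^{\rho^*}$, and extends uniquely to a continuous linear functional on $M^{\rho^*}$. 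To conclude that enlarging the feasible set from $S_a(\G)$ to $M^{\rho^*}$ leaves the value of the refined dual unchanged (the nontrivial direction being ``$\le$''), I would establish continuity of $R$ on $M^{\rho^*}$: given $F_n\to F$ in gauge norm, Fatou along an a.e.\ convergent subsequence yields $R(F)\le\liminf R(F_n)$; for the matching upper bound I would use, for each $\epsilon\in(0,1)$, the convex decomposition $F_n=(1-\epsilon)\cdot F/(1-\epsilon)+\epsilon\cdot (F_n-F)/\epsilon$, integrate $\rho^*$, let $n\to\infty$ using $R(G)\le\|G\|_{\rho^*}$ whenever $\|G\|_{\rho^*}\le 1$, and finally let $\epsilon\to 0$ by dominated convergence with dominant $\rho^*(F/(1-\epsilon_0))\in L^1$ for a fixed small $\epsilon_0$. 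The main obstacle is precisely this continuity of the convex Orlicz functional $R$ on $M^{\rho^*}$; the convexity trick above handles it without invoking any $\Delta_2$ assumption on $\rho^*$.
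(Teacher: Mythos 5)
Your proposal is correct and follows essentially the same route as the paper: the bound $\bigl|\int\int_0^T F_t\,d\sw_t\,\nu(d\sw)\bigr|\leq (v+1)\|F\|_{\rho^*}$ obtained by rescaling against the finite dual value, the unique continuous extension to $M^{\rho^*}$, and the continuity of the modular $F\mapsto\int\int_0^T\rho^*(F_t)\,dt\,\nu(d\sw)$ on the Orlicz heart (which needs no $\Delta_2$ condition, exactly as in the paper) are the same three ingredients. The only differences are presentational: you prove the density statement \eqref{eq orlicz} by hand via truncation plus a functional monotone class argument, where the paper cites \cite[Ch. III.3.4, Proposition 3]{RaoRen} applied to the algebra generated by the sets $D\times(\underline\tau,\overline\tau]$, and you establish continuity of the modular by a direct convex-decomposition estimate rather than by the paper's ``locally bounded convex functional is continuous'' argument.
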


\begin{proof}
By \cite[Ch. III.3.4, Proposition 3]{RaoRen}, we have \eqref{eq orlicz} under the given hypotheses; the measure considered being $d\nu\times dt$, the sigma-algebra being the $\G$-previsible one, and the Young function being $\rho^*$. One need only observe that the previsible sigma-algebra is generated by the algebra of sets, whose elements are finite disjoint unions of ``base'' sets of the form $D\times (\underline{\tau},\overline{\tau}]$, with $D\in \G_{\underline{\tau}}$ and $\G$-stopping times $\underline{\tau}\leq\overline{\tau}$. If we denote by $v$ the value of problem \eqref{eq: dual easy}, from now on assumed finite, we then have for all $F\in S_a(\G),\beta >0$:
\begin{equation*}\textstyle
\int\int_0^T (F_t/\beta)d\sw_t\,\nu(d\sw)\leq v + \int \int_0^T \rho^*(F_t(\sw)/\beta)dt\,\nu(d\sw),  \label{eq cont bound}
\end{equation*} 
so by definition 
\begin{equation}\label{eq cont bound 2}\textstyle
\int\int_0^T F_t\, d\sw_t\,\nu(d\sw)\leq (v+1)\|F\|_{\rho^*}.
\end{equation}
This shows that the discrete integral, seen as a continuous linear functional on $S_a(\G)$, can be uniquely and continuously extended to the norm closure of this space, which we know to coincide with $M^{\rho^*}$. Because the convex functional { $F\mapsto  \int\int_0^T\rho^*( F_t)\, dt \,\nu(d\sw)$} is finite throughout $M^{\rho^*}$, for all $F, \bar{F}\in  M^{\rho^*}$ with $\|F-\bar{F}\|_{\rho^*}\leq 1/2$ we have that
$$\textstyle \int\int_0^T\rho^*( \bar{F}_t)\, dt \,\nu(d\sw) \leq 1/2 \int\int_0^T\rho^*( 2 F_t)\, dt \,\nu(d\sw) + 1/2\int\int_0^T\rho^*\left(\frac{ F_t-\bar{F}_t}{1/2}\right)\, dt \,\nu(d\sw)\leq c_F+1,$$
where $c_F$ is a constant only depending on $F$.
This shows that the convex functional is locally bounded and thus continuous by classical results; the last statement then follows.
\end{proof}

\begin{lemma}\label{lem attain}
Assume that $\rho^*$ (equiv.\ $\rho$) is an N-function, that $\rho$ is in $\Delta_2$, and that the refined dual \eqref{eq: dual easy} is finite. Then \eqref{eq: dual easy} is attained in $M^{\rho^*}$.
\end{lemma}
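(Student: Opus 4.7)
The plan is to exhibit an explicit maximizer via convex duality applied to the Riesz representer of the stochastic integral.

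By Lemma~\ref{lem orlicz}, the map $L(F):=\int\int_0^T F_t(\sw)\,d\sw_t\,\nu(d\sw)$ extends to a continuous linear functional on $M^{\rho^*}$. Since $\rho^*$ is an N-function, the duality $(M^{\rho^*})^*=L^{\rho}$ holds, and the hypothesis $\rho\in\Delta_2$ yields $L^{\rho}=M^{\rho}$. Consequently, $L$ is represented by some $G\in M^{\rho}$: one has $L(F)=\int\int_0^T F_t(\sw)G_t(\sw)\,dt\,\nu(d\sw)$ for every $F\in M^{\rho^*}$, and in particular $\mathcal{R}:=\int\int_0^T \rho(G_t(\sw))\,dt\,\nu(d\sw)<\infty$.

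Next, the pointwise Fenchel--Young inequality $xy\leq\rho^*(x)+\rho(y)$, applied under the integral sign, gives the a priori bound $L(F)-\int\int_0^T\rho^*(F_t)\,dt\,d\nu\leq\mathcal{R}$ for every $F\in M^{\rho^*}$, with equality iff $F_t(\sw)\in\partial\rho(G_t(\sw))$ a.e. I therefore set $\hat F_t(\sw):=\rho'_+(G_t(\sw))$, the right derivative, which is a Borel-measurable selection from $\partial\rho(G_t(\sw))$ since $\rho$ is convex. Pointwise Fenchel equality $\rho^*(\hat F_t)+\rho(G_t)=\hat F_t\,G_t$ then holds $\nu\otimes dt$-a.e., and once it is established that $\hat F\in M^{\rho^*}$, the a priori upper bound becomes an equality: $L(\hat F)-\int\int\rho^*(\hat F_t)\,dt\,d\nu=\mathcal{R}$, so $\hat F$ attains the supremum in \eqref{eq: dual easy}.

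The main obstacle, and the point where the $\Delta_2$-hypothesis is essential, is verifying $\hat F\in M^{\rho^*}$, i.e.\ that $\int\int_0^T\rho^*(k\hat F_t)\,dt\,d\nu<\infty$ for every $k>0$. For $k=1$ the quantitative characterisation \eqref{eq quantitative Delta 2} supplies $\epsilon>1$ and $x_0>0$ with $x\rho'_+(x)\leq\epsilon\rho(x)$ on $[x_0,\infty)$; Fenchel equality then gives $\rho^*(\hat F_t)\leq(\epsilon-1)\rho(G_t)$ on $\{G_t\geq x_0\}$, with a uniformly bounded contribution from the complementary set, so this integrates to a finite value against the finite measure $\nu\otimes dt$. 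For general $k>0$ one writes $k\rho'_+(G_t)=\rho'_+(H_t)$ with $H_t:=(\rho'_+)^{-1}(k\rho'_+(G_t))$ (well-defined by the monotonicity of $\rho'_+$ and the N-function property of $\rho$), applies the same Fenchel computation at $H_t$ to get $\rho^*(k\hat F_t)\leq(\epsilon-1)\rho(H_t)+\text{const}$, and then controls $\int\int\rho(H_t)\,dt\,d\nu$ by the $\Delta_2$-iteration $\rho(\lambda x)\leq\lambda^{\log_2 C}\rho(x)$ together with the fact that $G\in M^{\rho}$ guarantees $\int\int\rho(\lambda G)<\infty$ for every $\lambda>0$. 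Once this integrability is secured, the previous paragraph produces the maximizer $\hat F\in M^{\rho^*}$, concluding the proof.
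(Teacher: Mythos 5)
Your construction of an explicit maximizer via the Riesz representer $G\in (M^{\rho^*})^*=L^{\rho}=M^{\rho}$ and pointwise Fenchel--Young equality is an attractive alternative to the paper's argument (which never identifies the optimizer: it proves coercivity of $F\mapsto\int\int_0^T\rho^*(F_t)\,dt\,d\nu$ in the gauge norm from $\rho\in\Delta_2$, restricts to a ball that is $\sigma(M^{\rho^*},M^{\rho})$-compact by Banach--Alaoglu, and invokes weak$^*$ upper semicontinuity of the concave, norm-continuous objective). However, your proof has a genuine gap exactly at the step you flag as the main obstacle: the verification that $\hat F=\rho'_+(G)\in M^{\rho^*}$ for \emph{every} dilation $k>0$. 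Your $k=1$ estimate, via \eqref{eq quantitative Delta 2} and Fenchel equality, correctly yields $\rho^*(\hat F_t)\le(\epsilon-1)\rho(G_t)+\mathrm{const}$, hence $\hat F\in L^{\rho^*}$. But for $k>1$ you need to dominate $H_t=(\rho'_+)^{-1}\bigl(k\rho'_+(G_t)\bigr)$ by a constant multiple $\lambda(k)\,G_t$ in order to apply the $\Delta_2$-iteration to $\rho(H_t)$, and this requires a \emph{lower} growth bound of the form $\rho'_+(\lambda x)\ge k\rho'_+(x)$ for large $x$ --- a condition equivalent to $\rho^*\in\Delta_2$ (the $\nabla_2$ condition on $\rho$), which is \emph{not} among the hypotheses of the lemma, where only $\rho\in\Delta_2$ is assumed.

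To see that the step really fails, take $\rho(x)=(1+|x|)\log(1+|x|)-|x|$, so $\rho^*(y)=e^{|y|}-|y|-1$; then $\rho\in\Delta_2$ but $\rho^*\notin\Delta_2$. Here $\rho'_+(x)=\log(1+x)$ for $x\ge0$, so $\hat F_t\approx\log(1+|G_t|)$ and $\rho^*(k\hat F_t)\approx(1+|G_t|)^k$, while $H_t\approx(1+|G_t|)^k$ as well: $H_t$ is not dominated by any constant multiple of $G_t$, and $\int\int\rho^*(k\hat F_t)\,dt\,d\nu<\infty$ would require $G\in L^k$ for every $k$, which is far stronger than $G\in M^{\rho}$ (integrability of $|G|\log|G|$, up to constants). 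So under the stated hypotheses your candidate $\hat F$ need only lie in $L^{\rho^*}$, where the extended functional of Lemma~\ref{lem orlicz} is not even defined, and the conclusion ``attained in $M^{\rho^*}$'' does not follow from your argument. The paper's direct method sidesteps this entirely because it never needs to exhibit $\hat F$ in closed form. Your approach does become a clean and genuinely different proof under the full hypotheses of Theorem~\ref{thm orlicz}, where the additional condition $\rho(x)\le\rho(\ell x)/(2\ell)$ forces $\rho^*\in\Delta_2$, hence $M^{\rho^*}=L^{\rho^*}$ and your $k=1$ estimate already suffices; as a bonus it identifies the optimizer and anticipates the relation $\alpha=(\rho^*)'(\hat F)$ of Lemma~\ref{lem many}. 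But as a proof of Lemma~\ref{lem attain} as stated, the general-$k$ integrability argument is incorrect.
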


\begin{proof}
By Lemma \ref{lem orlicz}, we can consider \eqref{eq: dual easy} as defined over $M^{\rho^*}$. By  \cite[Ch. IV.4.1, Theorem 6]{RaoRen} we see that $L^{\rho^*}$ is the norm dual space of $M^\rho$, so the classical Banach-Alaoglu's theorem implies that closed balls in $M^{\rho^*}$ are $\sigma(M^{\rho^*},M^{\rho})$-compact, since $M^{\rho^*}$ is a norm-closed and convex subset of $L^{\rho^*}$. By \cite[Ch. V.5.3, Theorem 3]{RaoRen} and the comment following its proof, we see that $\rho$ in $\Delta_2$ implies that 
\begin{equation}\label{eq lim Frho}\textstyle
\lim\limits_{\|F\|_ {\rho^*}\to \infty}\frac{\int\int_0^T\rho^*(F_t)dt\,\nu(d\sw)}{\|F\|_ {\rho^*}}=+\infty.
\end{equation}
As a consequence of this and \eqref{eq cont bound 2} (which holds also in $M^{\rho^*}$), we get
$$\textstyle \int [\int_0^T F_td\sw_t - \int_0^T \rho^*(F_t)dt ] \nu(d\sw) \leq \|F\|_ {\rho^*}\big\{ v+1 - \frac{\int\int_0^T\rho^*(F_t)dt\,\nu(d\sw)}{\|F\|_ {\rho^*}} \big\},$$
where $v$ is the value of \eqref{eq: dual easy}, and by \eqref{eq lim Frho} the r.h.s.\ above goes to $-\infty$ as $\|F\|_ {\rho^*}\to \infty$. This shows that in computing \eqref{eq: dual easy} one may restrict the problem to a big enough fixed ball in $M^{\rho^*}$, which is $\sigma(M^{\rho^*},M^{\rho})$-compact. As in the end of the proof of Lemma~\ref{lem orlicz}, we observe that the objective function is norm-continuous, and because it is concave it is also $\sigma(M^{\rho^*},M^{\rho})$-upper semicontinuous. The existence of an optimizer in $M^{\rho^*}$ follows.
\end{proof}

\begin{lemma}\label{lem many}
Suppose \eqref{eq: dual easy} is finite and attained by some $\hat{F}\in M^{\rho^*}$, and that $\rho^*$ is a differentiable N-function (equiv.\ $\rho$ is a strictly convex N-function) which is in $\Delta_2$. Setting $$\textstyle\alpha_t(\sw):= (\rho^*)'(\hat{F}_t(\sw))\,\mbox{ and }\,\,\xi_t(\sw):=\sw_t-\int_0^t \alpha_t(\sw)dt,$$ we have that $\int\int_0^T \rho(\alpha_t(\sw)) dt\,\nu(d\sw)< +\infty$, and so $\alpha\in L^\rho$ and $\xi$ is a $(\nu,\G)$-martingale. 

If further $\nu\ll\gamma$, then $\xi$ is a $(\nu,\G)$-Brownian motion, $(\xi,id)_\#\nu\in \Pi^{\F,\G}(\gamma,\nu)$, the primal problem \eqref{eq: finiteness} is finite and its value is equal to \eqref{eq dual wiener} and \eqref{eq: dual easy}. 
\end{lemma}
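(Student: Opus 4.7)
The plan is to exploit the optimality of $\hat F$ in \eqref{eq: dual easy} to derive a first-order (Euler--Lagrange) identity, from which the martingale property of $\xi$ and the duality equalities will follow; the integrability $\alpha\in L^\rho$ is obtained first as a direct consequence of the $\Delta_2$ hypothesis. Throughout I write $J(F)=\int[\int_0^T F_t\,d\sw_t-\int_0^T \rho^*(F_t)dt]\nu(d\sw)$ for the refined-dual objective extended to $M^{\rho^*}$ via Lemma~\ref{lem orlicz}.

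First I would prove $\alpha\in L^\rho$ directly: the quantitative $\Delta_2$ estimate \eqref{eq quantitative Delta 2} applied to $\rho^*$ gives $|x(\rho^*)'(x)|\leq C\rho^*(x)+C'$, hence $|\alpha\hat F|\leq C\rho^*(\hat F)+C'\in L^1(d\nu\times dt)$; the pointwise Fenchel--Young equality $\alpha\hat F=\rho^*(\hat F)+\rho(\alpha)$ (which holds because $\alpha=(\rho^*)'(\hat F)$) then yields $\int\int_0^T\rho(\alpha_t)dt\,\nu(d\sw)\leq \int\int_0^T\alpha_t\hat F_t\,dt\,\nu(d\sw)<+\infty$.

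Next I would establish the Euler--Lagrange identity: for every $G\in M^{\rho^*}$ the map $\epsilon\mapsto J(\hat F+\epsilon G)$ is Gateaux differentiable at $\epsilon=0$, with derivative $\int\int_0^T G_t\,d\sw_t\,\nu(d\sw)-\int\int_0^T \alpha_t G_t\,dt\,\nu(d\sw)$. The linear part uses the continuity of the extended stochastic integral from Lemma~\ref{lem orlicz}; the nonlinear part follows from dominated convergence, with dominant arising from convexity combined with the $\Delta_2$ bound $|\rho^*(\hat F+\epsilon G)-\rho^*(\hat F)|/|\epsilon|\leq \max(|\alpha G|,\,C\rho^*(\hat F)+C\rho^*(G))\in L^1$ for $|\epsilon|\leq 1$ (note $\alpha G\in L^1$ by H\"older's inequality for Orlicz spaces, using $\alpha\in L^\rho$ from the previous step and $G\in L^{\rho^*}$). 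Optimality of $\hat F$ forces this derivative to vanish, producing
$$\textstyle (\star)\quad \int\int_0^T G_t\, d\sw_t\,\nu(d\sw) \;=\; \int\int_0^T \alpha_t G_t\, dt\,\nu(d\sw)\qquad\forall\, G\in M^{\rho^*}.$$
Testing $(\star)$ with $G=f\,\IND_{(s,t]}\in S_a(\G)$ for $f\in B_b(\C,\G_s)$ gives $\EE^\nu[f(\xi_t-\xi_s)]=0$, so $\xi$ is a continuous $(\nu,\G)$-martingale, continuity coming from the absolute continuity of $A_t:=\int_0^t\alpha_u du$.

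Under the additional assumption $\nu\ll\gamma$, the pathwise quadratic variation of $\sw$ equals $t$ on a $\gamma$-full, hence $\nu$-full, set; since $A$ has finite variation, $\langle\xi\rangle_t=[\sw,\sw]_t=t$ $\nu$-a.s., so L\'evy's theorem promotes $\xi$ to a $(\nu,\G)$-Brownian motion. Since $\alpha$ is $\G$-predictable, $\xi$ is $\G$-adapted and $\F^\xi\vee\G=\G$, so Lemma~\ref{lem: causal wiener equivalence}(3) gives $(\xi,id)_\#\nu\in\Pi^{\F,\G}(\gamma,\nu)$. The causal plan $\hat\pi:=(\xi,id)_\#\nu$ has primal cost $\EE^\nu[\int_0^T\rho(\alpha_t)dt]$; testing $(\star)$ with $G=\hat F$ and substituting into the definition of $J(\hat F)$, together with the pointwise Fenchel--Young equality, shows this cost equals $J(\hat F)$. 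Hence the primal value in \eqref{eq: finiteness} is at most that of \eqref{eq: dual easy}, and the weak-duality bound of Lemma~\ref{lem FY} supplies the reverse inequality, so the two values coincide. Equality with the Kantorovich dual \eqref{eq dual wiener} finally follows from Theorem~\ref{prop duality}, whose hypotheses hold since the cost is lower semicontinuous by Lemma~\ref{lem aux Sobolev} and $\gamma$ satisfies the weak-continuity property \eqref{eq:weak_cont}. The main obstacle is the rigorous justification of $(\star)$ on the Orlicz heart $M^{\rho^*}$; the $\Delta_2$-hypothesis on $\rho^*$ is precisely what supplies the integrable dominants required for the Gateaux derivative computation.
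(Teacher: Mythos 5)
Your proposal is correct and follows essentially the same route as the paper's proof: integrability of $\rho(\alpha)$ via the Fenchel--Young equality together with the quantitative $\Delta_2$ bound \eqref{eq quantitative Delta 2}, the vanishing Gateaux derivative of the concave refined-dual objective yielding $\int\int_0^T h_t\,d\xi_t\,\nu(d\sw)=0$ for all $h\in M^{\rho^*}$ and hence the martingale property, L\'evy/Girsanov under $\nu\ll\gamma$, causality via Lemma~\ref{lem: causal wiener equivalence}, and the value identification by testing with $\hat F$ combined with Lemma~\ref{lem FY} and Theorem~\ref{prop duality}. The only difference is that you spell out the dominated-convergence justification of the Euler--Lagrange identity in more detail than the paper does, which is a welcome but not substantively different addition.
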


\begin{proof}
We first observe that by the identity of sub-differentials
\begin{align}\textstyle \label{eq integ alpha}\int\int_0^T \rho\circ (\rho^*)' (\hat{F}_t(\sw)) dt\,\nu(d\sw)= - \int\int_0^T  \rho^* (\hat{F}_t(\sw)) dt\,\nu(d\sw) + \int\int_0^T\hat{F}_t(\sw) (\rho^*)' (\hat{F}_t(\sw)) dt\,\nu(d\sw),
\end{align}
so the finiteness of the l.h.s.\ is equivalent to the finiteness of the second term in the r.h.s. Since $\rho^*$ is an N-function in $\Delta_2$ we have by \eqref{eq quantitative Delta 2} that when $\hat{F}_t(\sw)$ is large the integrand $\hat{F}_t(\sw) (\rho^*)' (\hat{F}_t(\sw))$ is dominated by a (fixed) constant times $\rho^*(\hat{F}_t(\sw))$, and so we conclude that the left- and right-hand sides above are indeed finite. In particular $\alpha\in L^\rho$ holds, and $h\in M^{\rho^*}\mapsto \int \int_0^T \alpha_t(\sw)h_t(\sw) dt \nu(d\sw)$ is finite-valued and continuous by \cite[Ch. III.3.3, Proposition 1]{RaoRen}.

	Since \eqref{eq: dual easy} is a concave problem, we have that if $\hat{\zeta}$ is optimizer of $\sup\limits_{\zeta \in M^{\rho^*}} H(\zeta)$, then  $\forall h\in M^{\rho^*}$ it holds that $\frac{\partial}{\partial \varepsilon} H(\hat{\zeta} + \varepsilon h) \vert_{\varepsilon = 0} = 0$, where
	$$\textstyle H(\zeta):= [\int\int_0^T \zeta_t(\sw)\,\, d\sw_t \nu(d\sw) - \int \int_0^T \rho^*(\zeta_t(\sw)) dt \nu(d\sw)].$$  
	Thus we get that $$\textstyle\int\int_0^T h_t(\sw) d\sw_t \nu(d\sw) - \int \int_0^T \alpha_t(\sw)h_t(\sw) dt \nu(d\sw)=0\quad \forall h\in M^{\rho^*}.$$
This means that $\int \int_0^T h_t(\sw)d\xi_t\nu(d\sw)=0$ for all such $h$, which implies that $\xi$ is indeed a $(\nu,\G)$-martingale. Since the bracket of the canonical process is the identity under $\gamma$, this is inherited by $\xi$ by Girsanov theorem under the assumption $\nu\ll\gamma$, so by Levy's theorem $\xi$ is then a $(\nu,\G)$-Brownian motion. By Lemma~\ref{lem: causal wiener equivalence}, $(\xi,id)_*\nu$ is causal. In light of the finiteness in \eqref{eq integ alpha}, this proves that the primal problem \eqref{eq: finiteness} is finite. 
The lower semicontinuity of $(\omega,\sw)\mapsto \int_0^T\rho(\abs{\sw_t-\omega_t})dt$ was established in Lemma~\ref{lem aux Sobolev}, and so by Theorem~\ref{prop duality} we get that there is no duality gap. To conclude the proof, we only need to check the equality between \eqref{eq: finiteness} and the refined dual. For this, we rewrite \eqref{eq integ alpha} as
	$$\textstyle
	\int\int_0^T \rho(\alpha_t(\sw)) dt\,\nu(d\sw)= - \int\int_0^T  \rho^* (\hat{\zeta}_t(\sw)) dt\,\nu(d\sw) + \int\int_0^T\hat{\zeta}_t(\sw) d\sw_t\,\nu(d\sw),$$
	where we used that $\int\int_0^T \hat{\zeta}_t(\sw)d\xi_t\nu(d\sw)=0$. This proves that the refined dual has a greater value than the primal, and we conclude by Lemma~\ref{lem FY}.
\end{proof}

We can finally give the proof of Theorem~\ref{thm orlicz}:

\begin{proof}[Proof of Theorem \ref{thm orlicz}]
Under the assumptions made, both $\rho$ and $\rho^*$ are N-functions in $\Delta_2$. For Point $(i)$, and thanks to Lemma~\ref{lem FY}, we only need to prove that when the refined dual is finite, its value coincides with the primal one. This follows by Lemmata~\ref{lem attain} and \ref{lem many}. Point $(ii)$ is contained in Lemma~\ref{lem orlicz}. Point $(iii)$ follows from the latter lemma, and because the strict convexity of $\rho$ implies the differentiability of $\rho^*$.  Finally Point $(iv)$ is given by Lemma~\ref{lem many} together with an application of Girsanov theorem.
\end{proof}

We stress that Point $(iv)$ of Theorem~\ref{thm orlicz} can also be obtained via more sophisticated stochastic analysis arguments: if the primal problem (equiv.\ the refined dual) is finite, then as in the proof of Lemma~\ref{lem orlicz} one shows that for all $\textstyle F\in S_a(\G):\, \int\int_0^T F_t\, d\sw_t\,\nu(d\sw)\leq (v+1)\|F\|_{\rho^*}$, 
where $v$ is the common optimal value. If $|F|\leq C$ then $\|F\|_{\rho^*}\leq C\|1\|_{\rho^*}<\infty$. This suggests that $ \textstyle\{\int_0^T F_t\, d\sw_t: \,F\in S_a(\G),\, \|F\|_{\infty}\leq 1\} $
is bounded in $\nu$-probability, so by the Bichteler-Dellacherie theorem $\sw$ is a $(\nu,\G)$-semimartingale, and we conclude by Girsanov theorem. Such a proof is reminiscent of original arguments in \cite{Jac85}.

\section{Projections of processes}\label{app proj}
We recall the notions of (dual) optional and predictable projections, which are used throughout the article, and refer to \cite[Ch.\ VI]{DellacherieMeyerB} for an accurate study of the subject.

Let $X$ be a positive or bounded measurable process on a { filtered probability space} $(\Omega,\H,\PROB)$.
The \emph{optional projection} of $X$ is the unique (up to indistinguishability) optional process $Y$ such that
\[
\EE\left[X_\tau\IND_{\{\tau<\infty\}}|\H_\tau\right]=Y_\tau\IND_{\{\tau<\infty\}}\;\, \textrm{a.s.}
\]
for every stopping time $\tau$.
The \emph{predictable projection} of $X$ is the unique (up to indistinguishability) predictable process $N$ such that
\[
\EE\left[X_\tau\IND_{\{\tau<\infty\}}|\H_{\tau-}\right]=N_\tau\IND_{\{\tau<\infty\}}\;\, \textrm{a.s.}
\]
for every predictable stopping time $\tau$. These notions of projection can be given for a broader class of processes, including those of integrable variation; see \cite[Remark~VI.44-(f)]{DellacherieMeyerB}.

Now, let $H$ be a raw process of integrable variation on $(\Omega,\H,\PROB)$.
The \emph{dual optional (resp.\ predictable) projection} of $H$ is the optional (resp.\ predictable) integrable variation process $U$ defined by
\[\textstyle
\EE\left[\int_0^{\infty}X_tdU_t\right]=
\EE\left[\int_0^{\infty}X_tdH_t\right]
\]
for any bounded optional (resp.\ predictable) process $X$. W.l.o.g.\ we assume $U_0=0$.

The following lemma is fundamental for the proof of Theorem~\ref{thm: gWnu_gen} and  Proposition~\ref{prop:OS}. It follows directly from \cite[Lemma~7, Appendix~I]{DellacherieMeyerB}{, and holds for any two filtrations $\F$ and $\G$  on $\C$}.
\begin{lemma}
\label{lem proj}
Let $\pi\in \Pi(\mu, \nu)$ be a (non-necessarily causal) transport plan, and let $\Lambda$ be a $(\mathcal B([0,T])\otimes \F_T\otimes \G_T)$-measurable process on $\C\times\C$ of integrable variation. Then:
\begin{enumerate}
\item 
The optional projection of $\Lambda$ with respect to $(\pi,\{\F\otimes \{\emptyset,\C\}\}^{\pi})$ (resp. $(\pi,\{\{\emptyset,\C\}\otimes \G\}^{\pi})$), which we denote by $^o\Lambda^{\F}$ (resp. $^o\Lambda^{\G}$) is $\pi$-indistinguishable from an optional process with respect to $(\mu,\F^{\mu})$ (resp. $(\nu,\G^{\nu})$), so w.l.o.g.\ one may assume
\[
^o\Lambda^{\F}(\omega,\sw)=\, ^o\Lambda^{\F}(\omega),\qquad ^o\Lambda^{\G}(\omega,\sw)=\, ^o\Lambda^{\G}(\sw).
\]
The analogous statement holds for the predictable projections.
\item 
The dual optional projection of $\Lambda$ with respect to $(\pi,\{\F\otimes \{\emptyset,\C\}\}^{\pi})$ (resp. $(\pi,\{\{\emptyset,\C\}\otimes \G\}^{\pi})$), which we denote by\ $^{\F}\Lambda^o$ (resp. $^{\G}\Lambda^o$), is $\pi$-indistinguishable from an optional process with respect to $(\mu,\F^{\mu})$ (resp. $(\nu,\G^{\nu})$), so w.l.o.g.\ one may assume \[
^{\F}\Lambda^o(\omega,\sw)=\, ^{\F}\Lambda^o(\omega),\qquad ^{\G}\Lambda^o(\omega,\sw)=\, ^{\G}\Lambda^o(\sw).
\]
The analogous statement holds for the dual predictable projections.
\end{enumerate}
\end{lemma}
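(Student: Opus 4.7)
The plan is to combine a simple structural observation about the filtration $\F\otimes\{\emptyset,\C\}$ on $\C\times\C$ with the cited result \cite[Appendix~I, Lemma~7]{DellacherieMeyerB}, which asserts that processes optional (resp.\ predictable) with respect to the $\pi$-completion of a filtration are $\pi$-indistinguishable from processes optional (resp.\ predictable) with respect to the uncompleted filtration.

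First, I would establish the following structural fact: every $\sigma$-field $\F_t\otimes\{\emptyset,\C\}$ consists of cylinders of the form $A\times\C$ with $A\in\F_t$, so every $(\F\otimes\{\emptyset,\C\})$-stopping time depends only on the first coordinate and canonically identifies with an $\F$-stopping time on $\C$. Since the optional and predictable $\sigma$-algebras on $[0,T]\times\C\times\C$ are generated by stochastic intervals indexed by such stopping times (together, in the predictable case, with an initial $\F_0\otimes\{\emptyset,\C\}$-measurable contribution), one gets the tensor-product identities $\mathcal O(\F\otimes\{\emptyset,\C\})=\mathcal O(\F)\otimes\{\emptyset,\C\}$ and $\mathcal P(\F\otimes\{\emptyset,\C\})=\mathcal P(\F)\otimes\{\emptyset,\C\}$. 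Consequently, any process measurable with respect to these $\sigma$-algebras admits a representation $(t,\omega,\sw)\mapsto Y_t(\omega)$ with $Y$ respectively $\F$-optional or $\F$-predictable on $[0,T]\times\C$.

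Second, I would apply this to the four processes of interest. Both $^o\Lambda^{\F}$ and $^{\F}\Lambda^{o}$ are by definition optional with respect to the completion $\{\F\otimes\{\emptyset,\C\}\}^{\pi}$ (in the dual case, this is baked into the very definition of a dual optional projection recalled in Appendix~\ref{app proj}). The Dellacherie--Meyer lemma furnishes, for each of them, a $\pi$-indistinguishable version that is optional with respect to the uncompleted $\F\otimes\{\emptyset,\C\}$; the structural step then collapses this version to a function of $(t,\omega)$ alone. Since $\pi$ has first marginal $\mu$, any $\F$-optional process on $\C$ is in particular $\F^{\mu}$-optional under $\mu$, yielding the claimed form $^o\Lambda^{\F}(\omega,\sw)=\,^o\Lambda^{\F}(\omega)$ and similarly for $^{\F}\Lambda^{o}$. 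The predictable and dual predictable statements are identical upon replacing $\mathcal O$ by $\mathcal P$. The corresponding claims under the filtration $\{\emptyset,\C\}\otimes\G$ are obtained by exchanging the roles of the two coordinates, producing processes that depend only on $\sw$ and that are $(\nu,\G^{\nu})$-optional/predictable (resp.\ dual optional/dual predictable).

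The only step requiring any care is verifying the tensor-product identities for $\mathcal O$ and $\mathcal P$, i.e.\ showing that the trivial factor $\{\emptyset,\C\}$ is ``inactive'' at the level of stopping times; this is handled directly on generators. Everything else is a bookkeeping application of the Dellacherie--Meyer lemma together with the fact that the first (resp.\ second) marginal of $\pi$ equals $\mu$ (resp.\ $\nu$), so that indistinguishability under $\pi$ transfers to indistinguishability under the respective marginal.
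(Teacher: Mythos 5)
Your argument is correct and is essentially the paper's own proof: the paper disposes of this lemma by citing \cite[Lemma~7, Appendix~I]{DellacherieMeyerB} to pass from the $\pi$-completed to the raw filtration, with the remaining content being exactly your structural observation that $(\F\otimes\{\emptyset,\C\})$-stopping times, and hence the optional and predictable $\sigma$-fields, only see the first coordinate, so the resulting version factors through $(t,\omega)$ and is $\F^{\mu}$-optional (resp.\ predictable) under the first marginal $\mu$, with the symmetric argument for $\{\emptyset,\C\}\otimes\G$ and $\nu$. No gaps; your spelled-out tensor-product identification and the marginal-transfer of indistinguishability are precisely the bookkeeping the paper leaves implicit.
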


\bibliographystyle{amsalpha}
\bibliography{biblio_causal_enlargement}

\end{document}